\documentclass[11pt]{article}
\usepackage[margin=1in]{geometry}
\usepackage{amsmath,amssymb,amsthm,mathtools}
\usepackage{enumitem,microtype}
\usepackage{authblk}
\usepackage[colorlinks=true,linkcolor=blue,citecolor=blue,urlcolor=blue]{hyperref}

\newtheorem{theorem}{Theorem}[section]
\newtheorem{proposition}[theorem]{Proposition}
\newtheorem{lemma}[theorem]{Lemma}
\newtheorem{corollary}[theorem]{Corollary}
\theoremstyle{definition}
\newtheorem{definition}[theorem]{Definition}
\newtheorem{example}[theorem]{Example}
\theoremstyle{remark}
\newtheorem{remark}[theorem]{Remark}

\newcommand{\TP}{\mathrm{TP}}
\newcommand{\TN}{\mathrm{TN}}
\newcommand{\ETP}{\mathrm{ETP}}
\newcommand{\R}{\mathbb{R}}

\title{Eternal Total Positivity and Matrix-Dependent Hadamard Thresholds}
\author[1]{Shaun M. Fallat\thanks{Email:
\href{mailto:Shaun.Fallat@uregina.ca}{Shaun.Fallat@uregina.ca}.}}
\author[1,2]{Seyed Mahmoud Manjegani\thanks{Corresponding author. Emails:
\href{mailto:Seyed.Manjegani@uregina.ca}{Seyed.Manjegani@uregina.ca};
\href{mailto:manjgani@iut.ac.ir}{manjgani@iut.ac.ir}.}}
\author[2]{Mohammadrahim Saeidi Dahaki\thanks{Email:
\href{mailto:m.saeedi@alumni.iut.ac.ir}{m.saeedi@alumni.iut.ac.ir}.}}
\affil[1]{Department of Mathematics and Statistics, University of Regina,
3737 Wascana Parkway, Regina, Saskatchewan S4S 0A2, Canada}
\affil[2]{Department of Mathematical Sciences, Isfahan University of
Technology, Isfahan 84156-83111, Iran}
\date{}

\begin{document}
\maketitle

\begin{abstract}
We study entrywise powers of totally positive matrices.  Our first question is
which matrices remain totally positive under every positive Hadamard power.
We call them \emph{eternally totally positive}.  Our second question concerns
a fixed totally positive matrix: what is the smallest exponent after which all
of its entrywise powers are totally positive?  We call this number the
Hadamard threshold of the matrix.  Unlike the usual critical exponent, this
threshold depends on the individual matrix.  We give a complete
characterization in order three, obtain explicit bounds for the transition
exponent, and construct symmetric matrices that realize every threshold in
$[0,1)$.  We also prove the basic transformation rules and continuity of the
threshold on the normalized $3\times3$ totally positive cone.  For higher
orders, we introduce the logarithmic core and use it to expand each minor near
the zero exponent.  This reduces the $4\times4$ problem to the full determinant
and the $3\times3$ submatrices.  For symmetric $4\times4$ matrices, we show
that a positive-definite logarithmic core makes the full determinant positive
for every positive exponent.  A two-parameter Hankel family then realizes
every threshold in $(1,2)$.  Hence the exact range of thresholds in the
symmetric order-four case is $[0,1)\cup(1,2)$; neither $1$ nor $2$ is attained.
We also give a recursive symmetric ETP criterion in every order and an explicit
classification for symmetric Toeplitz matrices of order four. In order five,
a certified example exhibits an intermediate-power obstruction in a nonprincipal
minor. An explicit order-three condition and a finite critical-point test
give a complete order-five test.
\end{abstract}

\noindent\textbf{Keywords.}
Total positivity; total nonnegativity; Hadamard power; critical exponent;
Hadamard threshold; logarithmic core; Hankel matrix; Toeplitz matrix.

\medskip
\noindent\textbf{2020 Mathematics Subject Classification.}
Primary 15B48; Secondary 15A15, 15B05.

\section{Introduction}

For a matrix $A=(a_{ij})$ with positive entries and a real number $t>0$, its
Hadamard, or entrywise power is defined as
\[
 A^{\circ t}:=(a_{ij}^{t}).
\]
The relation between Hadamard powers and matrix positivity has been studied in
several settings.  The classical theorem of FitzGerald and Horn determines
which powers preserve positive semidefiniteness in a fixed dimension~\cite{FitzGeraldHorn1977}.  I
n the total positivity setting, Fallat and Johnson proved that the positive Hadamard powers
of a fixed totally positive matrix are totally positive for all sufficiently
large exponents.  They also established the basic order-three result
\cite{FallatJohnson2007}.  Later, Fallat, Johnson, and Sokal determined the
universal Hadamard critical exponents for general, symmetric, and Hankel TP
matrices, and produced sharp counterexamples \cite{FallatJohnsonSokal2017,
FallatJohnsonSokal2021}.

These results address a universal question: which exponents preserve total
positivity for every matrix in a given class?  We ask a different question.
For a fixed totally positive matrix $A$, how can we determine the set
\[
 \mathcal P(A):=\{t>0:A^{\circ t}\text{ is totally positive}\}?
\]
There are two natural questions about this set.  Is
$\mathcal P(A)=(0,\infty)$?  If not, what is the smallest number $T$ such that
$(T,\infty)\subseteq\mathcal P(A)$?  The $3\times3$ case is the natural place
to begin.  Hadamard powers preserve all minors of orders one and two, and after
a standard normalization only one exponential-polynomial inequality remains.

A related input-dependent threshold problem has recently been studied by
Alseidi, Alsaidi, and Garloff \cite{AlseidiAlsaidiGarloff2026} for positive
coefficientwise powers of Hurwitz-stable polynomials of degree five.
They prove that each such polynomial has a unique stability threshold in
$(0,1)$, with stability holding precisely above that threshold.  Their result
concerns Hurwitz stability, whereas we require strict positivity of every
minor of a fixed matrix.  In particular, our setting also admits the
zero-threshold case, which leads to the classification of eternal total
positivity.

In this paper, we develop this matrix-dependent point of view.  We state
clearly which arguments come from the known critical-exponent theory and which
conclusions are new.  We then determine the threshold range in order three
and in the symmetric order-four case.
Section~\ref{sec:toeplitz} applies classical infinite-divisibility tools to
ETP, derives an explicit symmetric Toeplitz order-four classification, and
identifies the nonprincipal obstruction in order five.

\section{Terminology and Background}\label{sec:background}

\begin{definition}
An $m\times n$ real matrix is \emph{totally positive} ($\TP$) if every minor is
strictly positive.  It is \emph{totally nonnegative} ($\TN$) if every minor is
nonnegative.  More generally, it is $\TP_r$ (respectively, $\TN_r$) if every
minor of order at most $r$ is positive (respectively, nonnegative).
\end{definition}

\begin{definition}[Eternal total positivity]
Let $A$ be a matrix with positive entries.  We say that $A$ is
\emph{eternally totally positive}, and write $A\in\ETP$, if
$A^{\circ t}\in\TP$ for every $t>0$.
Likewise, $A$ is \emph{eternally totally positive of order $r$}, written
$A\in\ETP_r$, if $A^{\circ t}\in\TP_r$ for every $t>0$.
\end{definition}

\begin{definition}[Hadamard threshold]
For a matrix $A$ with positive entries, define
\[
 \tau(A):=\inf\bigl\{T\geq0:
 A^{\circ t}\in\TP\text{ for every }t>T\bigr\},
\]
with the convention $\inf\varnothing=\infty$.  More generally,
\[
 \tau_r(A):=\inf\bigl\{T\geq0:
 A^{\circ t}\in\TP_r\text{ for every }t>T\bigr\}.
\]
\end{definition}

\begin{definition}[Hankel matrix]
A matrix $A=(a_{ij})$ is called a \emph{Hankel matrix} if its entries are
constant along the anti-diagonals.  Equivalently, there is a sequence
$(h_k)$ such that $a_{ij}=h_{i+j}$ for all $i,j$.
\end{definition}

If $A$ is totally positive, its sufficiently large Hadamard powers are totally
positive.  Thus the set in the definition of $\tau(A)$ is nonempty, and $A\in\ETP$ if and only if $\tau(A)=0$.
Indeed, any failure at an exponent $t_0>0$ forces $\tau(A)\geq t_0$.

\begin{remark}
The term ``eternal'' is used here to emphasize preservation under every
positive Hadamard power. Related all-power preservation phenomena occur in
the theory of infinitely divisible matrices and moment sequences
\cite{Berg2007, Bhatia2006}. We retain the adjective ``totally'' to
distinguish this property from other notions of positivity.
\end{remark}

\section{Previously established results}

We summarize the results most directly relevant to the present paper.

\begin{theorem}[Fallat--Johnson \cite{FallatJohnson2007}]
\label{thm:FJcritical}
If $A$ is $\TP_3$, then $A^{\circ t}$ is $\TP_3$ for every $t\geq1$.
Moreover, the lower bound $1$ cannot be replaced by a smaller universal
constant.  For each fixed totally positive matrix $A$, the matrix
$A^{\circ t}$ is totally positive for all sufficiently large $t$.
\end{theorem}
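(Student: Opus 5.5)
The statement has three parts, and I would prove them separately. Throughout I use two facts. Every minor of $A^{\circ t}$ is a minor of $B^{\circ t}$ for the corresponding submatrix $B$ of $A$, and that submatrix inherits total positivity. Also, minors of order $1$ and $2$ are preserved by every positive Hadamard power: $a^t d^t>b^t c^t$ if and only if $ad>bc$. So for the first assertion it suffices to show that if $A$ is a $3\times3$ $\TP$ matrix, then $\det A^{\circ t}>0$ for all $t\ge1$.

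\textbf{Large exponents.} I would do this first, since it is elementary. Let $A$ be $\TP$ and take a $k\times k$ submatrix $B=(b_{ij})$. I claim the identity permutation strictly maximizes $\prod_i b_{i\sigma(i)}$ over all permutations $\sigma$. If $\sigma$ has an inversion, $i<j$ with $\sigma(i)>\sigma(j)$, then exchanging these two values multiplies the product by
\[
\frac{b_{i\sigma(j)}\,b_{j\sigma(i)}}{b_{i\sigma(i)}\,b_{j\sigma(j)}},
\]
and this ratio exceeds $1$ by positivity of a $2\times2$ minor. Repeating the exchange reaches the identity permutation with a strict increase. Hence
\[
\det B^{\circ t}=\Bigl(\prod_i b_{ii}\Bigr)^{t}\bigl(1+o(1)\bigr)>0
\]
as $t\to\infty$. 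Since there are finitely many minors, this gives the last assertion.

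\textbf{Sharpness.} I would perturb a singular $\TN$ example. Take
\[
B_s=\begin{pmatrix}1&1&\eta\\ 1&2+s&1\\ \eta&1&1\end{pmatrix}.
\]
When $\eta=0$, the $2\times2$ minors are nonnegative and $\det B_s^{\circ t}=(2+s)^t-2$. This is positive at $t=1$ but negative for $t<\log2/\log(2+s)$. Given any $t_0<1$, I would choose $s>0$ small enough that $\log2/\log(2+s)>t_0$. I would then choose $\eta>0$ small, together with a small perturbation if needed, so that every minor of $B_s$ becomes strictly positive while the negativity of $\det B_s^{\circ t_0}$ survives by continuity. The routine point to check is that all $2\times2$ minors, including the one involving the corner entries, can be made strictly positive without destroying $\det>0$ at $t=1$.

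\textbf{The case $t\ge1$ for $3\times3$ minors; the main obstacle.} First I would normalize by positive diagonal scalings $D_1AD_2$. These preserve total positivity and commute with Hadamard powers up to a further diagonal scaling, so I may assume the first row and first column are all ones. Then I would use Sylvester's (Dodgson's) identity
\[
a_{22}\det A=PQ-RS,
\]
with $P=a_{11}a_{22}-a_{12}a_{21}$, $Q=a_{22}a_{33}-a_{23}a_{32}$, $R=a_{12}a_{23}-a_{13}a_{22}$ and $S=a_{21}a_{32}-a_{22}a_{31}$. Here $P,Q>0$, while $R,S$ are, up to sign, $2\times2$ minors. If $RS\le0$ at power $t$ there is nothing to prove. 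Otherwise I must show that $PQ>RS$ at $t=1$ implies the same inequality for the entrywise $t$-th powers when $t\ge1$. I would try to prove that $t\mapsto \log\bigl(P_tQ_t\bigr)-\log\bigl(R_tS_t\bigr)$ is nondecreasing on $[1,\infty)$ by comparing ratios of the form $(x^t-y^t)/(u^t-v^t)$ using the ordering of the entries forced by $\TP_2$. As a fallback, I would apply Descartes' rule of signs for exponential sums to $f(t)=\det A^{\circ t}$: it has six terms, and bounding its sign changes would limit $f$ to a single zero, which together with $f(1)>0$ and $f(\infty)>0$ rules out a zero beyond $1$. I expect this monotonicity, or sign-change, step to be the hard part.
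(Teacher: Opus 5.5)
Your proofs of the last two assertions are correct. The exchange argument shows that the identity permutation is the unique maximizer of $\prod_i b_{i\sigma(i)}$, and it uses only the $2\times2$ minors. Your example $B_s$ needs no further perturbation. For small $\eta>0$ its $2\times2$ minors are $1+s$, $1-\eta$, $1-\eta^2$ and $1-(2+s)\eta$, and $\det B_s=s+2\eta-(2+s)\eta^2>0$. Moreover, $\det B_s^{\circ t_0}=(2+s)^{t_0}-2+2\eta^{t_0}-(2+s)^{t_0}\eta^{2t_0}\to(2+s)^{t_0}-2<0$ as $\eta\downarrow0$.

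The genuine gap is the first assertion, which is the real content of the theorem. You correctly reduce it to showing $\det A^{\circ t}>0$ for $3\times3$ $\TP$ matrices and $t\geq1$, but you then only list two strategies and prove neither. Route (b) as you describe it would not suffice. Knowing there are ``six terms'' allows up to five sign changes, and even three sign changes allow three positive zeros. Two ingredients are missing. First, in normalized coordinates $x=\log a,\dots,w=\log d$, the $\TP_2$ inequalities order the exponents of $f(t)=(ad)^t+b^t+c^t-(bc)^t-a^t-d^t$ as $x<\{y,z\}<\{w,y+z\}<x+w$, with coefficient signs $-,+,+,-,-,+$, so there are exactly three sign changes. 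Second, $f(0)=f'(0)=0$, so the origin absorbs two of the three real zeros allowed by Laguerre's rule. At most one positive zero (counted with multiplicity) then remains, and it cannot lie in $[1,\infty)$ because $f(1)>0$ and $f\to+\infty$. This is the argument of Theorem~\ref{thm:classification} and Corollary~\ref{cor:threshold}. The paper only cites Fallat--Johnson for the statement itself, but those results give the $3\times3$ case with threshold below $1$.

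Route (a) would also close the gap, and more easily than you expect. In normalized form, $P_t=a^t-1$, $Q_t=(ad)^t-(bc)^t$, $R_t=b^t-a^t$ and $S_t=c^t-a^t$ are all positive $2\times2$ minors, so the case $R_tS_t\leq0$ never occurs. For $u>v$ one has $\frac{d}{dt}\log(e^{tu}-e^{tv})=v+\psi_t(u-v)$ with $\psi_t(s)=s/(1-e^{-ts})$, and $\frac1t+\frac s2\leq\psi_t(s)\leq\frac1t+s$. Hence
\[
\frac{d}{dt}\log\frac{P_tQ_t}{R_tS_t}=(y+z-2x)+\psi_t(x)+\psi_t(\Delta)-\psi_t(y-x)-\psi_t(z-x)\geq\frac{x+\Delta}{2}>0,
\]
where $\Delta=x+w-y-z>0$. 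So the Dodgson ratio is strictly increasing on $(0,\infty)$, and positivity at $t=1$ carries over to every $t\geq1$. This parallels Lemma~\ref{lem:Hmonotone}, which uses the factorization $(a^t-1)(d^t-1)-(b^t-1)(c^t-1)$ and needs an extra majorization step for its lower bound; your factorization avoids that step. As submitted, however, this step is only announced, so the main claim of the theorem is not yet established.
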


Thus $\tau(A)<\infty$ for every totally positive matrix $A$.  This result,
however, does not give a formula for the threshold of a particular matrix.

\begin{theorem}[Fallat--Johnson--Sokal
\cite{FallatJohnsonSokal2017,FallatJohnsonSokal2021}]
\label{thm:FJScritical}
The following universal critical-exponent statements hold.
\begin{enumerate}[label=\textup{(\roman*)}]
 \item For $\TP_3$ and $\TN_3$ matrices, the critical exponent is $1$.
 \item For symmetric $4\times4$ $\TP$ or $\TN$ matrices, it is $2$.
 \item For general $n\times n$ matrices with $n\geq4$, it is infinite.
 \item For symmetric $n\times n$ matrices with $n\geq5$, it is infinite.
 \item For Hankel $\TP_r$ or $\TN_r$ matrices, it is exactly $r-2$.
\end{enumerate}
\end{theorem}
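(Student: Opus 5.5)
Since the theorem only collects results of Fallat, Johnson and Sokal, the proof in the paper can simply cite \cite{FallatJohnsonSokal2017,FallatJohnsonSokal2021}. Below is how I would reconstruct each item. Each one has two halves. The \emph{preservation} half shows that every $t$ at or above the stated exponent preserves the class. The \emph{sharpness} half exhibits, for every smaller non-integer $t$, a matrix in the class whose Hadamard power $A^{\circ t}$ leaves it. Infinite critical exponents need only the sharpness half, and with arbitrarily large $t$. For $\TN$ classes, I would pass from $\TP$ to $\TN$ by density and continuity of minors in the entries. For $\TP$ classes, I would perturb $\TN$ counterexamples to $\TP$ ones, using Whitney's density theorem and the fact that a strictly negative minor survives small perturbations.

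For (i), minors of order $\le2$ are preserved by every $t>0$, since $a_{ij}a_{kl}>a_{il}a_{kj}$ is preserved by $x\mapsto x^t$. So only $3\times3$ minors matter. After positive diagonal scaling, which commutes with Hadamard powers, I would normalise the first row and column to $1$. The determinant then becomes an exponential polynomial in $t$, and its positivity for $t\ge1$ should follow from the $2\times2$ inequalities via a convexity or Descartes-rule-of-signs argument. This is Theorem~\ref{thm:FJcritical}. Sharpness comes from an explicit one-parameter family whose $3\times3$ determinant is positive at $t=1$ but negative for some $t<1$.

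For (v), I would use the Stieltjes moment characterisation: a Hankel $\TN$ matrix of finite order corresponds to a positive measure on $[0,\infty)$. Preservation for $t\ge r-2$ then mirrors the FitzGerald--Horn integral representation,
\[
x^t=\text{Taylor part}+\text{integral of }(x-\lambda)_+^{\,r-2}\text{-type kernels},
\]
applied to both $(h_{i+j})$ and its shift $(h_{i+j+1})$. Sharpness would use Hankel perturbations $h_k=1+\varepsilon x^k$ expanded to order $\varepsilon^{r}$. Here the coefficient $\binom{t}{r-1}$ changes sign exactly when $t<r-2$ is non-integer.

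Items (ii)--(iv) are where I expect the main obstacle. For (iii) and (iv) I would start from a $4\times4$ (respectively symmetric $5\times5$) $\TN$ matrix whose determinant of $A^{\circ t}$ is an exponential sum. Its dominant term as $t\to\infty$ should be negative even though it vanishes or is positive at $t=1$; perturbing the matrix inside the class then gives failures at arbitrarily large $t$. For (ii) the difficulty is the preservation half. One must show that for symmetric $4\times4$ $\TN$ matrices every $t\ge2$ keeps the full determinant and all $3\times3$ minors nonnegative, and the nonprincipal minors are not controlled by positive semidefiniteness alone. I would first try to combine the order-three result (i) for the $3\times3$ minors with a Schur-product identity writing $A^{\circ t}=A\circ A\circ A^{\circ(t-2)}$. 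The idea is to show that $\det(A\circ A)$ dominates suitably, with a separate counterexample for $1<t<2$. I expect this step to require the detailed case analysis of \cite{FallatJohnsonSokal2021}.
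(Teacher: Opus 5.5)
Citing \cite{FallatJohnsonSokal2017,FallatJohnsonSokal2021} is exactly what the paper does: Theorem~\ref{thm:FJScritical} is background and gets no proof of its own. The reconstruction you sketch, however, would not go through as written. The most serious problem is (iv). The full determinant of a symmetric $5\times5$ $\TN$ matrix can never have a negative dominant term. Such a matrix is positive semidefinite with nonnegative entries, so FitzGerald--Horn \cite{FitzGeraldHorn1977} gives $A^{\circ t}\succeq0$ for all $t\geq n-2=3$. Hence every principal minor, in particular $\det A^{\circ t}$, is nonnegative there. Minors of order at most three are covered by (i), so a failure at large $t$ can only occur in a \emph{nonprincipal} $4\times4$ minor. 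This is the same nonprincipal mechanism that Section~\ref{sec:toeplitz} exhibits in order five.

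The same observation shows that you have put the difficulty of (ii) in the wrong half. In a $4\times4$ matrix the only minor of order four is principal, so preservation is immediate. A symmetric $\TP$ (resp.\ $\TN$) matrix is positive definite (resp.\ semidefinite), FitzGerald--Horn with $n-2=2$ controls $\det A^{\circ t}$ for $t\geq2$, and (i) controls all other minors for $t\geq1$. Your factorisation $A\circ A\circ A^{\circ(t-2)}$ would not help anyway, because for $0<t-2<1$ the factor $A^{\circ(t-2)}$ need not be positive semidefinite. Already the symmetric $3\times3$ matrices $A_s$ of Theorem~\ref{thm:realization} have $\det A_s^{\circ r}<0$ for $r<s$. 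The real content of (ii) is sharpness on $(1,2)$. This comes from the rank-two kernel $(1+\varepsilon u_iu_j)$: by \eqref{eq:kernel-vandermonde} with $g(z)=(1+z)^t$, the leading coefficient is $\prod_{k=0}^{3}\binom{t}{k}$ times a squared Vandermonde. It is negative for $1<t<2$, since only $\binom{t}{3}$ is negative. One then perturbs into the symmetric $\TP$ class as in the proof of Theorem~\ref{thm:exactrange4}.

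In (v), your claim that $\binom{t}{r-1}$ is negative exactly for non-integer $t<r-2$ is false. For $t\in(k-1,k)$ its sign is $(-1)^{r-1-k}$. For example, with $r=4$ and $0<t<1$ one has $\binom{t}{3}>0$, so the coefficient you single out detects nothing. The test has to depend on $t$. If $m=\lfloor t\rfloor\leq r-3$, then $\binom{t}{m+2}$ has exactly one negative factor. The $(m+3)\times(m+3)$ leading block of $\bigl((1+\varepsilon x^{i+j})^{t}\bigr)$ has determinant $\varepsilon^{(m+2)(m+3)/2}\prod_{k=0}^{m+2}\binom{t}{k}$ times a squared Vandermonde, plus higher-order terms. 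Its sign is that of $\binom{t}{m+2}<0$, and $m+3\leq r$ exactly when $t<r-2$.

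For (iii), your mechanism can only work for a genuinely degenerate $\TN$ matrix, because every $\TP$ matrix has eventually $\TP$ powers and hence a positive dominant term. By the remark on (ii), that matrix must also be nonsymmetric. Exhibiting it is the whole content of the item, and the proposal does not do so.

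Finally, in (i) the $2\times2$ inequalities only fix the ordering of the exponents, giving three sign changes. Positivity for $t\geq1$ also needs $\det A>0$ at $t=1$, together with the double zero at the origin, as in the proof of Theorem~\ref{thm:classification}.
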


In particular, for each fixed $t>1$, there is a $4\times4$ totally positive
matrix $A$ for which $A^{\circ t}$ is not even $\TN_4$.  There is no conflict
with eventual total positivity: the threshold depends on $A$ and is not
uniformly bounded on the whole class.

More recent papers consider entrywise functions that preserve total
positivity, as well as totally positive kernels and P\'olya frequency functions
\cite{BeltonEtAl2022,BeltonEtAl2023}.  These results show that an entrywise
transformation is very restricted when it must work in every dimension.  Our
problem is different: we fix the matrix and vary only the exponent.

\section{The normalized \texorpdfstring{$3\times3$}{3 by 3} problem}

Positive diagonal scaling is particularly useful here.  If $D_1,D_2$ are
positive diagonal matrices, then
$(D_1AD_2)^{\circ t}=D_1^{\circ t}A^{\circ t}D_2^{\circ t}$.
Consequently, positive diagonal scaling preserves the sign of every minor of
every positive Hadamard power.

The following two standard reductions are recalled for completeness.  They are
used explicitly in the proof of the order-three Hadamard-power theorem; see
\cite[in the proof of Theorem~5.2]{FallatJohnsonSokal2017} and the earlier treatment
in \cite{FallatJohnson2007}.

\begin{proposition}[Standard normalization]
Every $3\times3$ matrix with positive entries is diagonally equivalent to a
matrix of the form
\begin{equation}\label{eq:normalized}
 A=\begin{pmatrix}
 1&1&1\\
 1&a&b\\
 1&c&d
 \end{pmatrix}.
\end{equation}
The original matrix is eternally totally positive if and only if its normalized
representative is eternally totally positive.
\end{proposition}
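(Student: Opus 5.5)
We construct the positive diagonal matrices explicitly and then use the scaling identity recorded just before the proposition. Let $B=(b_{ij})$ be a $3\times3$ matrix with positive entries. Take
\[
 D_1=\operatorname{diag}\bigl(1,\;b_{11}/b_{21},\;b_{11}/b_{31}\bigr),
 \qquad
 D_2=\operatorname{diag}\bigl(1/b_{11},\;1/b_{12},\;1/b_{13}\bigr).
\]
Then $(D_1BD_2)_{ij}=(D_1)_{ii}\,b_{ij}/b_{1j}$.

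\begin{itemize}[label=--]
\item The first row is $b_{1j}/b_{1j}=1$.
\item The first column is $(D_1)_{ii}\,b_{i1}/b_{11}$, which equals $1$ by the choice of $(D_1)_{ii}$.
\item The remaining entries are
\[
 a=\frac{b_{11}b_{22}}{b_{21}b_{12}},\quad
 b=\frac{b_{11}b_{23}}{b_{21}b_{13}},\quad
 c=\frac{b_{11}b_{32}}{b_{31}b_{12}},\quad
 d=\frac{b_{11}b_{33}}{b_{31}b_{13}}.
\]
These are positive, so $D_1BD_2$ has the form \eqref{eq:normalized}.
\end{itemize}

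For the second claim, write $A=D_1BD_2$ and fix $t>0$. The identity $(D_1BD_2)^{\circ t}=D_1^{\circ t}B^{\circ t}D_2^{\circ t}$ holds entrywise, since $(d_i b_{ij} e_j)^t=d_i^t b_{ij}^t e_j^t$ for positive reals. The matrices $D_1^{\circ t}$ and $D_2^{\circ t}$ are again positive diagonal.

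By Cauchy--Binet, or by multilinearity of the determinant in rows and columns, any minor of $D_1^{\circ t}B^{\circ t}D_2^{\circ t}$ with row set $I$ and column set $J$ satisfies
\[
 \det\bigl(D_1^{\circ t}B^{\circ t}D_2^{\circ t}\bigr)[I,J]
 =\Bigl(\prod_{i\in I}(D_1)_{ii}^t\Bigr)\Bigl(\prod_{j\in J}(D_2)_{jj}^t\Bigr)\det B^{\circ t}[I,J].
\]
The prefactor is strictly positive. Hence $A^{\circ t}\in\TP$ if and only if $B^{\circ t}\in\TP$.

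Taking this for all $t>0$ gives $A\in\ETP\iff B\in\ETP$. Indeed, the full sets $\mathcal P(A)$ and $\mathcal P(B)$ coincide, so $\tau(A)=\tau(B)$ as well.

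There is no real obstacle in this argument. The only points that need care are:
\begin{itemize}[label=--]
\item choosing the diagonal entries in the right order, normalizing the first row with $D_2$ before fixing the first column with $D_1$;
\item noting that positivity of all entries is what makes both the divisions and the real powers $t>0$ legitimate.
\end{itemize}

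The normalized representative need not be unique. Any two representatives are diagonally equivalent to each other, so the conclusion does not depend on which one is chosen.
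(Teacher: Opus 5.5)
Your proof is correct and follows the paper's route: the paper justifies the proposition by the identity $(D_1AD_2)^{\circ t}=D_1^{\circ t}A^{\circ t}D_2^{\circ t}$ and the resulting preservation of the sign of every minor. Your explicit $D_1,D_2$ yield exactly the cross-ratio entries $n_{ij}=b_{11}b_{ij}/(b_{i1}b_{1j})$ that the paper uses later for the logarithmic core.

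One correction: your closing remark is wrong, because the normalized representative is unique. If $DAE$ and $A$ both have ones in the first row and column, then $D=\lambda I$ and $E=\lambda^{-1}I$, so $a,b,c,d$ are determined by $B$. This remark is not used in your argument, so the proof is unaffected.
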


For the ordering in \eqref{eq:normalized}, total positivity yields
\begin{equation}\label{eq:TPconditions}
 b>a>1,\quad c>a>1,\quad d>b,\quad d>c,\quad ad>bc,
\end{equation}
and
\begin{equation}\label{eq:detpositive}
 ad-bc-d+b+c-a>0.
\end{equation}
Every positive Hadamard power preserves the $1\times1$ and $2\times2$ minors
of a $\TP_2$ matrix.  Hence, in order three, only the determinant remains.
The preservation of minors of order at most two and the determinant formula
below are recorded in \cite[Proposition~5.1 and proof of
Theorem~5.2]{FallatJohnsonSokal2017}.

\begin{proposition}[Standard determinant reduction]
\label{prop:determinant-reduction}
Let $A$ be the normalized matrix in \eqref{eq:normalized} and assume that it is
totally positive.  Then $A\in\ETP$ if and only if
\begin{equation*}\label{eq:F}
 F_A(t):=(ad)^t+b^t+c^t-(bc)^t-a^t-d^t>0
 \qquad(t>0).
\end{equation*}
\end{proposition}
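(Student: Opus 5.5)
The plan is to reduce total positivity of $A^{\circ t}$ to positivity of its full determinant, and then to expand that determinant for the normalized matrix.

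\textbf{Step 1: reduction to one determinant.} Fix $t>0$. The matrix $A^{\circ t}$ is $3\times3$, so its minors have orders one, two and three, and I will handle them in that order.

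For order one, every entry of $A$ is positive, so every entry $a_{ij}^t$ is positive.

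For order two, a $2\times2$ minor of $A^{\circ t}$ has the form
\[
x^t w^t-y^t z^t=(xw)^t-(yz)^t .
\]
Since $u\mapsto u^t$ is strictly increasing on $(0,\infty)$, this is positive exactly when $xw>yz$. That inequality holds because $A$ is $\TP$. Hence all minors of order at most two are positive for every $t>0$, independently of anything further.

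This leaves only $\det A^{\circ t}$. Consequently, $A^{\circ t}\in\TP$ if and only if $\det A^{\circ t}>0$.

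\textbf{Step 2: computing the determinant.} Expand $\det A^{\circ t}$ for the normalized form \eqref{eq:normalized}. The powered matrix has rows $(1,1,1)$, $(1,a^t,b^t)$ and $(1,c^t,d^t)$. Cofactor expansion along the first row gives
\[
(a^td^t-b^tc^t)-(d^t-b^t)+(c^t-a^t).
\]
This equals $(ad)^t+b^t+c^t-(bc)^t-a^t-d^t=F_A(t)$. Therefore $A\in\ETP$ if and only if $F_A(t)>0$ for all $t>0$, which is the claim.

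\textbf{Scope and the main obstacle.} The normalization proposition already lets us pass to the normalized representative, so no further scaling argument is needed here. The proposition does not ask us to analyze the sign of $F_A$; it only reduces $\ETP$ to that sign condition. The only step requiring care is the monotonicity argument for the $2\times2$ minors, and that step is short.
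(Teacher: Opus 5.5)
Your proposal is correct and follows essentially the same route as the paper: every minor of order at most two of $A^{\circ t}$ is positive for all $t>0$, so only the full determinant matters, and its cofactor expansion is $F_A(t)$. The one difference is that you justify the order-two step explicitly via $(xw)^t-(yz)^t>0$ and the monotonicity of $u\mapsto u^t$, where the paper simply asserts it.
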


\begin{proof}
All minors of order one or two of $A^{\circ t}$ are positive for $t>0$.
Direct expansion gives $\det(A^{\circ t})=F_A(t)$, so the assertion follows.
\end{proof}

Set $x=\log a,~ y=\log b,~ z=\log c$, and $w=\log d$
and introduce the vectors
 $p=(x+w,y,z)$ and $q=(y+z,x,w)$.
Their coordinate sums are equal, and
\[F_A(t)=\sum_{j=1}^{3}e^{tp_j}-\sum_{j=1}^{3}e^{tq_j}.\]
This formula may suggest using classical majorization.  However, the
total-positivity inequalities show that the two vectors cannot be compared in
this way.  Indeed,
$x<\min(y,z)$, $w>\max(y,z)$, and $x+w>y+z$.  Thus $x+w$ is the largest
coordinate of $p$, while $x$ is the smallest coordinate of $q$.  Consequently,
neither $q\prec p$ nor $p\prec q$ holds.  We therefore study the
exponential inequality directly.

\section{Complete analysis in order three}

Taylor expansion at the origin gives $F_A(0)=F_A'(0)=0$ and
\[
 F_A^{(k)}(0)=
 (x+w)^k+y^k+z^k-(y+z)^k-x^k-w^k.
\]

A direct calculation also gives $ F_A''(0)=2(xw-yz)$,
so $xw-yz$ determines the sign near zero.  The next theorem shows that the same
quantity determines the conclusion for every $t>0$.

\begin{theorem}[Order-three classification]\label{thm:classification}
Let $A$ be a normalized $3\times3$ totally positive matrix as in
\eqref{eq:normalized}.  Then the following are equivalent:
\begin{enumerate}[label=\textup{(\roman*)}]
 \item $A$ is eternally totally positive;
 \item $F_A(t)>0$ for every $t>0$;
 \item
 \begin{equation}\label{eq:classification}
   (\log a)(\log d)\geq(\log b)(\log c).
 \end{equation}
\end{enumerate}
\end{theorem}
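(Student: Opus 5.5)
The equivalence of (i) and (ii) is exactly Proposition~\ref{prop:determinant-reduction}, so everything hinges on (ii)$\iff$(iii). The plan is to rewrite $F_A$ so that the exponent $t$ enters only as a common multiplicative shift, and then use convexity of a single one-variable function. Recall that with $x=\log a$, $y=\log b$, $z=\log c$, $w=\log d$, the conditions \eqref{eq:TPconditions} give $0<x<\min(y,z)$, $\max(y,z)<w$ and $x+w>y+z$.

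\textbf{Necessity.} Suppose $xw<yz$. Since $F_A(0)=F_A'(0)=0$ and $F_A''(0)=2(xw-yz)<0$, we get $F_A(t)<0$ for all small $t>0$. Hence (ii) fails, so (ii) implies (iii).

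\textbf{Sufficiency.} The first step is the factorization
\[
F_A(t)=(e^{tx}-1)(e^{tw}-1)-(e^{ty}-1)(e^{tz}-1),
\]
which one checks by expanding both products; the constant terms $+1$ cancel. All four factors are positive for $t>0$, so $F_A(t)>0$ is equivalent to
\[
\psi(U_1)+\psi(U_4)>\psi(U_2)+\psi(U_3),
\]
where $\psi(u):=\log(e^{e^u}-1)$ and
\[
U_1=\log t+\log x,\quad U_2=\log t+\log y,\quad U_3=\log t+\log z,\quad U_4=\log t+\log w .
\]
The parameter $t$ now appears only as the common shift $\log t$. The orderings $U_1<U_2,U_3<U_4$ hold for every $t$, and condition (iii) reads $U_1+U_4\ge U_2+U_3$, also independently of $t$.

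Next I compute $\psi'(u)=k(e^u)$ with $k(s)=se^s/(e^s-1)$. This is positive, and it is strictly increasing in $s>0$ (a routine derivative check, or note $k(s)=s+s/(e^s-1)$ and compare derivatives). Hence $\psi$ is strictly increasing and strictly convex.

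Then I apply a two-step majorization argument:
\begin{itemize}
\item[1.] Put $U_4'=U_2+U_3-U_1$. Then $U_4'\ge \max(U_2,U_3)$, and (iii) gives $U_4'\le U_4$.
\item[2.] The pair $(U_1,U_4')$ has the same sum as $(U_2,U_3)$ and strictly majorizes it, because $U_1<\min(U_2,U_3)$. Strict convexity (Karamata, or a direct chord-slope comparison) gives $\psi(U_1)+\psi(U_4')>\psi(U_2)+\psi(U_3)$.
\item[3.] Monotonicity gives $\psi(U_4)\ge\psi(U_4')$, which yields the required strict inequality for every $t>0$.
\end{itemize}

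The main obstacle is spotting the factorization. The paper notes that direct majorization of the exponent vectors $p,q$ fails, and the factorization is what turns the problem into a majorization statement in logarithmic coordinates. Once that is in place, the only analytic input is the convexity of $\psi$, i.e. the monotonicity of $s e^s/(e^s-1)$, which is elementary.
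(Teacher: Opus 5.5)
Your proof is correct, and your sufficiency argument takes a genuinely different route from the paper's.

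\textbf{The paper's route.} The paper proves (iii)$\Rightarrow$(ii) by zero counting. The coefficient sequence of $F_A$ has three sign changes, so Laguerre's rule of signs allows at most three real zeros counted with multiplicity. The double zero at the origin, together with positivity just to the right of $0$ and at $+\infty$, then leaves no room for a positive zero. The boundary case $xw=yz$ is handled separately, using a triple zero at the origin and $F_A(1)>0$.

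\textbf{Your route.} You start from the factorization \eqref{eq:factorization}, which the paper records only afterwards as an interpretation. You observe that $\log\frac{(e^{tx}-1)(e^{tw}-1)}{(e^{ty}-1)(e^{tz}-1)}$ equals $\psi(U_1)+\psi(U_4)-\psi(U_2)-\psi(U_3)$, where $\psi(u)=\log(e^{e^u}-1)$ is increasing and strictly convex. One majorization step at fixed $t$ then finishes the proof. The key checks all hold:
\begin{itemize}
\item the numerator of $k'(s)$ is $1-(1+s)e^{-s}>0$;
\item $U_4'>\max(U_2,U_3)$ because $U_1<\min(U_2,U_3)$;
\item (iii) is exactly $U_1+U_4\ge U_2+U_3$, since all logarithms are positive.
\end{itemize}

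\textbf{What your approach buys.} It is self-contained, with no appeal to Laguerre's rule. It treats $xw>yz$ and $xw=yz$ uniformly. It uses only $1<a<\min(b,c)$ rather than full total positivity, so it shows that (iii) together with these order-two conditions already forces $\det A>0$.

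\textbf{What it does not give.} It says nothing about the non-eternal case. The paper's zero count immediately yields Corollary~\ref{cor:threshold}: a unique, simple positive zero $t_A\in(0,1)$ and a single sign change. A fixed-$t$ convexity argument cannot tell you how many times the sign flips when $xw<yz$. For that you need something like Lemma~\ref{lem:Hmonotone}. That lemma shows your quantity (the paper's $H_A$) is strictly increasing in $t$, with $H_A(0^+)=\log\frac{xw}{yz}$. Its proof uses only the TP inequalities, not $xw<yz$, so it would by itself give both directions of (ii)$\iff$(iii) together with the uniqueness of the crossing.
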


\begin{proof}
The equivalence of \textup{(i)} and \textup{(ii)} follows from
Proposition~\ref{prop:determinant-reduction}.
If (ii) holds, the expansion
\[
 F_A(t)=(xw-yz)t^2+O(t^3)
\]
implies $xw-yz\geq0$, proving (iii).

For the converse, list the exponents of $F_A$ in increasing order.  The
smallest exponent is $x$, with negative coefficient; the next two are $y,z$,
with positive coefficients; the next two are $w,y+z$, with negative
coefficients; and the largest is $x+w$, with positive coefficient.  Equalities
inside either same-sign pair do not affect the argument.  Hence the coefficient
sequence has exactly three sign changes.  Laguerre's rule of signs for
exponential polynomials \cite[Part~V, Chapter~1]{PolyaSzego1976} implies that
$F_A$ has at most three real zeros,
counted with multiplicity.

The origin is already a zero of multiplicity at least two.  If $xw>yz$, it has
multiplicity exactly two and $F_A(t)>0$ for small positive $t$. 
 Since $F_A(t)\to+\infty$ as $t\to+\infty$, the function is
also positive for sufficiently large $t$.
Any positive zero would contribute at least two to the total
multiplicity of the positive zeros: a zero of even multiplicity
already contributes at least two, whereas a zero of odd multiplicity
changes the sign and therefore requires another positive zero,
since $F_A$ is positive both for sufficiently small positive $t$
and for sufficiently large $t$.
Together with the double zero at the origin, this would give
at least four real zeros counted with multiplicity, contradicting
Laguerre's rule of signs.
Thus $F_A(t)>0$ for all $t>0$.  If $xw=yz$, the zero at the origin has
multiplicity at least three, so there are no other real zeros; since
$F_A(1)>0$, the function is again positive on $(0,\infty)$.
\end{proof}

The following factorization gives another way to read the theorem:
\begin{equation}\label{eq:factorization}
 F_A(t)=(a^t-1)(d^t-1)-(b^t-1)(c^t-1).
\end{equation}
After dividing both products by $t^2$ and letting $t\downarrow0$, their
comparison becomes exactly condition \eqref{eq:classification}.

The proof of the $\TP_3$ critical-exponent theorem in
\cite{FallatJohnsonSokal2017} already contains the main zero-counting idea.  In
particular, it identifies the double zero at the origin, the coefficient
$\log a\log d-\log b\log c$, and the three sign changes of the exponential
polynomial.  The necessary-and-sufficient condition for all positive
exponents, however, does not seem to have been stated there.  Thus
Theorem~\ref{thm:classification} states this consequence as an explicit
classification; its proof is an extraction of the earlier zero-counting argument.

\begin{corollary}[Exact threshold in order three]\label{cor:threshold}
Let $A$ be as in Theorem~\ref{thm:classification}.
\begin{enumerate}[label=\textup{(\roman*)}]
 \item If $(\log a)(\log d)\geq(\log b)(\log c)$, then $\tau(A)=0$.
 \item If $(\log a)(\log d)<(\log b)(\log c)$, then $F_A$ has a unique
 positive zero $t_A\in(0,1)$, and
 \[
  F_A(t)<0\quad(0<t<t_A),\qquad
  F_A(t)>0\quad(t>t_A).
 \]
 In particular, $\tau(A)=t_A$.
\end{enumerate}
\end{corollary}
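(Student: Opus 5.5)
Part (i) is immediate: by Theorem~\ref{thm:classification}, the condition $(\log a)(\log d)\geq(\log b)(\log c)$ gives $A^{\circ t}\in\TP$ for every $t>0$, i.e.\ $A\in\ETP$. As noted after the definition of the Hadamard threshold, this is the same as $\tau(A)=0$.

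For part (ii) I will reuse the zero-counting argument from the proof of Theorem~\ref{thm:classification}. Laguerre's rule of signs applies to $F_A$ regardless of the sign of $xw-yz$, so $F_A$ has at most three real zeros counted with multiplicity. Now suppose $xw<yz$. Then $F_A''(0)=2(xw-yz)<0$, so the origin is a zero of multiplicity exactly two, and the expansion $F_A(t)=(xw-yz)t^2+O(t^3)$ shows that $F_A(t)<0$ for small $t>0$. On the other hand, $F_A(1)=\det A>0$ by \eqref{eq:detpositive}. By the intermediate value theorem there is a zero in $(0,1)$.

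Next I count multiplicities. The double zero at the origin uses up two of the at most three available, so at most one further real zero exists, and it must be simple. Hence $F_A$ has exactly one positive zero $t_A$, which lies in $(0,1)$. Because $t_A$ is a simple zero, $F_A$ changes sign there. Since there is no other positive zero, $F_A$ has constant sign on $(0,t_A)$ and on $(t_A,\infty)$. This gives $F_A<0$ on $(0,t_A)$, consistent with the small-$t$ behaviour, and $F_A>0$ on $(t_A,\infty)$, consistent with $F_A(1)>0$ and with $F_A\to+\infty$.

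Finally I translate this into a statement about $\tau(A)$. The minors of order one and two of $A^{\circ t}$ are positive for all $t>0$, as in the proof of Proposition~\ref{prop:determinant-reduction}, so $A^{\circ t}\in\TP$ if and only if $F_A(t)>0$. Therefore $\mathcal P(A)=(t_A,\infty)$ exactly. The infimum of the admissible $T$ is then $t_A$: every $T\ge t_A$ works, while any $T<t_A$ fails because $F_A<0$ on $(T,t_A)$ if $T>0$, or on $(0,t_A)$ if $T=0$. This proves $\tau(A)=t_A$. The only delicate point is the multiplicity bookkeeping. It requires checking that the Laguerre count in the cited form includes the zero at $t=0$, counted with its multiplicity, which is exactly how it was used in Theorem~\ref{thm:classification}.
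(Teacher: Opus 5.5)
Your proof is correct and follows essentially the same route as the paper: $F_A<0$ just to the right of $0$ and $F_A(1)=\det A>0$ give a zero in $(0,1)$. Laguerre's rule, with the double zero at the origin counted, makes that zero the unique positive zero and a simple one, and the sign pattern and $\tau(A)=t_A$ follow. Your extra bookkeeping, namely that the Laguerre count includes $t=0$ and the explicit infimum argument for $\tau(A)$, only spells out details the paper leaves implicit.
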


\begin{proof}
Part \textup{(i)} follows immediately from
Theorem~\ref{thm:classification}.  For part \textup{(ii)}, the function is
negative immediately to the right
of zero and positive at $t=1$, so it has a zero in $(0,1)$.  Laguerre's rule,
together with the double zero at the origin, shows that this is the only other
real zero and that it is simple.  The sign assertions and the formula for the
threshold follow.
\end{proof}

When the second case of Corollary~\ref{cor:threshold} holds, we call the
unique number $t_A\in(0,1)$ the \emph{transition exponent} of $A$.

Thus, in order three, the positivity set is always a single interval:
\[
 \mathcal P(A)=(0,\infty)
 \quad\text{or}\quad
 \mathcal P(A)=(t_A,\infty).
\]

Corollary~\ref{cor:threshold} gives a matrix-by-matrix description of the
positivity set for $3\times3$ totally positive matrices, including the
zero-threshold case.  It refines the zero-counting method in
\cite{FallatJohnsonSokal2017}.  The related threshold theorem in
\cite{AlseidiAlsaidiGarloff2026} concerns a different preservation with problem, namely the
Hurwitz stability of coefficientwise powers of degree-five polynomials.

\section{Quantitative bounds for the order-three threshold}

Assume throughout this section that $xw<yz$,
so that the unique transition exponent $t_A\in(0,1)$ exists.  Define
\begin{equation*}\label{eq:deltagap}
 \delta:=\log\frac{yz}{xw}>0,
 \qquad
 \Delta:=x+w-y-z>0,
 \qquad
 M:=x+w-\frac{y+z}{2}>0.
\end{equation*}
Consider the logarithmic determinant ratio
\begin{equation}\label{eq:H}
 H_A(t):=\log\frac{(e^{tx}-1)(e^{tw}-1)}
 {(e^{ty}-1)(e^{tz}-1)}.
\end{equation}
Then $H_A(t_A)=0$ and
$ \lim_{t\downarrow0}H_A(t)=-\delta$.
Indeed, by \eqref{eq:factorization}, the equality $F_A(t_A)=0$ is equivalent
to $H_A(t_A)=0$.  Moreover, using $e^{tu}-1=tu+O(t^2)$ as $t\downarrow0$,
we obtain
\[
 \lim_{t\downarrow0}H_A(t)=\log\frac{xw}{yz}=-\delta.
\]

The following lemma shows that the logarithmic determinant ratio is strictly
increasing and gives two-sided bounds for its derivative.

\begin{lemma}\label{lem:Hmonotone}
The function $H_A$ is strictly increasing on $(0,\infty)$.  More precisely,
\begin{equation}\label{eq:Hprimebounds}
 \frac{\Delta}{2}\leq H_A'(t)\leq M
 \qquad(t>0).
\end{equation}
\end{lemma}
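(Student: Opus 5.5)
The plan is to differentiate $H_A$ term by term and compare everything with one auxiliary function of a single variable. For $u>0$ we have
\[
 \frac{d}{dt}\log(e^{tu}-1)=\frac{u e^{tu}}{e^{tu}-1}=\frac{1}{t}\,\varphi(tu),
 \qquad \varphi(s):=\frac{s}{1-e^{-s}} .
\]
Hence
\[
 H_A'(t)=\frac1t\bigl[\varphi(tx)+\varphi(tw)-\varphi(ty)-\varphi(tz)\bigr].
\]
Here all of $x,y,z,w$ are positive, since $a>1$ and \eqref{eq:TPconditions} hold. I will write $\varphi$ in two ways:
\[
 \varphi(s)=\frac s2+\psi(s)=s+\chi(s),
 \qquad
 \psi(s)=\frac s2\coth\frac s2,
 \qquad
 \chi(s)=\frac{s}{e^s-1}.
\]
The elementary facts I need are the following. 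On $[0,\infty)$ the function $\psi$ is increasing and convex with $\psi(0)=1$, so $\psi\geq 1$. The function $\chi=\psi-s/2$ is decreasing with $0<\chi\leq1$. Combining these gives $\chi(s)\geq 1-s/2$.

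\emph{Lower bound.} Substituting the first decomposition gives
\[
 H_A'(t)=\frac{\Delta}{2}+\frac1t\bigl[\psi(tx)+\psi(tw)-\psi(ty)-\psi(tz)\bigr].
\]
So it suffices to show $\psi(tx)+\psi(tw)\geq\psi(ty)+\psi(tz)$. From \eqref{eq:TPconditions} we have $x<\min(y,z)$, $w>\max(y,z)$ and $x+w>y+z$. Thus the pair $(tw,tx)$ weakly submajorizes $(ty,tz)$: the larger entries satisfy $tw\geq\max(ty,tz)$, and the total sums satisfy $t(x+w)\geq t(y+z)$. The Tomić--Weyl (weak Karamata) inequality applied to the increasing convex function $\psi$ then gives the claim. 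This is the step that carries the content, and I expect it to be the main obstacle. The task there is to justify that $\psi$ is both increasing and convex, for instance via the series $\psi(s)=1+\sum_{k\ge1}\frac{2s^2}{s^2+4\pi^2k^2}$, where each summand is increasing and convex in $s\ge0$ on the relevant range, or directly by differentiating. A two-variable version of the majorization inequality can also be verified by hand if one prefers to avoid citing it. Since $\Delta>0$, the lower bound immediately yields strict monotonicity of $H_A$.

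\emph{Upper bound.} Substituting the second decomposition gives
\[
 H_A'(t)=\Delta+\frac1t\bigl[\chi(tx)-\chi(tz)\bigr]+\frac1t\bigl[\chi(tw)-\chi(ty)\bigr].
\]
Since $w>y$ and $\chi$ is decreasing, the last bracket is $\le0$. For the middle bracket, $\chi\leq 1$ and $\chi(s)\geq1-s/2$ give $\chi(tx)-\chi(tz)\leq tz/2$. Hence $H_A'(t)\leq \Delta+z/2\leq\Delta+(y+z)/2=M$. This also shows that the looser pairing still lands exactly on $M$.
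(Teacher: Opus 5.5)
Your argument is correct, and the lower bound is exactly the paper's. The paper's $\psi_t(s)$ is your $\frac1t\varphi(ts)$. It splits this as $\frac1t+\frac s2+k_t(s)$ with $k_t(s)=\frac1t\bigl(\frac{ts}{2}\coth\frac{ts}{2}-1\bigr)$, which is your $\frac1t\bigl(\psi(ts)-1\bigr)$. It then applies the same weak-majorization inequality for increasing convex functions.

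For the upper bound, the paper uses the same two facts you isolate, $\psi\geq1$ and $\chi\leq1$; in its notation these read $\frac1t+\frac s2\leq\psi_t(s)\leq\frac1t+s$. It applies them termwise: the upper estimate to the $x,w$ terms and the lower estimate to the $y,z$ terms. That reaches $M$ in one line. Your pairing of $x$ with $z$ and $w$ with $y$, together with the monotonicity of $\chi$, gives the sharper bound $H_A'(t)\leq\Delta+z/2$, and by symmetry $\Delta+\min(y,z)/2$. This is more than the lemma needs, at the cost of one extra fact.

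Two corrections to the convexity step you flagged. First, the series does not give convexity termwise. One has $\frac{d^2}{ds^2}\frac{2s^2}{s^2+c^2}=\frac{4c^2(c^2-3s^2)}{(s^2+c^2)^3}$, so each summand is concave for $s>c/\sqrt3$. Since $t$ ranges over all of $(0,\infty)$, there is no restricted ``relevant range'' to fall back on. The series does prove that $\psi$ is increasing. For convexity, differentiate directly, as the paper does: $\psi''(s)=\frac14\operatorname{csch}^2\frac s2\,\bigl(s\coth\frac s2-2\bigr)>0$, because $\frac s2\coth\frac s2>1$ for $s>0$.

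Second, if you verify the two-variable majorization by hand, raise $z$ to $z'=x+w-y$, which lies in $[z,w]$. Then $\psi(ty)+\psi(tz')\leq\psi(tx)+\psi(tw)$ by convexity, and $\psi(tz)\leq\psi(tz')$ by monotonicity. Do not instead lower $x$ to $y+z-w$. That number can be negative, for instance when $d$ is large, and the even function $\psi$ is not increasing there.
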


\begin{proof}
Set $\psi_t(s):=\frac{s}{1-e^{-ts}},~s>0$.
Differentiating \eqref{eq:H} gives
$H_A'(t)=\psi_t(x)+\psi_t(w)-\psi_t(y)-\psi_t(z)$.
For $u>0$, the elementary inequalities
$ \frac1u+\frac12\leq\frac1{1-e^{-u}}\leq\frac1u+1$
imply
$\frac1t+\frac{s}{2}\leq\psi_t(s)\leq\frac1t+s$.
Applying the upper estimate to the positive terms and the lower estimate to
the negative terms yields $H_A'(t)\leq M$.
For the lower estimate, write
\[
 \psi_t(s)=\frac1t+\frac{s}{2}+k_t(s),
 \qquad
 k_t(s)=\frac1t\left\{\frac{ts}{2}\coth\frac{ts}{2}-1\right\}.
\]
The function $k_t$ is increasing and convex on $(0,\infty)$.  Indeed, for
$K(u)=\frac{u}{2}\coth\frac{u}{2}-1$, one has
\[
 K'(u)=\frac12\coth\frac{u}{2}
       -\frac{u}{4}\operatorname{csch}^2\frac{u}{2}>0
\]
and
\[
 K''(u)=\frac14\operatorname{csch}^2\frac{u}{2}
        \left(u\coth\frac{u}{2}-2\right)>0.
\]
Now $w>\max(y,z)$ and $x+w>y+z$, so the pair $(x,w)$ weakly
majorizes $(y,z)$ from above.  The standard characterization of weak
majorization of vectors by increasing convex functions
\cite[Chapter~3.A]{MarshallOlkinArnold2011} therefore gives
$ k_t(x)+k_t(w)\geq k_t(y)+k_t(z)$.
Consequently,
\[
 H_A'(t)\geq\frac{x+w-y-z}{2}=\frac{\Delta}{2}>0.
\]
\end{proof}

We now use the derivative estimates in Lemma~\ref{lem:Hmonotone} to obtain
explicit bounds for the transition exponent.

\begin{theorem}
\label{thm:thresholdbounds}
Let $A$ be a normalized $3\times3$ totally positive matrix satisfying
$xw<yz$.  Its unique transition exponent satisfies
\begin{equation}\label{eq:thresholdbounds}
 \frac{\delta}{M}\leq t_A
 \leq\min\left\{1,\frac{2\delta}{\Delta}\right\}.
\end{equation}
Equivalently,
\[
 \frac{\displaystyle\log\frac{\log b\,\log c}
 {\log a\,\log d}}
 {\displaystyle\log a+\log d-\frac{1}{2}(\log b+\log c)}
 \leq t_A
\]
and
\[
 t_A\leq
 \min\left\{1,
 \frac{\displaystyle2\log\frac{\log b\,\log c}
 {\log a\,\log d}}
 {\displaystyle\log a+\log d-\log b-\log c}
 \right\}.
\]
\end{theorem}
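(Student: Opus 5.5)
The plan is to apply the mean value theorem to the logarithmic determinant ratio $H_A$ on the interval $(0,t_A]$, using the derivative bounds of Lemma~\ref{lem:Hmonotone}. Extend $H_A$ continuously to $t=0$ by setting $H_A(0):=-\delta$; this is justified by the limit computed just before the lemma. The function $H_A$ is smooth on $(0,\infty)$ and continuous on $[0,\infty)$. By \eqref{eq:factorization}, the zero $t_A$ of $F_A$ from Corollary~\ref{cor:threshold} is also a zero of $H_A$. Since $H_A$ is strictly increasing, it is in fact the unique zero of $H_A$ on $(0,\infty)$.

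The two-sided bound then follows from the mean value theorem. On $[0,t_A]$ it gives
\[
 \delta=H_A(t_A)-H_A(0)=t_A\,H_A'(\xi)
\]
for some $\xi\in(0,t_A)$. Equivalently, one can integrate $H_A'$ from $\varepsilon$ to $t_A$ and let $\varepsilon\downarrow0$, which avoids any worry about differentiability at $0$. Inserting $\Delta/2\le H_A'(\xi)\le M$ from \eqref{eq:Hprimebounds} yields
\[
 \frac{\Delta}{2}\,t_A\le\delta\le M\,t_A,
\]
so that $\delta/M\le t_A\le 2\delta/\Delta$. The bound $t_A\le1$ (in fact $t_A<1$) is already contained in Corollary~\ref{cor:threshold}, since $F_A(1)>0$ by \eqref{eq:detpositive}. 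Combining the two upper bounds gives the minimum in \eqref{eq:thresholdbounds}.

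The second, "equivalently" form is obtained by substituting $x=\log a$, $y=\log b$, $z=\log c$, $w=\log d$ into the definitions of $\delta$, $\Delta$ and $M$. Positivity of $\delta$, $\Delta$ and $M$ comes from the hypothesis $xw<yz$ and from \eqref{eq:TPconditions}. In particular, $x+w>y+z$ follows from $ad>bc$, and $x,y,z,w>0$ follows from $a,b,c,d>1$, so all the logarithms and divisions are legitimate.

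The only point needing care is the endpoint $t=0$, where $H_A$ is defined only as a limit. The $\varepsilon$-integration version handles this cleanly: it gives $H_A(t_A)-H_A(\varepsilon)=\int_\varepsilon^{t_A}H_A'$, which lies between $(t_A-\varepsilon)\Delta/2$ and $(t_A-\varepsilon)M$. Letting $\varepsilon\downarrow0$ then finishes the argument. I expect no real obstacle beyond this, since Lemma~\ref{lem:Hmonotone} carries all the analytic content.
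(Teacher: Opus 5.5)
Your proposal is correct and is essentially the paper's proof: both write $\delta=H_A(t_A)-H_A(0+)=\int_0^{t_A}H_A'(s)\,ds$, insert the bounds $\Delta/2\le H_A'\le M$ from Lemma~\ref{lem:Hmonotone}, and take $t_A<1$ from Corollary~\ref{cor:threshold}. Your $\varepsilon$-truncation of the integral only spells out the endpoint $t=0$ more carefully than the paper does.
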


\begin{proof}
Since $H_A(t_A)=0$ and $H_A(0+)=-\delta$, integration gives
$ \delta=\int_0^{t_A}H_A'(s)\,ds$.
Using \eqref{eq:Hprimebounds}, we obtain
$\frac{\Delta}{2}t_A\leq\delta\leq Mt_A$.
This gives the two analytic bounds in \eqref{eq:thresholdbounds}; the additional
upper bound $t_A<1$ follows from Corollary~\ref{cor:threshold}.
\end{proof}

Lemma~\ref{lem:Hmonotone} also gives a stable numerical method for computing
$t_A$.  Applying bisection to $H_A$ on the interval in
\eqref{eq:thresholdbounds} converges to the unique root.  The bounds do not
change under positive diagonal scaling, because $a,b,c,d$ are fixed by the
normalized representative.

We next show that these thresholds are not only bounded: every value in the
allowed interval actually occurs.

\section{Exact realization of all order-three thresholds}

We now show that every number in $[0,1)$ is the Hadamard threshold of some
$3\times3$ totally positive matrix.  We construct a family of matrices
$A_s$, where $s\in(0,1)$, such that $\tau(A_s)=s$.  We first prove an
inequality that will be used to show that these matrices are totally positive.
Fix two numbers
%\begin{equation}\label{eq:xr}
$ 0<x<r$
%\end{equation}
and, for each $s\in(0,1)$, define
\begin{equation}\label{eq:ws}
 w_s:=\frac1s\log\left(
 1+\frac{(e^{sr}-1)^2}{e^{sx}-1}\right).
\end{equation}
Consider the symmetric normalized matrix
\begin{equation}\label{eq:As}
 A_s:=
 \begin{pmatrix}
  1&1&1\\
  1&e^x&e^r\\
  1&e^r&e^{w_s}
 \end{pmatrix}.
\end{equation}

\begin{lemma}\label{lem:ws}
For $0<x<r$ and $0<s<1$, it follows that
\begin{equation}\label{eq:wsgap}
 w_s>2r-x>r.
\end{equation}
\end{lemma}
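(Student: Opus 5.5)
The inequality $2r-x>r$ is just $r>x$, so the content is $w_s>2r-x$. Since $s>0$ and $\exp$ is increasing, multiply by $s$ and exponentiate. Then $w_s>2r-x$ is equivalent to
\[
 1+\frac{(e^{sr}-1)^2}{e^{sx}-1}>e^{s(2r-x)}.
\]
Introduce $u:=e^{sx}$ and $v:=e^{sr}$. Because $0<x<r$ and $s>0$, we have $1<u<v$. Multiplying by $u-1>0$ and then by $u>0$, the target becomes
\[
 u(u-1)+u(v-1)^2>v^2(u-1).
\]

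Expand both sides. The left side is $u^2-u+uv^2-2uv+u$, which simplifies to $u^2+uv^2-2uv$. The right side is $uv^2-v^2$. After cancelling $uv^2$, the inequality reduces to
\[
 u^2-2uv+v^2=(v-u)^2>0.
\]
This holds strictly because $u\neq v$. Every step above is an equivalence, since we only multiplied by positive quantities and applied strictly monotone maps. So $w_s>2r-x$, and together with $2r-x>r$ this gives \eqref{eq:wsgap}.

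There is no real obstacle here. The only care needed is to check the signs when clearing denominators ($u-1>0$ needs $x>0$ and $s>0$) and to notice the perfect square. The hypothesis $s<1$ is not used; it matters only later for the threshold.
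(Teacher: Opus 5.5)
Your proof is correct and is essentially the paper's own argument. The paper makes the same substitution $u=e^{sx}$, $v=e^{sr}$ and reduces the claim to $1+(v-1)^2/(u-1)>v^2/u$ via the perfect-square identity $u(v-1)^2-(u-1)(v^2-u)=(v-u)^2>0$, which you recover by clearing denominators (and you are right that $s<1$ is not used).
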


\begin{proof}
Put $u=e^{sx}$ and $v=e^{sr}$, so $1<u<v$.  The elementary identity
\[
 u(v-1)^2-(u-1)(v^2-u)=(v-u)^2>0
\]
gives $ \frac{(v-1)^2}{u-1}>\frac{v^2}{u}-1$.
Applying \eqref{eq:ws}, gives
$ e^{sw_s}>e^{s(2r-x)}$,
and hence $w_s>2r-x>r$.
\end{proof}

\begin{theorem}[Realization theorem]\label{thm:realization}
For every $s\in(0,1)$, the matrix $A_s$ in \eqref{eq:As} is totally positive
and
\begin{equation}\label{eq:taus}
 \tau(A_s)=s.
\end{equation}
Consequently,
$\{\tau(A):A\text{ is a }3\times3\text{ totally positive matrix}\}
 =[0,1)$.

\end{theorem}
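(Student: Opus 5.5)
Our plan is to verify the hypotheses of Corollary~\ref{cor:threshold} for $A_s$ and to check that $s$ is exactly its transition exponent. In the notation of Section~4, $A_s$ has $a=e^x$, $b=c=e^r$, $d=e^{w_s}$, so the logarithmic data are $(x,y,z,w)=(x,r,r,w_s)$.

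\emph{Total positivity.} The first step is to check the conditions \eqref{eq:TPconditions} and \eqref{eq:detpositive}, and the remaining $2\times2$ minors that involve the first row or column. All entries are positive. The $2\times2$ minors from the first two rows are $e^x-1>0$, $e^r-1>0$ and $e^r-e^x>0$, since $0<x<r$. The minors from rows $1,3$ are $e^r-1$, $e^{w_s}-1$ and $e^{w_s}-e^r$, and the last is positive because $w_s>r$ by Lemma~\ref{lem:ws}. The minors from rows $2,3$ with columns $\{1,2\}$ and $\{1,3\}$ are $e^r-e^x>0$ and $e^{w_s}-e^r>0$. The minor with columns $\{2,3\}$ is $e^{x+w_s}-e^{2r}$, which is positive because $w_s>2r-x$ (Lemma~\ref{lem:ws} again). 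For the determinant we use the factorization \eqref{eq:factorization} at $t=1$:
\[
\det A_s=F_{A_s}(1)=(e^x-1)(e^{w_s}-1)-(e^r-1)^2 .
\]
Rather than checking the sign at $t=1$ directly, we locate all zeros of $F_{A_s}$ first.

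\emph{Threshold.} By the definition \eqref{eq:ws}, $e^{sw_s}-1=(e^{sr}-1)^2/(e^{sx}-1)$. Hence
\[
F_{A_s}(s)=(e^{sx}-1)(e^{sw_s}-1)-(e^{sr}-1)^2=0 .
\]
Next we show that $xw_s<r^2$, which places $A_s$ in case (ii) of Corollary~\ref{cor:threshold}. This is the main obstacle. The approach is to consider the auxiliary function
\[
G(t)=(e^{tx}-1)(e^{tw_s}-1)-(e^{tr}-1)^2 ,
\]
which equals $F_{A_s}$ and has the Laguerre structure of Theorem~\ref{thm:classification}, with at most three real zeros counted with multiplicity. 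Since the $2\times2$ conditions are already established, the sign-change count applies. Suppose $xw_s\ge r^2$. Then the proof of Theorem~\ref{thm:classification} shows that the zeros of $G$ consist of the double (or triple) zero at $0$ and nothing else, contradicting $G(s)=0$ with $s>0$. Therefore $xw_s<r^2$.

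Now $G(t)\to+\infty$ as $t\to\infty$, and $G<0$ just to the right of $0$ because $xw_s<r^2$. Laguerre's rule then shows that $s$ is the unique positive zero, that it is simple, and that $G>0$ on $(s,\infty)$. In particular $\det A_s=G(1)>0$ since $s<1$, so $A_s$ is totally positive. Corollary~\ref{cor:threshold}(ii) then gives $\tau(A_s)=t_{A_s}=s$.

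\emph{Range.} Corollary~\ref{cor:threshold} shows that every $3\times3$ totally positive matrix has $\tau\in[0,1)$; the normalization preserves $\tau$, since diagonal scaling preserves the signs of all minors of every Hadamard power. The family $A_s$ realizes every $s\in(0,1)$. The value $0$ is realized by any totally positive matrix satisfying \eqref{eq:classification}, for instance by choosing $w$ large with $x=y=z$ replaced by suitable values such as $\log a\log d\ge(\log b)^2$; one explicit example is enough. This gives the equality of the two sets.
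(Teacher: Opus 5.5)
Your proof is correct, but at the key step it takes a different route from the paper.

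\emph{The paper's route.} It works with the logarithmic ratio $H_s(t)=\log\frac{(e^{tx}-1)(e^{tw_s}-1)}{(e^{tr}-1)^2}$. The inequalities $w_s>r$ and $x+w_s>2r$ from Lemma~\ref{lem:ws} are exactly the hypotheses of Lemma~\ref{lem:Hmonotone}, so $H_s$ is strictly increasing. Together with $H_s(s)=0$, this gives at once negativity on $(0,s)$, positivity on $(s,\infty)$, and hence $\det A_s>0$ because $s<1$. The sign of $xw_s-r^2$ never has to be found separately; it is forced by $H_s(0+)<0$.

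\emph{Your route.} You use the same two inequalities only to fix the exponent ordering $x<r<\min(w_s,2r)\le\max(w_s,2r)<x+w_s$, with sign pattern $-,+,-,+$. You then run the Laguerre count of Theorem~\ref{thm:classification} on $G$ before knowing that $A_s$ is totally positive. This is legitimate. When $xw>yz$, that proof uses only the exponent ordering and the signs near $0^+$ and at $+\infty$; when $xw=yz$, it uses only the multiplicity at the origin. So $G(s)=0$ does force $xw_s<r^2$. Strictly speaking, the argument rules out only positive zeros: when $xw_s>r^2$, $G$ in fact has a negative zero. Positive zeros are all you need, though.

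\emph{What each buys.} Your approach avoids the convexity and weak-majorization input of Lemma~\ref{lem:Hmonotone} and stays entirely within zero counting. The paper's approach is shorter at this point, and it reuses the monotonicity that also drives the bounds in Theorem~\ref{thm:thresholdbounds} and the continuity argument.

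\emph{The range.} Deducing $\tau<1$ from Corollary~\ref{cor:threshold} plus invariance under diagonal scaling is a fine substitute for the paper's appeal to the critical-exponent theorem. Your sentence producing a zero-threshold example is garbled, however. Just exhibit one, e.g.\ $a=e$, $b=c=e^2$, $d=e^5$, where $1\cdot 5>2^2$. Alternatively, note that your own Laguerre argument shows that any $0<x<r<w$ with $x+w>2r$ and $xw\ge r^2$ yields a totally positive matrix with $\tau=0$.
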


\begin{proof}
For the parameters of $A_s$, the logarithmic determinant ratio is
\[
 H_s(t)=\log\frac{(e^{tx}-1)(e^{tw_s}-1)}{(e^{tr}-1)^2}.
\]
The definition of $w_s$ gives $H_s(s)=0$.  By Lemma~\ref{lem:ws},
$w_s>r$ and$ x+w_s>2r$,
so Lemma~\ref{lem:Hmonotone} shows that $H_s$ is strictly increasing.  Since
$s<1$, it follows that $H_s(1)>0$.  The entries and all minors of order two of
$A_s$ are positive, while
$ \det A_s=(e^x-1)(e^{w_s}-1)-(e^r-1)^2>0$
because $H_s(1)>0$.  Thus $A_s$ is totally positive.

Strict monotonicity also gives $H_s(t)<0$ for $0<t<s$ and $H_s(t)>0$ for
$t>s$.  Hence $A_s^{\circ t}$ fails to be totally positive below $s$ and is
totally positive above $s$, proving \eqref{eq:taus}.

We have shown that every $s\in(0,1)$ occurs.  The value $0$ also occurs; for
example, choose $0<x<r<w$ with $x+w>2r$ and $xw\geq r^2$, and apply
Theorem~\ref{thm:classification}.  Finally, the order-three critical-exponent
theorem implies $\tau(A)<1$ for every totally positive $A$, so no other values
occur.
\end{proof}

The family $A_s$ also explains what happens at both endpoints.  As
$s\downarrow0$,
\begin{equation*}\label{eq:wlimit0}
 w_s\longrightarrow\frac{r^2}{x},
\end{equation*}
so $xw_s\to r^2$ and the matrices approach the boundary between eternal and
non-eternal total positivity.  As $s\uparrow1$,
\begin{equation*}\label{eq:wlimit1}
 w_s\longrightarrow
 \log\left(1+\frac{(e^r-1)^2}{e^x-1}\right),
\end{equation*}
and the limiting matrix has determinant zero.  Thus, along this family,
thresholds approaching $1$ are obtained while the matrices approach the
boundary of the totally positive cone.

Thus the threshold is not only contained in $[0,1)$; it takes every value in
this interval, already within a two-parameter family of symmetric normalized
matrices.

\section{Structural properties and stability of the threshold}

We first record how the threshold behaves under the standard transformations
of totally positive matrices.  The same statements hold for $\tau_r$.

\begin{proposition}[Transformation laws]\label{prop:transformations}
Let $A$ be a matrix with positive entries.
\begin{enumerate}[label=\textup{(\roman*)}]
 \item If $D_1,D_2$ are positive diagonal matrices of compatible sizes, then
 $ \tau(D_1AD_2)=\tau(A)$.

 \item Further,
  $\tau(A^{\mathsf T})=\tau(A)$.

 \item If $J_m,J_n$ are the reverse identity matrices, then
 $ \tau(J_mAJ_n)=\tau(A)$.

 \item For every $s>0$,
 \begin{equation}\label{eq:scalinglaw}
  \tau(A^{\circ s})=\frac{\tau(A)}{s},
 \end{equation}
 with the convention that $\infty/s=\infty$.
\end{enumerate}
\end{proposition}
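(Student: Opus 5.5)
The plan is to reduce every part to one observation: each transformation either commutes with Hadamard powers or is compatible with them, and it preserves the set of exponents at which the power is $\TP$. The key tool is the positivity set $\mathcal P(A)=\{t>0:A^{\circ t}\in\TP\}$. By definition, $\tau(A)$ depends only on this set:
\[
\tau(A)=\inf\{T\ge0:(T,\infty)\subseteq\mathcal P(A)\}.
\]
For (i)--(iii) it therefore suffices to show $\mathcal P(B)=\mathcal P(A)$ for the transformed matrix $B$. For (iv) it suffices to show $\mathcal P(A^{\circ s})=s^{-1}\mathcal P(A)$. The same reasoning applies verbatim to $\tau_r$ once $\TP$ is replaced by $\TP_r$, since each step below acts minor-by-minor and preserves minor orders.

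For (i), we use $(D_1AD_2)^{\circ t}=D_1^{\circ t}A^{\circ t}D_2^{\circ t}$, as already noted before the standard normalization. Each minor of the right-hand side is the corresponding minor of $A^{\circ t}$ multiplied by the positive product of the relevant diagonal entries raised to the power $t$. Hence the signs agree and $\mathcal P(D_1AD_2)=\mathcal P(A)$.

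For (ii), we use $(A^{\mathsf T})^{\circ t}=(A^{\circ t})^{\mathsf T}$. A matrix and its transpose have the same minors up to relabelling, so the two positivity sets coincide.

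For (iii), we again have $(J_mAJ_n)^{\circ t}=J_m A^{\circ t}J_n$, because the reversal only permutes entries. The minor of $J_mBJ_n$ on rows $I$ and columns $K$ equals the minor of $B$ on the reversed index sets. Reversing both the rows and the columns of a $k\times k$ submatrix multiplies its determinant by $(\pm1)^2=1$: reversing rows contributes the sign of the reversal permutation, and reversing columns contributes the same sign. Thus $J_mBJ_n$ is $\TP$ exactly when $B$ is, giving $\mathcal P(J_mAJ_n)=\mathcal P(A)$. This sign bookkeeping is the only step needing a little care, and it is where I expect the only (minor) obstacle. One must note that the two sign factors are identical because both reversals act on index sets of the same size $k$.

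For (iv), we use $(A^{\circ s})^{\circ t}=A^{\circ st}$, which holds since the entries are positive. Hence $t\in\mathcal P(A^{\circ s})$ if and only if $st\in\mathcal P(A)$, i.e.\ $\mathcal P(A^{\circ s})=s^{-1}\mathcal P(A)$. It follows that $(T,\infty)\subseteq\mathcal P(A^{\circ s})$ if and only if $(sT,\infty)\subseteq\mathcal P(A)$. Taking infima over $T\ge0$ gives $\tau(A^{\circ s})=\tau(A)/s$. If no admissible $T$ exists for $A$, then none exists for $A^{\circ s}$ either, so both thresholds are $\infty$, consistent with the stated convention.
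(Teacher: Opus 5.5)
Your proof is correct and follows essentially the same route as the paper: each transformation is checked minor by minor to preserve the sign of every minor of every Hadamard power (with the reversal sign $(-1)^{k(k-1)/2}$ appearing twice in (iii)), and (iv) rests on $(A^{\circ s})^{\circ t}=A^{\circ st}$. Organizing everything through the positivity set, with $\mathcal P(A^{\circ s})=s^{-1}\mathcal P(A)$, is only a cleaner bookkeeping of the same argument.
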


\begin{proof}
For (i), use
$ (D_1AD_2)^{\circ t}=D_1^{\circ t}A^{\circ t}D_2^{\circ t}$.
Every minor on the right is multiplied by a positive row factor and a positive
column factor, so its sign is unchanged.  Part (ii) follows because the minors
of $A^{\mathsf T}$ are the transposes of the corresponding minors of $A$.
For (iii), reversing $k$ selected rows and $k$ selected columns contributes the
same sign $(-1)^{k(k-1)/2}$ twice, so every minor retains its sign.  Finally,
$ (A^{\circ s})^{\circ t}=A^{\circ st}$.
Thus all powers of $A^{\circ s}$ with $s\geq T$ are totally positive exactly when
all powers of $A$ with exponent $st>sT$ are totally positive, which proves
\eqref{eq:scalinglaw}.
\end{proof}

\begin{example}
Consider
\[
 A=\begin{pmatrix}
  1&1&1\\
  1&e&e^2\\
  1&e^2&e^5
 \end{pmatrix}.
\]
The inequalities in \eqref{eq:TPconditions} and \eqref{eq:detpositive} show
that $A$ is totally positive.  If $J_3$ is the reverse identity matrix, then
\[
 J_3AJ_3=
 \begin{pmatrix}
  e^5&e^2&1\\
  e^2&e&1\\
  1&1&1
 \end{pmatrix}.
\]
Proposition~\ref{prop:transformations}\textup{(iii)} gives
$\tau(J_3AJ_3)=\tau(A)$.  In fact, $A$ is eternally totally positive because
\[(\log e)(\log e^5)=5>4=(\log e^2)^2,\] so both thresholds are zero.
\end{example}

The next proposition shows that passing to a submatrix cannot increase the
Hadamard threshold.

\begin{proposition}\label{prop:submatrix}
If $B$ is a submatrix of $A$, then
$\tau_r(B)\leq\tau_r(A)$
whenever both sides are defined using the minors available in the respective
matrix sizes.  In particular, for a square submatrix $B$,
$\tau(B)\leq\tau(A)$.
\end{proposition}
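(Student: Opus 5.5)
The plan is to compare the sets over which the two infima are taken. For a matrix $C$ with positive entries, let
\[
 S_r(C):=\bigl\{T\geq0: C^{\circ t}\in\TP_r\text{ for every }t>T\bigr\},
\]
so that $\tau_r(C)=\inf S_r(C)$. I will show $S_r(A)\subseteq S_r(B)$. Once this inclusion holds, the infimum over the larger set is no larger, so $\tau_r(B)\leq\tau_r(A)$. This also covers the convention $\inf\varnothing=\infty$: if $S_r(A)$ is empty, the inequality is trivial.

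The key observation is that forming Hadamard powers commutes with taking submatrices. Suppose $B=A[\alpha,\beta]$ for row and column index sets $\alpha,\beta$. Then $B^{\circ t}=A^{\circ t}[\alpha,\beta]$ for every $t>0$. Every minor of $B^{\circ t}$ of order $k\leq r$ is
\[
 \det A^{\circ t}[\gamma,\delta]
\]
for some $\gamma\subseteq\alpha$ and $\delta\subseteq\beta$ with $|\gamma|=|\delta|=k$, so it is also a minor of $A^{\circ t}$ of the same order. Therefore, if $A^{\circ t}\in\TP_r$, then $B^{\circ t}\in\TP_r$. Applying this to every $t>T$ shows that $T\in S_r(A)$ implies $T\in S_r(B)$, which is the inclusion claimed above.

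For the case of full total positivity, $\TP$ is the same as $\TP_r$ with $r$ at least the size of the matrix. If $B$ is a square submatrix of $A$, I take $r$ large enough for both matrices, or equivalently use the fact that every minor of $B^{\circ t}$ of any order is a minor of $A^{\circ t}$. The same argument then gives $\tau(B)\leq\tau(A)$.

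I do not expect a genuine obstacle. The only point needing care is bookkeeping: the phrase ``minors available in the respective matrix sizes'' means that minors of order up to $r$ in $B$ exist as minors of order up to $r$ in $A$. This is immediate, because $B$ has no more rows or columns than $A$.
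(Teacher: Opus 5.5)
Your proof is correct and matches the paper's argument: every minor of $B^{\circ t}=A^{\circ t}[\alpha,\beta]$ is a minor of $A^{\circ t}$ of the same order, so $A^{\circ t}\in\TP_r$ forces $B^{\circ t}\in\TP_r$, and the threshold inequality follows. Your inclusion $S_r(A)\subseteq S_r(B)$ simply makes explicit the infimum step, including the $\inf\varnothing=\infty$ convention, which the paper leaves implicit.
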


\begin{proof}
Every minor of $B^{\circ t}$ is also a minor of $A^{\circ t}$.  Hence total
positivity of $A^{\circ t}$ implies total positivity of $B^{\circ t}$, and the
threshold inequality follows.
\end{proof}

We now prove continuity in order three.  Let $\mathcal D_3$ be the open subset
of $\mathbb R^4$ consisting of the parameter quadruples $(a,b,c,d)$ associated
with the normalized totally positive matrices in \eqref{eq:normalized}.  We
give $\mathcal D_3$ the topology induced from $\mathbb R^4$.

\begin{theorem}[Continuity in order three]\label{thm:continuity}
The map
$\tau:\mathcal D_3\longrightarrow[0,1)$
is continuous.
\end{theorem}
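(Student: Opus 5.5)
Continuity will come from splitting $\mathcal D_3$ by the sign of $g(a,b,c,d):=(\log a)(\log d)-(\log b)(\log c)$, which is continuous on $\mathcal D_3$ because all four parameters exceed $1$. By Corollary~\ref{cor:threshold}, $\tau=0$ on the closed set $\{g\ge0\}$ and $\tau=t_A$ on the open set $U:=\{g<0\}$. Three things must therefore be shown: that $t_A$ is continuous on $U$, that $\tau(A_n)\to0$ whenever $A_n\in U$ converges to a point $A$ with $g(A)=0$, and that continuity holds trivially at interior points of $\{g>0\}$.

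For continuity on $U$, I will use the logarithmic determinant ratio $H_A$ from \eqref{eq:H}, regarded as a function of $(t;x,y,z,w)$. On $(0,\infty)\times\mathcal D_3$ it is jointly smooth, since every factor $e^{tu}-1$ is positive there. Its zero set in $t$ is exactly $\{t_A\}$, and by Lemma~\ref{lem:Hmonotone} we have $\partial_tH_A\ge\Delta/2>0$. The implicit function theorem then gives that $A\mapsto t_A$ is $C^1$, and in particular continuous, on $U$. If one prefers to avoid differentiability, an elementary argument also works: fix $A_0\in U$ and $\varepsilon>0$. Strict monotonicity gives $H_{A_0}(t_{A_0}-\varepsilon)<0<H_{A_0}(t_{A_0}+\varepsilon)$. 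By continuity of $H$ in the parameters at these two fixed times, the same strict inequalities hold for nearby $A$, so $t_A\in(t_{A_0}-\varepsilon,t_{A_0}+\varepsilon)$.

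The main step is the boundary behaviour where $g=0$, and here the quantitative bound of Theorem~\ref{thm:thresholdbounds} does the work. For $A\in U$ it gives
\[
0<t_A\le \frac{2\delta}{\Delta}, \qquad \delta=\log\frac{yz}{xw},\quad \Delta=x+w-y-z .
\]
Let $A_n\to A$ with $A_n\in U$ and $g(A)=0$. Then $x_nw_n\to xw=yz>0$ and $y_nz_n\to yz$, so $\delta_n\to0$. Moreover $\Delta_n\to\Delta(A)>0$, because $ad>bc$ on $\mathcal D_3$. Hence $t_{A_n}\to0=\tau(A)$. Points of $\mathcal D_3$ near $A$ that lie in $\{g\ge0\}$ already have $\tau=0$, so $\tau$ is continuous at $A$.

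Finally, points with $g>0$ lie in an open set on which $\tau\equiv0$, so continuity there is immediate. Combining the three cases gives continuity of $\tau$ on all of $\mathcal D_3$. The only delicate point is the boundary limit, and it is resolved by the fact that $\Delta$ stays bounded away from $0$ near any point of the open cone $\mathcal D_3$.
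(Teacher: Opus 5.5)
Your proposal is correct and follows essentially the same route as the paper. It uses the implicit function theorem on the open region $xw<yz$ (justified by $\partial_t H_A\geq\Delta/2>0$ from Lemma~\ref{lem:Hmonotone}), the zero threshold on $xw\geq yz$, and, at the separating hypersurface, the bound $t_A\leq 2\delta/\Delta$ from Theorem~\ref{thm:thresholdbounds} with $\delta\to0$ and $\Delta$ bounded away from zero; your optional bracketing argument and your handling of nearby points in either region are only minor refinements.
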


\begin{proof}
It is convenient to use logarithmic coordinates $(x,y,z,w)$.  Set
$Q(x,y,z,w):=xw-yz$.
In the open region $Q<0$, Corollary~\ref{cor:threshold} \textup{(ii)} shows
that the threshold is the unique positive solution of
$ H_A(t)=0$,
and Lemma~\ref{lem:Hmonotone} gives $\partial H_A/\partial t>0$.  The implicit
function theorem therefore shows that $t_A=\tau(A)$ depends continuously (in
fact, smoothly) on $(x,y,z,w)$ throughout this region.

In the region $Q>0$, Theorem~\ref{thm:classification} gives $\tau(A)=0$, so
continuity is immediate.  It remains to consider a point with $Q=0$.  If
$A_n\to A$ and $Q(A_n)<0$, then
\[
 0\leq\tau(A_n)\leq\frac{2\delta_n}{\Delta_n},
 \qquad
 \delta_n=\log\frac{y_nz_n}{x_nw_n}.
\]
Here $\delta_n\to0$, while
$\Delta_n=x_n+w_n-y_n-z_n
 \longrightarrow x+w-y-z>0$
because $A$ is totally positive.  Hence $\tau(A_n)\to0=\tau(A)$.  Sequences
eventually satisfying $Q(A_n)\geq0$ already have zero threshold.  This proves
continuity at the separating hypersurface and completes the proof.
\end{proof}

\begin{remark}
The theorem concerns the interior of the totally positive cone.  If the
definition of $\tau$ is retained for all matrices with positive entries, the
family in Theorem~\ref{thm:realization} has a boundary limit with threshold
$1$.  Thus the missing endpoint of the range $[0,1)$ is naturally attained on
the boundary where the determinant at $t=1$ vanishes.
\end{remark}

Hence the order-three threshold is unchanged by the natural equivalences,
satisfies the expected power-scaling rule, and varies continuously inside the
totally positive cone.

\section{Logarithmic cores and the higher-order reduction}

We next introduce a quantity that is unchanged by diagonal scaling and
controls a minor near $t=0$.  Let $B=(b_{ij})_{i,j=1}^k$ have positive
entries.  Its normalized form is
\begin{equation*}\label{eq:normalizedB}
 N(B)=(n_{ij})_{i,j=1}^k,
 \qquad
 n_{ij}:=\frac{b_{11}b_{ij}}{b_{i1}b_{1j}}.
\end{equation*}
The first row and first column of $N(B)$ consist of ones.  Define the
\emph{logarithmic core} of $B$ by
\begin{equation*}\label{eq:logcore}
 \Lambda(B):=
 \left(
 \log\frac{b_{11}b_{ij}}{b_{i1}b_{1j}}
 \right)_{i,j=2}^k.
\end{equation*}
This is a $(k-1)\times(k-1)$ matrix and is unchanged by positive diagonal
scaling of $B$.

The logarithmic core determines the leading behavior of a minor as the
Hadamard exponent approaches zero.

\begin{proposition}\label{prop:zeroexpansion}
For every $k\times k$ matrix $B$ with positive entries, there is a positive
function $c_B(t)$ such that
\begin{equation}\label{eq:minorExpansion}
 \det(B^{\circ t})
 =c_B(t)\bigl\{t^{k-1}\det\Lambda(B)+O(t^k)\bigr\}
 \qquad(t\downarrow0).
\end{equation}
In particular, the first potentially nonzero coefficient after normalization
is $\det\Lambda(B)$.
\end{proposition}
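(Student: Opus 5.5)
We reduce to the normalized form by positive diagonal scaling and then perform row and column differences. Write $B=D_1N(B)D_2$ with $D_1=\operatorname{diag}(b_{i1}/b_{11})$ and $D_2=\operatorname{diag}(b_{1j})$. Then
\[
B^{\circ t}=D_1^{\circ t}N(B)^{\circ t}D_2^{\circ t},
\]
so
\[
\det(B^{\circ t})=\det(D_1)^t\det(D_2)^t\det(N(B)^{\circ t}).
\]
We take $c_B(t):=\bigl(\det D_1\det D_2\bigr)^t>0$. It therefore suffices to show
\[
\det(N(B)^{\circ t})=t^{k-1}\det\Lambda(B)+O(t^k).
\]

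Put $\lambda_{ij}=\log n_{ij}$ for $i,j\ge1$. Since the first row and column of $N(B)$ consist of ones, $\lambda_{1j}=\lambda_{i1}=0$, and $(\lambda_{ij})_{i,j\ge2}=\Lambda(B)$. The matrix $N(B)^{\circ t}$ has entries $e^{t\lambda_{ij}}$ with first row and first column all ones. For each $i\ge2$, subtract row $1$ from row $i$. This leaves the determinant unchanged and makes the first column $(1,0,\dots,0)^{\mathsf T}$. Laplace expansion along the first column then gives
\[
\det(N(B)^{\circ t})=\det\bigl(e^{t\lambda_{ij}}-1\bigr)_{i,j=2}^k .
\]
Every entry of this $(k-1)\times(k-1)$ matrix is $t\lambda_{ij}+t^2r_{ij}(t)$, where $r_{ij}$ is bounded for small $t$ (indeed entire in $t$). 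Factoring $t$ out of each of the $k-1$ rows gives
\[
t^{k-1}\det\bigl(\Lambda(B)+tR(t)\bigr),
\]
with $R(t)=(r_{ij}(t))$ bounded.

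The determinant is a polynomial in the entries, so
\[
\det(\Lambda(B)+tR(t))=\det\Lambda(B)+O(t)
\]
as $t\downarrow0$. Multiplying by $t^{k-1}$ yields the expansion in Proposition~\ref{prop:zeroexpansion}. The final remark follows: after dividing by the positive factor $c_B(t)$, the function is $t^{k-1}$ times a function that is analytic at $0$. Its coefficients of $t^0,\dots,t^{k-2}$ therefore vanish, and the coefficient of $t^{k-1}$ is $\det\Lambda(B)$.

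The only point needing care is the interpretation of "first potentially nonzero coefficient." The argument shows that all lower-order Taylor coefficients of $\det(N(B)^{\circ t})$ vanish identically. This holds because the matrix $(e^{t\lambda_{ij}}-1)$ is divisible by $t$ entrywise, which is where the vanishing of the first row and column of the log matrix is used. The case $k=3$ recovers $F_A(t)=(xw-yz)t^2+O(t^3)$ from Theorem~\ref{thm:classification}, which serves as a consistency check.
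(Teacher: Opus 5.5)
Your proposal is correct and follows essentially the same route as the paper: positive diagonal scaling to $N(B)$, then subtracting the first row and expanding along the first column to get $\det(n_{ij}^t-1)_{i,j=2}^k$, then pulling a factor $t$ out of each row and using that the determinant is a polynomial in the entries. Your explicit choice $c_B(t)=(\det D_1\det D_2)^t$ is a small addition; it agrees with the formula for $c_B(t)$ that the paper only records later, in the Toeplitz section.
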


\begin{proof}
There exist positive diagonal matrices $D_1,D_2$ such that
$N(B)=D_1BD_2$.  Hence
\[
 \det(B^{\circ t})=c_B(t)\det(N(B)^{\circ t})
\]
for a positive scalar $c_B(t)$.  Subtract the first row of
$N(B)^{\circ t}$ from every other row and then expand along the first column.
This gives
\[
 \det(N(B)^{\circ t})
 =\det\bigl(n_{ij}^{t}-1\bigr)_{i,j=2}^k.
\]
Since
$n_{ij}^{t}-1=t\log n_{ij}+O(t^2)$,
the multilinearity of the determinant yields
\[
 \det(N(B)^{\circ t})
 =t^{k-1}\det\Lambda(B)+O(t^k),
\]
which proves \eqref{eq:minorExpansion}.
\end{proof}

\begin{corollary}[Infinitesimal obstruction]\label{cor:logobstruction}
Let $A$ be a matrix with positive entries.
\begin{enumerate}[label=\textup{(\roman*)}]
 \item If $A$ is eternally totally positive, then
 $\det\Lambda(B)\geq 0$,
 for every square submatrix $B$ of $A$.
 \item If $\det\Lambda(B)<0$ for some square submatrix $B$, then
 $A^{\circ t}$ is not totally positive for all sufficiently small $t>0$;
 consequently, $A$ is not eternally totally positive.
 \item If $\det\Lambda(B)>0$ for every square submatrix $B$ of order at least
 two, then $A^{\circ t}$ is totally positive for all sufficiently small
 $t>0$.
\end{enumerate}
\end{corollary}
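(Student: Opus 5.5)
All three parts will follow from Proposition~\ref{prop:zeroexpansion}, applied to each square submatrix $B$ of $A$ separately. Then we use that a matrix with finitely many minors is totally positive exactly when each of those finitely many minors is positive.

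\textbf{Part (i).} Suppose $A\in\ETP$ and let $B$ be a $k\times k$ submatrix. By Proposition~\ref{prop:zeroexpansion},
\[
 \det(B^{\circ t})=c_B(t)\,t^{k-1}\bigl\{\det\Lambda(B)+O(t)\bigr\}
\]
with $c_B(t)>0$. For $k=1$ the core is empty; its determinant is $1$ by convention, so there is nothing to prove. Since $B^{\circ t}$ is a submatrix of $A^{\circ t}$, the left side is positive for every $t>0$. Dividing by $c_B(t)t^{k-1}>0$ gives $\det\Lambda(B)+O(t)>0$. Letting $t\downarrow0$ yields $\det\Lambda(B)\ge0$.

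\textbf{Part (ii).} Part (ii) is the same computation read contrapositively. If $\det\Lambda(B)<0$, the bracket $\det\Lambda(B)+O(t)$ is negative for all $t\in(0,t_0)$, for some $t_0>0$. Hence $\det(B^{\circ t})<0$ on that interval. This minor of $A^{\circ t}$ then fails positivity, so $A^{\circ t}\notin\TP$ for every small $t>0$, and in particular $A\notin\ETP$.

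\textbf{Part (iii).} Here we need uniformity over all minors, and finiteness supplies it. The $1\times1$ minors $a_{ij}^t$ are always positive. For each square submatrix $B$ of order $k\ge2$, the hypothesis $\det\Lambda(B)>0$ and the expansion give some $t_B>0$ with $\det(B^{\circ t})>0$ for $0<t<t_B$. Since $A$ has only finitely many square submatrices, $t_*:=\min_B t_B>0$, and $A^{\circ t}\in\TP$ for all $0<t<t_*$.

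The only point needing care is that the $O(t^k)$ term in \eqref{eq:minorExpansion} is a genuine remainder, bounded by $Ct^k$ for small $t$, for each fixed $B$. This holds because the entries $n_{ij}^t-1$ are analytic in $t$, and the proof of Proposition~\ref{prop:zeroexpansion} expresses the determinant as a polynomial in these entries. No uniformity over $B$ is needed, since we take a minimum over finitely many thresholds. I therefore expect no serious obstacle; the corollary is a direct consequence of the expansion.
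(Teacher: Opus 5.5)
Your proposal is correct and follows the same route as the paper: you apply Proposition~\ref{prop:zeroexpansion} to each square submatrix to get (i) and (ii) from the sign of the leading coefficient $\det\Lambda(B)$. For (iii) you use the finiteness of the set of minors to obtain a common interval $(0,t_*)$. The paper states these same steps more tersely.
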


\begin{proof}
Parts (i) and (ii) follow immediately from
Proposition~\ref{prop:zeroexpansion}.  For (iii), each minor is positive for
all sufficiently small positive $t$.  Since a finite matrix has only finitely
many minors, one common interval $(0,\varepsilon)$ works for each of them.
\end{proof}

For a $2\times2$ submatrix, the logarithmic core is the scalar logarithm of
its cross ratio, and its positivity is equivalent to positivity of the minor.
For the normalized $3\times3$ matrix \eqref{eq:normalized},
\[
 \Lambda(A)=
 \begin{pmatrix}x&y\\ z&w\end{pmatrix},
 \qquad
 \det\Lambda(A)=xw-yz.
\]
Thus, in order three, the local logarithmic-core condition is also sufficient
for every $t>0$.  In higher orders we use it only as a local necessary
condition; we do not claim that it is globally sufficient.

The threshold can also be computed minor by minor.  For a square matrix
$B$ with positive entries, put
\begin{equation*}\label{eq:betaB}
 \beta(B):=\sup\{t>0:\det(B^{\circ t})\leq0\},
\end{equation*}
where the supremum of the empty set is taken to be zero.  When $A$ is totally
positive, eventual total positivity makes all these quantities finite.

Since total positivity is determined by the signs of all minors, the
threshold of a matrix can be computed from the last nonpositive exponent of
each square submatrix.

\begin{proposition}\label{prop:minorthreshold}
If $A$ is a finite totally positive matrix, then
\begin{equation}\label{eq:taumax}
 \tau(A)=\max_{B}\beta(B),
\end{equation}
where the maximum ranges over all square submatrices $B$ of $A$.
More generally,
\[\tau_r(A)=\max_{\operatorname{ord}B\leq r}\beta(B).\]

\end{proposition}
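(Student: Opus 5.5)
The plan is to prove the identity \eqref{eq:taumax} by showing the two inequalities separately, after first checking that the right-hand side is a finite maximum. Since $A$ is finite, it has finitely many square submatrices $B$, so the maximum is over a finite set. Each $\beta(B)$ is finite: by Theorem~\ref{thm:FJcritical}, $A^{\circ t}$ is totally positive for all $t$ beyond some $T_0$. Hence $\det(B^{\circ t})>0$ for $t>T_0$, and the set $\{t>0:\det(B^{\circ t})\le0\}$ is contained in $(0,T_0]$. Set $\beta^*:=\max_B\beta(B)$.

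For $\tau(A)\le\beta^*$, fix $t>\beta^*$. Then $t>\beta(B)$ for every $B$. By the definition of $\beta(B)$ as a supremum, $\det(B^{\circ t})\le0$ cannot hold, so every minor of $A^{\circ t}$ is positive. Here I use that minors of $A^{\circ t}$ are exactly the determinants $\det(B^{\circ t})$ with $B$ a square submatrix of $A$, since Hadamard powers commute with taking submatrices. Thus $A^{\circ t}\in\TP$ for all $t>\beta^*$, and $T=\beta^*$ is admissible in the definition of $\tau(A)$.

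For $\tau(A)\ge\beta^*$, pick $B$ attaining the maximum. If $\beta^*=0$ there is nothing to prove, since $\tau\ge0$. Otherwise the set $S_B=\{t>0:\det(B^{\circ t})\le0\}$ is nonempty with supremum $\beta^*$. The map $t\mapsto\det(B^{\circ t})$ is continuous on $(0,\infty)$, being a finite sum of exponentials. Hence $S_B$ is relatively closed, and since $S_B\subseteq(0,T_0]$ with positive supremum, $\beta^*\in S_B$. So $A^{\circ\beta^*}$ is not totally positive, and by the remark after the definitions any failure at $t_0>0$ forces $\tau(A)\ge t_0$. Even without attainment, one can take $t_n\in S_B$ with $t_n\uparrow\beta^*$: any admissible $T$ must satisfy $T\ge t_n$ for all $n$, so $T\ge\beta^*$.

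The $\tau_r$ statement follows verbatim by restricting both the minors and the submatrices to order at most $r$. I expect no serious obstacle. The only delicate points are the finiteness of $\beta(B)$, which uses eventual total positivity, and the convention $\sup\varnothing=0$ matching $\inf\{T\ge0\}$ when every power is totally positive.
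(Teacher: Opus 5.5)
Your proof is correct and follows the paper's argument. The bad set of exponents for $A$ is the union of the finitely many sets $\{t>0:\det(B^{\circ t})\le0\}$, so $\tau(A)$, which is the supremum of that union, equals $\max_B\beta(B)$; you simply spell out the two inequalities that the paper compresses into one sentence, and the continuity/attainment step is optional since your sequence argument already suffices.
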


\begin{proof}
The matrix $A^{\circ t}$ is totally positive exactly when
$\det(B^{\circ t})>0$ for every square submatrix $B$.  The union of the bad
sets of the finitely many minors therefore has supremum equal to the maximum
of their individual suprema, which is \eqref{eq:taumax}.
The formula for $\tau_r(A)$ follows in the same way after restricting the
argument to square submatrices of order at most $r$.
\end{proof}

\begin{corollary}[Exact reduction for $4\times4$ matrices]
\label{cor:fourbyfour}
Let $A$ be a $4\times4$ totally positive matrix.  Then
\begin{equation*}\label{eq:tau4}
 \tau(A)=\max\left\{
 \beta(A),
 \max_{\substack{B\subset A\\ \operatorname{ord}B=3}}\tau(B)
 \right\}.
\end{equation*}
Every threshold $\tau(B)$ in the second term is given by the order-three
classification and is strictly less than $1$.  Thus all genuinely new
order-four behavior is concentrated in the exponential polynomial
\begin{equation}\label{eq:det4}
 t\longmapsto\det(A^{\circ t}).
\end{equation}
For symmetric $A$, the known universal result gives $\tau(A)\leq2$; for
general $A$, no finite uniform bound exists.
\end{corollary}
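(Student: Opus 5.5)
The plan is to start from Proposition~\ref{prop:minorthreshold}, which gives $\tau(A)=\max_B\beta(B)$ over all square submatrices $B$ of $A$, and to group these submatrices by order. For a $4\times4$ matrix the orders are $1,2,3,4$, and the only submatrix of order four is $A$ itself. The corollary then reduces to three claims. First, submatrices of order one and two contribute $\beta(B)=0$. Second, the contributions of the order-three submatrices collect exactly into $\max_{\operatorname{ord}B=3}\tau(B)$. Third, the only remaining term is $\beta(A)$, which is governed by the exponential polynomial \eqref{eq:det4}.

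For the first claim, entries of $A^{\circ t}$ are positive for every $t>0$. A $2\times2$ minor of $A^{\circ t}$ equals $(a_{ij}a_{kl})^t-(a_{il}a_{kj})^t$, and this is positive for all $t>0$ because $a_{ij}a_{kl}>a_{il}a_{kj}$ by total positivity of $A$. Hence $\beta(B)=0$ for every $B$ of order at most two. For the second claim, fix a $3\times3$ submatrix $B$. Since $A$ is $\TP$, $B$ is $\TP$, so I apply Proposition~\ref{prop:minorthreshold} to $B$ itself. This gives $\tau(B)=\max_{C\subseteq B}\beta(C)$. By the first claim, this maximum equals $\beta(B)$. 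Therefore
\[
 \max_{\operatorname{ord}B=3}\beta(B)=\max_{\operatorname{ord}B=3}\tau(B).
\]
Combining the three orders gives the displayed formula for $\tau(A)$.

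The supplementary assertions follow from earlier results. Each $3\times3$ submatrix $B$ is $\TP$, so Corollary~\ref{cor:threshold}, or equivalently Theorem~\ref{thm:realization}, gives $\tau(B)<1$. The bound $\tau(A)\le2$ for symmetric $A$ comes from Theorem~\ref{thm:FJScritical}(ii): for $t\ge2$, the matrix $A^{\circ t}$ is $\TN$. To upgrade this to $\TP$, I would note that for $t>2$ we have $t=2s$ with $s>1$. Then $A^{\circ t}=(A^{\circ s})^{\circ 2}$, and $A^{\circ s}$ is at least $\TN$. I would check the precise statement in \cite{FallatJohnsonSokal2017}; the $\TP$ version of the critical exponent there should give strict positivity for every $t>2$ directly, and the claim needs only this. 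The absence of a uniform bound in the general case is Theorem~\ref{thm:FJScritical}(iii).

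The only delicate point is bookkeeping: checking that $\beta$ of each $3\times3$ submatrix really coincides with its $\tau$. This holds because lower-order minors never vanish. I expect no substantive obstacle beyond citing the universal symmetric bound in its strict form.
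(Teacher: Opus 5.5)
Your proof is correct and follows the paper's argument. You split Proposition~\ref{prop:minorthreshold} by order, note zero contributions from orders one and two, use the order-three results for the $3\times3$ blocks, and cite Theorem~\ref{thm:FJScritical} for the final assertions. You also make explicit why $\beta(B)=\tau(B)$ for each $3\times3$ block $B$, which the paper leaves implicit.

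You should drop the attempted $\TN$-to-$\TP$ upgrade via $A^{\circ t}=(A^{\circ s})^{\circ2}$. The claim that $A^{\circ s}$ is $\TN$ for $s>1$ is false for $s\in(1,2)$: the symmetric examples of Theorem~\ref{thm:exactrange4} have negative determinant just below their threshold. Even if it held, squaring a symmetric $\TN$ matrix would only give $\TN$. Theorem~\ref{thm:FJScritical}\textup{(ii)} is already stated for $\TP$ matrices, so it gives $\tau(A)\le2$ directly.
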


\begin{proof}
Minors of orders one and two have zero threshold.  Apply
Proposition~\ref{prop:minorthreshold} and Theorem~\ref{thm:classification} to
the minors of order three.  The final assertions are the known critical
exponent results summarized in Section~\ref{sec:background}.
\end{proof}

The corollary reduces the order-four problem to a finite calculation.  We need
the last nonpositive zero of \eqref{eq:det4}, the local obstruction
$\det\Lambda(A)$, and the known thresholds of the $3\times3$ submatrices.

\section{The symmetric order-four determinant}

Let $A$ be a normalized symmetric $4\times4$ matrix, written in block form as
\begin{equation*}\label{eq:sym4block}
 A=\begin{pmatrix}
  1&\mathbf 1^{\mathsf T}\\
  \mathbf 1&\exp^{\circ}L
 \end{pmatrix},
 \qquad L=\Lambda(A)\in\mathbb R^{3\times3}_{\rm sym},
\end{equation*}
where $\exp^{\circ}L=(e^{\ell_{ij}})$ and $\mathbf 1$ is the three-dimensional
all-ones vector.  Subtracting the first row from the other rows gives
\begin{equation}\label{eq:sym4det}
 \det(A^{\circ t})
 =\det\bigl(\exp^{\circ}(tL)-\mathbf 1\mathbf 1^{\mathsf T}\bigr).
\end{equation}

The leading term in the zero-exponent expansion may vanish when $\det L=0$.
We therefore compute the next coefficient.

\begin{proposition}
Let $A$ be a normalized $4\times4$ matrix with positive entries and
logarithmic core $L$. Then
\begin{equation}\label{eq:fourthexpansion}
\det(A^{\circ t})
=t^3\det L
+\frac{t^4}{2}
\operatorname{tr}\bigl(\operatorname{adj}(L)L^{\circ2}\bigr)
+O(t^5),
\qquad(t\to0),
\end{equation}
where $L^{\circ2}$ denotes the Hadamard square and
$\operatorname{adj}(L)$ is the classical adjugate.
If $L$ is invertible, the coefficient of $t^4$ can equivalently
be written as $\frac{\det L}{2}\operatorname{tr}\bigl(L^{-1}L^{\circ2}\bigr)$.

\end{proposition}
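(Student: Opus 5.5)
We start from the reduction already used in Proposition~\ref{prop:zeroexpansion} and in \eqref{eq:sym4det}: since $A$ is normalized, subtracting the first row and expanding along the first column gives
\[
 \det(A^{\circ t})=\det\bigl(E(t)\bigr),\qquad E(t):=\bigl(e^{t\ell_{ij}}-1\bigr)_{i,j=1}^{3}.
\]
This identity holds for any normalized matrix, symmetric or not. Expanding each entry gives
\[
 E(t)=tL+\tfrac{t^2}{2}L^{\circ2}+O(t^3).
\]
Factoring out $t$ from each of the three rows yields
\[
 \det E(t)=t^3\det\bigl(L+\tfrac t2L^{\circ2}+O(t^2)\bigr).
\]
The problem is therefore reduced to the first-order expansion of $\det(L+\varepsilon X)$ with $\varepsilon=t/2$ and $X=L^{\circ2}$. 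We also have to check that the $O(t^2)$ remainder only contributes at order $t^5$.

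The key tool is Jacobi's formula, in the form that does not require invertibility:
\[
 \det(L+\varepsilon X)=\det L+\varepsilon\operatorname{tr}\bigl(\operatorname{adj}(L)X\bigr)+O(\varepsilon^2).
\]
To justify it directly, we use multilinearity of the determinant in the columns. The $\varepsilon$-linear term is the sum over $j$ of $\det$ of $L$ with column $j$ replaced by column $j$ of $X$. Expanding each such determinant along column $j$ gives $\sum_{i,j}X_{ij}C_{ij}$, where $C_{ij}$ is the $(i,j)$ cofactor of $L$. This sum equals $\operatorname{tr}(\operatorname{adj}(L)X)$, because $\operatorname{adj}(L)_{ji}=C_{ij}$. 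Every remaining term involves at least two columns drawn from the perturbation, so it is $O(\varepsilon^2)$. The same multilinear expansion shows that a perturbation $L+\tfrac t2L^{\circ2}+R(t)$ with $R(t)=O(t^2)$ changes the determinant by $O(t^2)$ beyond the computed terms. Multiplying by $t^3$ then gives \eqref{eq:fourthexpansion}.

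For the alternative form, if $L$ is invertible we use $\operatorname{adj}(L)=(\det L)L^{-1}$, which gives the coefficient $\frac{\det L}{2}\operatorname{tr}(L^{-1}L^{\circ2})$. The only point needing care, and the main obstacle such as it is, is bookkeeping the remainder uniformly. The entries are analytic in $t$, so $\det E(t)$ is an entire function of $t$ whose Taylor coefficients are polynomial in the $\ell_{ij}$. It therefore suffices to identify the coefficients of $t^3$ and $t^4$, and the argument above does exactly that.
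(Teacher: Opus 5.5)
Your proposal is correct and follows the paper's proof essentially step for step: the row reduction to $\det\bigl(e^{t\ell_{ij}}-1\bigr)_{i,j=1}^3$, the entrywise Taylor expansion, factoring $t^3$ out of the rows, the adjugate form of the first-variation formula (valid even for singular $L$) together with the $O(t^2)$ remainder bound, and $\operatorname{adj}(L)=(\det L)L^{-1}$ in the invertible case. The only addition is your cofactor-expansion derivation of $\operatorname{tr}\bigl(\operatorname{adj}(L)X\bigr)$ as the linear term, which the paper simply states.
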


\begin{proof}
Since $A$ is normalized, its first row and first column consist
entirely of ones. Write
\[
A=\begin{pmatrix}
1&\mathbf1^{\mathsf T}\\
\mathbf1&M
\end{pmatrix},
\]
where $\mathbf1\in\mathbb R^3$ is the all-ones vector and $M$ is
the lower-right $3\times3$ block. The logarithmic core is
\[
L=(\log M_{ij})_{i,j=1}^{3}.
\]
The lower-right block of $A^{\circ t}$ is
$(e^{tL_{ij}})_{i,j=1}^{3}$.
Subtracting the first row from each of the remaining three rows
preserves the determinant and makes the first column equal to
$(1,0,0,0)^{\mathsf T}$. Expanding along this column gives
$\det(A^{\circ t})=\det\bigl(e^{tL_{ij}}-1\bigr)_{i,j=1}^{3}$.
The entrywise Taylor expansion yields
\[
\bigl(e^{tL_{ij}}-1\bigr)_{i,j=1}^{3}
=t\left(L+\frac{t}{2}L^{\circ2}+O(t^2)\right).
\]
Factoring $t$ from each of the three rows, we obtain
\[
\det(A^{\circ t})
=t^3\det\left(L+\frac{t}{2}L^{\circ2}+O(t^2)\right).
\]
For fixed square matrices $L$ and $N$, the first variation formula
for the determinant is
\[
\det(L+\varepsilon N)
=\det L
+\varepsilon\operatorname{tr}
 \bigl(\operatorname{adj}(L)N\bigr)
+O(\varepsilon^2).
\]
This formula holds even when $L$ is singular. An additional
matrix perturbation of order $O(t^2)$ changes the determinant
by $O(t^2)$, since the determinant is a polynomial in its entries.
Taking $\varepsilon=t$ and $N=\frac12L^{\circ2}$ therefore gives
\[
\det\left(L+\frac{t}{2}L^{\circ2}+O(t^2)\right)
=\det L
+\frac{t}{2}\operatorname{tr}
 \bigl(\operatorname{adj}(L)L^{\circ2}\bigr)
+O(t^2).
\]
Multiplying by $t^3$ proves the asserted expansion.
Finally, if $L$ is invertible, use
$\operatorname{adj}(L)=(\det L)L^{-1}$.
\end{proof}

\begin{corollary}[The positive-semidefinite boundary]\label{cor:psdboundary}
If $L$ is positive semidefinite and $\det L=0$, then the coefficient of $t^4$
in \eqref{eq:fourthexpansion} is nonnegative.  More precisely,
$ \operatorname{tr}\bigl(\operatorname{adj}(L)L^{\circ2}\bigr)\geq0$.
\end{corollary}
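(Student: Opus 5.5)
The plan is to use the structure of the adjugate of a singular positive semidefinite matrix. Since $L$ is real symmetric, positive semidefinite and singular, its rank is at most two, and we split into cases by the rank.

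\textbf{Rank at most one.} If $\operatorname{rank}L\leq1$, every $2\times2$ minor of $L$ vanishes. Hence $\operatorname{adj}(L)=0$, the trace is zero, and the inequality holds trivially.

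\textbf{Rank exactly two.} Now $\ker L$ is spanned by a single unit vector $v$. We use the standard fact that the adjugate of a rank-$(n-1)$ matrix has rank one and is built from the null vectors of the matrix and of its transpose. For symmetric $L$ this gives
\[
 \operatorname{adj}(L)=\kappa\, vv^{\mathsf T},\qquad \kappa\in\mathbb R .
\]
To find $\kappa$, diagonalize $L=U\operatorname{diag}(\lambda_1,\lambda_2,0)U^{\mathsf T}$ with $U$ orthogonal. Then
\[
 \operatorname{adj}(L)=U\operatorname{adj}\bigl(\operatorname{diag}(\lambda_1,\lambda_2,0)\bigr)U^{\mathsf T}=\lambda_1\lambda_2\,vv^{\mathsf T},
\]
because the adjugate of that diagonal matrix is $\operatorname{diag}(0,0,\lambda_1\lambda_2)$. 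Since $L$ is positive semidefinite, $\lambda_1\lambda_2>0$, so $\operatorname{adj}(L)$ is positive semidefinite.

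\textbf{The Hadamard factor.} The remaining point is that $L^{\circ2}$ is also positive semidefinite. This is the Schur product theorem: $L^{\circ2}=L\circ L$ is the Hadamard product of two positive semidefinite matrices.

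\textbf{Conclusion.} The trace of a product of two positive semidefinite matrices is nonnegative, since $\operatorname{tr}(PQ)=\operatorname{tr}(P^{1/2}QP^{1/2})\geq0$. More concretely,
\[
 \operatorname{tr}\bigl(\operatorname{adj}(L)L^{\circ2}\bigr)=\lambda_1\lambda_2\,v^{\mathsf T}L^{\circ2}v\geq0 .
\]
Combining the two cases gives the corollary, so the coefficient of $t^4$ in \eqref{eq:fourthexpansion} is nonnegative. I expect no real obstacle; the only step needing care is the rank-one form of the adjugate together with the sign of $\kappa$, which the diagonalization above settles.
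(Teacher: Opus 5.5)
Your proof is correct and takes the same route as the paper. You split by rank to show that $\operatorname{adj}(L)$ is either zero or the positive multiple $\lambda_1\lambda_2\,vv^{\mathsf T}$, use the Schur product theorem for $L^{\circ2}\succeq0$, and conclude with $\operatorname{tr}(PQ)\geq0$ for positive semidefinite $P,Q$; your diagonalization just spells out the rank-one positivity of the adjugate that the paper states without computation.
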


\begin{proof}
The adjugate of a $3\times3$ positive-semidefinite matrix with zero determinant
is positive semidefinite: it has rank one and is positive semidefinite when
$\operatorname{rank}L=2$, and it is zero when $\operatorname{rank}L\leq1$.
The Schur product theorem \cite{Schur1911} gives $L^{\circ2}\geq0$.  Since
$\operatorname{tr}(PQ)\geq0$ for positive-semidefinite matrices $P,Q$, the
claim follows.
\end{proof}

The preceding expansion is local, whereas the next theorem 
shows that positive definiteness of the logarithmic core guarantees 
positivity of the determinant of entire matrix for every positive Hadamard power.
\begin{theorem}
\label{thm:logcorePD}
Let $A$ be a normalized symmetric $4\times4$ matrix with positive entries.  If
$L=\Lambda(A)$ is positive definite, then
\begin{equation}\label{eq:fullpositive}
 \det(A^{\circ t})>0\qquad\text{for every }t>0.
\end{equation}
If, in addition, every $3\times3$ submatrix of $A$ is eternally totally
positive and every $2\times2$ minor of $A$ is positive, then $A$ is eternally
totally positive.
\end{theorem}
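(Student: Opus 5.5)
The plan is to work directly with the reduced determinant formula \eqref{eq:sym4det}. For a normalized symmetric $A$ it gives
\[
\det(A^{\circ t})=\det\bigl(\exp^{\circ}(tL)-\mathbf 1\mathbf 1^{\mathsf T}\bigr),
\]
so it suffices to show that the $3\times3$ matrix $E(t):=\exp^{\circ}(tL)-\mathbf 1\mathbf 1^{\mathsf T}$ is positive definite for every $t>0$.

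The key step is the entrywise power series of the exponential. Since $e^{t\ell_{ij}}=\sum_{k\ge0}t^k\ell_{ij}^k/k!$ for each entry, we get
\[
\exp^{\circ}(tL)=\sum_{k\ge0}\frac{t^k}{k!}L^{\circ k},
\qquad L^{\circ0}=\mathbf 1\mathbf 1^{\mathsf T}.
\]
The constant term cancels exactly against the subtracted all-ones matrix, which leaves
\[
E(t)=tL+\sum_{k\ge2}\frac{t^k}{k!}L^{\circ k}.
\]

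We then invoke the Schur product theorem \cite{Schur1911}, as in Corollary~\ref{cor:psdboundary}. Inductively, every Hadamard power $L^{\circ k}$ with $k\ge1$ is positive semidefinite because $L$ is. The tail series converges entrywise, and the cone of positive-semidefinite matrices is closed, so the tail is positive semidefinite. Hence $E(t)$ is the sum of the positive-definite matrix $tL$ (recall $t>0$) and a positive-semidefinite matrix. Therefore $E(t)$ is positive definite, and $\det E(t)>0$, which proves \eqref{eq:fullpositive}. The only care needed is to confirm that \eqref{eq:sym4det} really does apply: $A$ is normalized, so its first row and column are ones, and the row subtraction is legitimate. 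I do not expect a genuine obstacle here. The one point that makes the argument work is recognizing that the $k=0$ term of the series is exactly $\mathbf 1\mathbf 1^{\mathsf T}$.

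For the second assertion, we check every minor of $A^{\circ t}$ for $t>0$.
\begin{enumerate}[label=\textup{(\alph*)}]
 \item Entries $a_{ij}^t$ are positive.
 \item A $2\times2$ minor of $A^{\circ t}$ equals $(a_{ij}a_{kl})^t-(a_{il}a_{kj})^t$. It is positive because the positive $2\times2$ minor of $A$ gives $a_{ij}a_{kl}>a_{il}a_{kj}$, and $u\mapsto u^t$ is strictly increasing.
 \item Every $3\times3$ minor of $A^{\circ t}$ is the determinant of $B^{\circ t}$ for some $3\times3$ submatrix $B$. This is positive because $B$ is assumed eternally totally positive.
 \item The full $4\times4$ determinant is positive by the first part.
\end{enumerate}
Thus $A^{\circ t}\in\TP$ for every $t>0$, that is, $A\in\ETP$. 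Equivalently, by Proposition~\ref{prop:minorthreshold}, all $\beta(B)$ vanish and $\tau(A)=0$.
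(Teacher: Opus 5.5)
Your proposal is correct and follows essentially the same argument as the paper. You expand $\exp^{\circ}(tL)$ entrywise so that the constant term cancels $\mathbf 1\mathbf 1^{\mathsf T}$, use the Schur product theorem to show the tail is positive semidefinite (you also make explicit the closedness of the PSD cone, which the paper leaves implicit), and then check the minors of $A^{\circ t}$ order by order to obtain ETP.
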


\begin{proof}
The entrywise exponential has the convergent expansion
\[
 \exp^{\circ}(tL)-\mathbf 1\mathbf 1^{\mathsf T}
 =\sum_{m=1}^{\infty}\frac{t^m}{m!}L^{\circ m}.
\]
The first term $tL$ is positive definite.  Every remaining term is positive
semidefinite by repeated application of the Schur product theorem
\cite{Schur1911}.  Therefore
the sum is positive definite for every $t>0$, and
\eqref{eq:sym4det} proves \eqref{eq:fullpositive}.

Under the additional hypotheses, all minors of orders two and three of every
$A^{\circ t}$ are positive.  Positivity of the entries is automatic, and
\eqref{eq:fullpositive} handles the only minor of order four.  Hence
$A^{\circ t}$ is totally positive for every $t>0$.
\end{proof}

\begin{corollary}[Localization of thresholds above one]
\label{cor:aboveone}
Let $A$ be a normalized symmetric $4\times4$ totally positive matrix.  If its
logarithmic core $L$ is positive definite, then
\begin{equation}\label{eq:tau3control}
 \tau(A)=
 \max_{\substack{B\subset A\\\operatorname{ord}B=3}}\tau(B)<1.
\end{equation}
Consequently,
$\tau(A)>1$ inplies $\Lambda(A)$ is not positive definite.
\end{corollary}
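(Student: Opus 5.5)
The plan is to combine the minor-by-minor formula of Proposition~\ref{prop:minorthreshold} with the global positivity statement of Theorem~\ref{thm:logcorePD}. Since $A$ is a $4\times4$ totally positive matrix, Corollary~\ref{cor:fourbyfour} gives
\[
 \tau(A)=\max\Bigl\{\beta(A),\max_{\operatorname{ord}B=3}\tau(B)\Bigr\}.
\]
So it suffices to show that $\beta(A)=0$ when $L=\Lambda(A)$ is positive definite. The normalization is harmless here. $A$ is already normalized and symmetric, and by Proposition~\ref{prop:transformations}(i) a normalization would not change any threshold in any case.

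\textbf{Step 1: the full determinant.} By the first part of Theorem~\ref{thm:logcorePD}, $\det(A^{\circ t})>0$ for every $t>0$. Hence the set $\{t>0:\det(A^{\circ t})\le0\}$ is empty, and by the convention in the definition of $\beta$ we get $\beta(A)=0$.

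\textbf{Step 2: the $3\times3$ submatrices.} Every $3\times3$ submatrix $B$ of $A$ is totally positive, because it is a submatrix of a totally positive matrix. Corollary~\ref{cor:threshold} then gives $\tau(B)\in[0,1)$: either $\tau(B)=0$, or $\tau(B)=t_B$ with $t_B\in(0,1)$. There are only finitely many such $B$, so the maximum of the $\tau(B)$ is attained and is strictly less than $1$. Minors of orders one and two contribute nothing, as already noted in the proof of Corollary~\ref{cor:fourbyfour}. Combining these facts yields \eqref{eq:tau3control}.

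\textbf{Step 3: the consequence.} The final assertion is the contrapositive of \eqref{eq:tau3control}. If $\Lambda(A)$ were positive definite, then $\tau(A)<1$. So $\tau(A)>1$ forces $\Lambda(A)$ not to be positive definite.

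No step is a real obstacle, since the substantive work is already done in Theorem~\ref{thm:logcorePD}. The only points needing care are, first, that $\beta(A)=0$ follows from the empty-supremum convention rather than from a limit argument, and second, that strict inequality $<1$ requires finiteness of the family of submatrices together with the strict bound $t_B<1$ from Corollary~\ref{cor:threshold}.
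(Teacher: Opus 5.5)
Your proposal is correct and follows essentially the same route as the paper: Theorem~\ref{thm:logcorePD} gives $\beta(A)=0$, Corollary~\ref{cor:fourbyfour} reduces $\tau(A)$ to the maximum of the order-three thresholds, each of which is strictly below $1$, and the final claim is the contrapositive. When you apply Corollary~\ref{cor:threshold} to a $3\times3$ submatrix $B$, you are implicitly passing to its normalized form; this is harmless by Proposition~\ref{prop:transformations}\textup{(i)}, just as you noted for $A$ itself.
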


\begin{proof}
Theorem~\ref{thm:logcorePD} gives $\beta(A)=0$ for the full determinant.
Equation \eqref{eq:tau3control} follows from Corollary~\ref{cor:fourbyfour}, and
each order-three threshold is strictly below $1$.  The final assertion is the
contrapositive.
\end{proof}

We now show that thresholds can be arbitrarily close to the universal endpoint
$2$.

\begin{theorem}
\label{thm:neartwo}
For every $s\in(1,2)$, there exists a symmetric Hankel $4\times4$ totally
positive matrix $A$ such that $ s<\tau(A)<2$.
Consequently,
$\sup\{\tau(A):A=A^{\mathsf T}\in\TP\}=2$.
\end{theorem}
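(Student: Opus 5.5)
We need, for each $s\in(1,2)$, a symmetric Hankel $4\times4$ TP matrix with $s<\tau(A)<2$. The upper bound $\tau(A)\le2$ comes from Theorem~\ref{thm:FJScritical}(ii). We will show strict inequality by exhibiting a Hankel matrix whose full determinant is positive at $t=2$, with a margin that persists.

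The plan is to use the Fallat--Johnson--Sokal Hankel counterexamples (Theorem~\ref{thm:FJScritical}(v) with $r=4$). The critical exponent for Hankel $\TP_4$ matrices is exactly $2$. Hence for each $t_0\in(1,2)$ there is a Hankel $\TP$ matrix $A$ (a $4\times4$ truncation of a moment sequence) with $A^{\circ t_0}\notin\TN_4$. By the definition of $\tau$, this gives $\tau(A)\ge t_0$.

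Choose $t_0\in(s,2)$. If $A^{\circ t_0}$ fails to be $\TP$, then $\tau(A)\ge t_0>s$. To avoid relying only on citation, we would construct a concrete two-parameter Hankel family, e.g. $h_k=1+\varepsilon\, u^{k}$ perturbed, or the sequence $h_k=e^{k^2\gamma/2}(1+\eta k)$-type moments. Since $3\times3$ thresholds are $<1$ by Corollary~\ref{cor:fourbyfour}, the failure at $t_0>1$ must come from the full determinant $\det(A^{\circ t})$. We would therefore arrange, via the expansion \eqref{eq:fourthexpansion} or a direct computation near a degenerate Hankel matrix, that $\det(A^{\circ t_0})<0$.

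For the upper bound, we need to show $\tau(A)<2$ and not merely $\le 2$. Theorem~\ref{thm:FJScritical}(ii) gives that $A^{\circ t}$ is $\TP$ for all $t\ge2$; a $\TP$ input yields $\TP$ powers, not just $\TN$. It then suffices to note that $\det(A^{\circ t})$ is a real-analytic exponential polynomial that is positive at $t=2$. It is therefore positive on a neighborhood $(2-\eta,2]$, and positive for $t>2$ by the universal result. All $3\times3$ minors are positive for $t>1$ since their thresholds are $<1$. So $\tau(A)\le 2-\eta<2$. The supremum statement follows by letting $s\uparrow2$, combined with $\tau\le2$.

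The main obstacle is the lower bound. We must guarantee that the chosen Hankel matrix has $\det(A^{\circ t_0})<0$ at an exponent $t_0>s$ while $A$ itself is $\TP$. Quoting the sharpness part of \cite{FallatJohnsonSokal2021} settles this abstractly. An explicit family would require certifying the sign of a six-term-plus exponential polynomial, which I would attempt by taking a limit to a rank-deficient Hankel boundary matrix, where $\det(A^{\circ t})$ has a computable sign on $(1,2)$. I would then perturb into the $\TP$ cone by continuity.
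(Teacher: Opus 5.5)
Your proposal is correct and is essentially the paper's argument: the lower bound $\tau(A)\geq t_0>s$ rests on the Fallat--Johnson--Sokal sharpness example of a $4\times4$ Hankel TP matrix whose $t_0$-th Hadamard power fails to be $\TN_4$, which is exactly what the paper cites, so your exploratory paragraph about building an explicit family is not needed. The only difference is the strict bound $\tau(A)<2$: you prove it directly from total positivity of $A^{\circ2}$ and continuity of the finitely many minors near $t=2$, which is the same argument the paper later uses to show that $2$ is never attained, whereas here the paper simply cites the corresponding Fallat--Johnson--Sokal proposition.
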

\begin{proof}
The lower bound $\tau(A)>s$ follows from \cite[Example~5.5]{FallatJohnsonSokal2021}, while
the upper bound $\tau(A)<2$ follows from 
\cite[Proposition~5.6]{FallatJohnsonSokal2021}.
\end{proof}

The preceding theorem gives thresholds arbitrarily close to $2$, but it does
not prescribe their exact values.  The following kernel expansion allows us
to place the last zero exactly.  For distinct positive numbers
$u_1<\cdots<u_4$, an analytic function $g(z)=\sum_{k\geq0}c_kz^k$, and
$\varepsilon\downarrow0$, applying the Cauchy--Binet identity gives
\begin{equation}\label{eq:kernel-vandermonde}
 \det\bigl(g(\varepsilon u_i u_j)\bigr)_{i,j=1}^4
 =\varepsilon^6\left(\prod_{k=0}^3c_k\right)
  \prod_{1\leq i<j\leq4}(u_j-u_i)^2+O(\varepsilon^7).
\end{equation}
The remainder is locally uniform when $g$ varies analytically in finitely many
parameters.  This follows by writing the kernel matrix as
$V\operatorname{diag}(c_k\varepsilon^k)V^{\mathsf T}$ and observing that the
unique least-degree term in the Cauchy--Binet identity uses the columns $0,1,2,3$.

The preceding result only gives thresholds arbitrarily close to $2$.  We now
realize every value in $(1,2)$ and determine the exact range in the symmetric
$4\times4$ case.

\begin{theorem}
\label{thm:exactrange4}
For every $s\in(1,2)$, there exists a symmetric Hankel $4\times4$ totally
positive matrix $A$ satisfying $\tau(A)=s$.  Moreover,
\begin{equation}\label{eq:exactrange4}
 \{\tau(A):A=A^{\mathsf T}\in\TP\}
 =[0,1)\,\cup\,(1,2).
\end{equation}
In particular, neither $1$ nor $2$ is attained.
\end{theorem}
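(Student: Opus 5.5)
Here is the plan. There are three parts: the realization of every $s\in(1,2)$, the non-attainment of $1$ and $2$, and the assembly of \eqref{eq:exactrange4} from the earlier results.

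\textbf{Assembly of the range.} By Theorem~\ref{thm:realization}, every $s\in[0,1)$ is realized in order three. A symmetric $3\times3$ example can be bordered to a symmetric $4\times4$ totally positive matrix whose $3\times3$ submatrices keep their thresholds below that of the chosen block. Alternatively, I will take $A$ close to a matrix with positive-definite logarithmic core and use Corollary~\ref{cor:aboveone}, so that $\tau(A)=\max\tau(B)$ over the $3\times3$ submatrices $B$. I will then tune one $3\times3$ block with the family $A_s$ and keep the others eternal. This gives $[0,1)$ in order four. The universal symmetric bound of Theorem~\ref{thm:FJScritical}(ii) gives $\tau(A)\le2$. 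Combining this with the realization of $(1,2)$ below, it remains to exclude the values $1$ and $2$.

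\textbf{Excluding $1$ and $2$.} Suppose $\tau(A)=1$. Every $3\times3$ threshold is $<1$, so by Corollary~\ref{cor:fourbyfour} we must have $\beta(A)=1$. Thus $\det A^{\circ 1}\le 0$, or $\det A^{\circ t}\le0$ at points $t\uparrow1$, which by continuity forces $\det A\le0$. This contradicts $A\in\TP$.

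For $2$, the symmetric critical exponent result says $A^{\circ t}\in\TN$ for $t\ge2$, while the sharper \cite[Proposition~5.6]{FallatJohnsonSokal2021} (used in Theorem~\ref{thm:neartwo}) gives strict positivity of $\det A^{\circ t}$ for $t\ge2$ when $A$ is $\TP$. I would invoke it to get $\beta(A)<2$. The argument is that $\det A^{\circ t}>0$ on $[2,\infty)$ and, being an exponential polynomial, its zeros are isolated. So $\beta(A)=2$ would require $\det A^{\circ 2}=0$, which is excluded.

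\textbf{Realization of $(1,2)$.} This is the main obstacle. I would use the two-parameter Hankel family from the kernel expansion \eqref{eq:kernel-vandermonde}. Take $g_\lambda(z)=\sum c_k(\lambda)z^k$ whose Hankel-type kernel matrix $\bigl(g(\varepsilon u_iu_j)\bigr)$ is Hankel. For instance, choose $u_i=q^{i}$ and $g$ a function of a moment sequence, so that $a_{ij}=h_{i+j}$. Then $\det A^{\circ t}$ is governed, for small $\varepsilon$, by $\prod_k c_k^{(t)}$, where $c_k^{(t)}$ are the Taylor coefficients of $g^t$. The idea, following \cite[Example~5.5]{FallatJohnsonSokal2021}, is to use $g(z)=(1+z)^{\cdot}$-type or $1+z+\ldots$ kernels where exactly one coefficient of $g^t$ (the $z^3$ coefficient) changes sign at a prescribed $t=s\in(1,2)$. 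Concretely, I will choose $g_\lambda(z)=1+z+\lambda z^2+\mu z^3$ and tune $\lambda,\mu$ so that $[z^3]g_\lambda^t$ is negative on $(1,s)$, vanishes simply at $s$, and is positive for $t>s$. The expansion then gives $\det A^{\circ t}$ with sign equal to that of $[z^3]g^t$ for $t$ in a compact subinterval of $(0,\infty)$ away from $s$.

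The delicate points are as follows. The remainder in \eqref{eq:kernel-vandermonde} is uniform only on compact $t$-sets. Near $t=s$, I would perturb the second parameter to place the actual zero of the full determinant exactly at $s$, using the intermediate value theorem in that parameter together with a monotonicity or simple-zero argument. For large $t$, positivity comes from eventual total positivity. I also need the $3\times3$ thresholds ($<1<s$) and $A\in\TP$ itself (which holds since $1<s$ and the $t=1$ sign is positive). Then Corollary~\ref{cor:fourbyfour} gives $\tau(A)=\beta(A)=s$.
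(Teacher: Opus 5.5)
Your outline follows the paper's construction: the kernel expansion \eqref{eq:kernel-vandermonde}, a sign change in the cubic Taylor coefficient of $g^t$, a second parameter to pin the zero, and Theorem~\ref{thm:FJcritical} for the proper minors. Your exclusion of $1$ and $2$ by continuity is correct. The realization step, however, fails as specified. For $g=1+z+\lambda z^2+\mu z^3$,
\[
 [z^2]g^t=\tfrac t2\,(t-1+2\lambda),\qquad
 [z^3]g^t=\tfrac t6\bigl((t-1)(t-2)+6\lambda(t-1)+6\mu\bigr).
\]
So the cubic coefficient equals $\mu$ at $t=1$, and the leading term of $\det A$ is $\varepsilon^6\lambda\mu\prod_{i<j}(u_j-u_i)^2$. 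Hence $A\in\TP$ forces $\mu>0$, while negativity on $(1,s)$ forces $\mu\le0$. The sign pattern you prescribe therefore contradicts your own claim that ``the $t=1$ sign is positive''. You need negativity only on $(r_1,s)$ with $1<r_1<s$. For example, $\lambda=(3-r_1-s)/6$ and $\mu=(r_1-1)(s-1)/6$ with $1<r_1<\min\{s,3-s\}$ give $[z^3]g^t=\tfrac t6(t-r_1)(t-s)$ and $[z^2]g^t>0$ for $t\ge1$. This is exactly what the paper's perturbation $p>2$ achieves. At $p=2$ its polynomial is $P(t)=(t-1)(t-s)$ and the matrix is singular; taking $p>2$ gives $P(1)=S>0$, which pushes the other root above $1$. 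Moreover, $\det A>0$ alone does not give $A\in\TP$. You need every contiguous minor to be positive, which the $k\times k$ version of \eqref{eq:kernel-vandermonde} supplies because $1,1,\lambda,\mu>0$.

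Two further steps do not go through as written. First, ``for large $t$, positivity comes from eventual total positivity'' leaves a hole. The remainder in \eqref{eq:kernel-vandermonde} is uniform only on a compact $t$-set chosen before $\varepsilon$. As $\varepsilon\to0$ the matrices tend to $\mathbf 1\mathbf 1^{\mathsf T}$, so nothing bounds the exponent beyond which $A_\varepsilon^{\circ t}$ becomes totally positive. Do as the paper does instead:
take the compact set to be $[s,2]$;
use uniform convergence of $\varepsilon^{-6}\det(A_\varepsilon^{\circ t})$ and its $t$-derivative, which after pinning gives a uniform slope bound on $[s,s+\eta]$;
and invoke Theorem~\ref{thm:FJScritical}\textup{(ii)} for $t\ge2$.

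Second, you have not established $[0,1)$ in order four. The bordering claim is unsupported. In the positive-definite-core route, every principal $3\times3$ block has a positive-definite logarithmic core (a principal submatrix of $\Lambda(A)$, or its Gram matrix on two difference vectors), so every such block is ETP. The tuned block would therefore have to be nonprincipal, and you give no construction of one. The missing argument is one line, the scaling law of Proposition~\ref{prop:transformations}\textup{(iv)}: if $\tau(A)=T\in(1,2)$ and $q=T/r$, then $\tau(A^{\circ q})=r$. The value $0$ comes from $(e^{x_ix_j})$.

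With these repairs your cubic family is a valid and somewhat simpler variant of the paper's construction. In the paper's notation $R=2\lambda$ and $S=6\mu$ become independent parameters, so you need neither the limit $p\downarrow2$ nor the Hankel sum theorem.
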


\begin{proof}
Fix $s\in(1,2)$ and choose $u_i=a^{i-1}$ with $a>1$.  For
$p>2$, $\delta>0$, and $\varepsilon>0$, set
\begin{equation*}\label{eq:exact-family}
 M_\varepsilon=(1+\varepsilon u_i u_j)_{i,j=1}^4,
 \qquad A_{\varepsilon,\delta}
 =M_\varepsilon+\delta M_\varepsilon^{\circ p}.
\end{equation*}
Both summands are Hankel.  The first is totally nonnegative, whereas the second
is totally positive for all sufficiently small $\varepsilon>0$.  To verify the
latter assertion without applying a strict power theorem to the rank-two
matrix $M_\varepsilon$, consider any contiguous $k\times k$ submatrix,
$1\leq k\leq4$.  After absorbing a positive power of $a$ into $\varepsilon$,
its entries have the form
\[
 (1+\eta v_i v_j)^p,\qquad v_i=a^{i-1},\quad \eta>0.
\]
The $k\times k$ version of \eqref{eq:kernel-vandermonde} has leading
coefficient
\[
 \eta^{k(k-1)/2}
 \left(\prod_{m=0}^{k-1}\binom{p}{m}\right)
 \prod_{1\leq i<j\leq k}(v_j-v_i)^2.
\]
It is positive for $p>2$ and $k\leq4$.  There are only finitely many
contiguous minors, so they are all positive when $\varepsilon$ is sufficiently
small; Fekete's criterion then gives
$M_\varepsilon^{\circ p}\in\TP_4$.  Since
$M_\varepsilon\in\TN_4$, the corrected Hankel sum theorem
\cite[Corollary~3.3(b)]{FallatJohnsonSokal2017}, with the proof corrected in
\cite{FallatJohnsonSokal2021}, now yields
$A_{\varepsilon,\delta}\in\TP_4$.

Apply \eqref{eq:kernel-vandermonde} to
$g_{t,p,\delta}(z)=\bigl[(1+z)+\delta(1+z)^p\bigr]^t$.
Write 
\[h_0=1+\delta, \quad h_1=1+p\delta,\quad
h_2=\delta p(p-1),\quad\mbox{ and}\quad h_3=\delta p(p-1)(p-2),\]
 and set
\[
 R=\frac{h_0h_2}{h_1^2},\qquad S=\frac{h_0^2h_3}{h_1^3}.
\]
The only sign-bearing factors among the first four Taylor coefficients for
$t>1$ are
$ Q(t)=t-1+R$ and
 $P(t)=(t-1)(t-2)+3R(t-1)+S$.
Indeed,
\[
 c_2=\frac{t}{2}h_0^{t-2}h_1^2Q(t),\qquad
 c_3=\frac{t}{6}h_0^{t-3}h_1^3P(t).
\]
Thus $Q(t)>0$ for $t>1$, and the leading sign in
\eqref{eq:kernel-vandermonde} is the sign of $P(t)$.

First allow $p=2$.  Then $S=0$ and
\[
 R=\frac{2\delta(1+\delta)}{(1+2\delta)^2}.
\]
This strictly increases from $0$ to $1/2$.  Choose the unique $\delta_0>0$
such that $R=(2-s)/3$.  At this point $P(t)=(t-1)(t-s)$, so $s$ is a simple
larger root.  By the implicit-function theorem, for every $p>2$ sufficiently
close to $2$ there is a positive $\delta_p$ for which $P(s)=0$; the other root
remains below $s$ and $P'(s)>0$.

Normalize the determinant in \eqref{eq:kernel-vandermonde} by
$\varepsilon^6$.  Its analytic extension to $\varepsilon=0$ has nonzero
$\delta$-derivative at $(0,\delta_p,s)$, since the root condition is
transverse (at $p=2$ this derivative is
$3(s-1)\,dR/d\delta>0$).  A second application of the implicit-function
theorem gives
$\delta(\varepsilon)=\delta_p+O(\varepsilon),~
 \det\bigl(A_{\varepsilon,\delta(\varepsilon)}^{\circ s}\bigr)=0$.
For completeness, let
\[
 D_\varepsilon(t)=\varepsilon^{-6}
 \det\bigl(A_{\varepsilon,\delta(\varepsilon)}^{\circ t}\bigr).
\]
On a fixed compact neighbourhood of $[s,2]$, the functions
$D_\varepsilon$ and their first $t$-derivatives converge uniformly to a
positive continuous factor times $Q(t)P(t)$.  Since $P(s)=0$ and $P'(s)>0$,
there are $\eta,c>0$ such that $D_\varepsilon'(t)>c$ on
$[s,s+\eta]$ for all small $\varepsilon$.  As $D_\varepsilon(s)=0$, this
proves positivity on $(s,s+\eta]$ and negativity immediately to the left of
$s$.  On the remaining compact interval $[s+\eta,2]$, the limiting function
has a positive minimum because $Q(t)P(t)>0$; uniform convergence therefore
gives $D_\varepsilon(t)>0$ there as well.  This proves, rather than merely
asserts, that no additional full-determinant zero occurs in $(s,2]$.

Theorem~\ref{thm:FJScritical} \textup{(ii)} gives total positivity for
$t\geq2$, while Theorem~\ref{thm:FJcritical} applied to the proper submatrices
shows that every proper minor is positive for $t>1$.  Hence
$A_{\varepsilon,\delta(\varepsilon)}^{\circ t}$ is totally positive for every
$t>s$ and fails to be so immediately below $s$.  Its threshold is exactly
$s$.

Fix one such matrix with threshold $T\in(1,2)$.  Given $r\in(0,1)$, take
$q=T/r>T$ and set $B=A^{\circ q}$.  Then $B$ is symmetric and totally
positive, and Proposition~\ref{prop:transformations} gives
$\tau(B)=T/q=r$.  The value $0$ is attained by
$(e^{x_i x_j})_{i,j=1}^4$ with $0<x_1<\cdots<x_4$, since the exponential
kernel is strictly totally positive and every Hadamard power has the same form.

Finally, continuity of the finitely many minors implies that $t=1$ has a
neighbourhood on which every fixed totally positive matrix remains totally
positive; hence no threshold equals $1$.  In the symmetric order-four class,
total positivity holds for all $t\geq2$ and strict positivity at $t=2$
persists on a left neighbourhood; hence no threshold equals $2$.  This proves
\eqref{eq:exactrange4}.
\end{proof}

Theorem~\ref{thm:neartwo} applies the sharpness argument of
\cite{FallatJohnsonSokal2017} to the threshold of a fixed matrix.
Theorem~\ref{thm:exactrange4} goes further: its two-parameter perturbation gives
each threshold exactly.  By Corollary~\ref{cor:aboveone}, all examples with
threshold above $1$ have an indefinite or singular logarithmic core.

\section{Eternal total positivity of symmetric Toeplitz matrices}
\label{sec:toeplitz}

We now apply the order-three classification to a structured family in higher
orders. The tools concerning positive definiteness are classical: infinite
divisibility is characterized by conditional positive semidefiniteness of the
entrywise logarithm; see \cite{Bhatia2006}. Total positivity requires additional
control of nonprincipal minors. The use of positive-definiteness results
alongside lower-order minors also occurs in
\cite{FallatJohnsonSokal2017}. Our purpose here is to give an explicit
order-four classification and to identify the additional obstruction in order
five, rather than to introduce a new criterion regarding infinite divisibility.
The Toeplitz-preserver results in \cite{BeltonEtAl2022,BeltonEtAl2023}
concern transformations that work to an entire class; here we determine
whether all positive powers of an individual matrix are totally positive.

\subsection{A recursive criterion in every order}

A real symmetric $n\times n$ matrix $M$ is said to be
\emph{conditionally positive semidefinite} if $v^{\mathsf T}Mv\geq0$
for every $v\in\R^n$ satisfying
$\mathbf1^{\mathsf T}v=\sum_{i=1}^n v_i=0$.
Here and below $\mathbf1$ denotes the all-ones vector of the relevant size.
A square submatrix is \emph{contiguous} if its rows and columns are both
consecutive. We use the classical criterion that a matrix is strictly totally
positive if all its contiguous square minors are positive; see
\cite{FallatJohnson2011, Pinkus2010}.

\begin{lemma}\label{lem:toeplitz-strict-ID}
Let $A$ be a real symmetric $n\times n$ matrix with positive entries such that
$a_{ii}a_{jj}>a_{ij}^2$ for $i\ne j$. Then the following are equivalent:
\begin{enumerate}[label=\textup{(\roman*)}]
\item $\log^{\circ}A$ is conditionally positive semidefinite;
\item $\Lambda(A)\succeq0$;
\item $A^{\circ t}$ is positive definite for every $t>0$.
\end{enumerate}
\end{lemma}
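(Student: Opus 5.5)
The plan is to prove the cycle (i)$\Leftrightarrow$(ii), (ii)$\Rightarrow$(iii), (iii)$\Rightarrow$(i). Write $\ell_{ij}=\log a_{ij}$, so $\log^{\circ}A=(\ell_{ij})$. By symmetry, $\Lambda(A)_{ij}=\ell_{ij}-\ell_{i1}-\ell_{1j}+\ell_{11}$ for $2\le i,j\le n$.

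\emph{(i)$\Leftrightarrow$(ii).} This is linear algebra. Set $v_i=e_i-e_1$ for $i=2,\dots,n$. Then
\[
\Lambda(A)_{ij}=v_i^{\mathsf T}(\log^{\circ}A)\,v_j .
\]
The vectors $v_2,\dots,v_n$ form a basis of the hyperplane $\{v:\mathbf1^{\mathsf T}v=0\}$. Hence for $c\in\R^{n-1}$,
\[
c^{\mathsf T}\Lambda(A)c=v^{\mathsf T}(\log^{\circ}A)v,\qquad v=\sum_{i\ge2} c_iv_i ,
\]
and $c\mapsto v$ is a bijection onto the sum-zero vectors. So $\Lambda(A)\succeq0$ exactly when $\log^{\circ}A$ is conditionally positive semidefinite.

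\emph{(iii)$\Rightarrow$(i).} Let $J=\mathbf1\mathbf1^{\mathsf T}$. For $v$ with $\mathbf1^{\mathsf T}v=0$ we have $v^{\mathsf T}Jv=0$, so
\[
v^{\mathsf T}A^{\circ t}v=v^{\mathsf T}(A^{\circ t}-J)v .
\]
The left side is positive for $v\neq0$ by (iii). Dividing by $t$ and letting $t\downarrow0$, using $(a_{ij}^t-1)/t\to\ell_{ij}$, gives $v^{\mathsf T}(\log^{\circ}A)v\ge0$.

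\emph{(ii)$\Rightarrow$(iii).} This step needs the hypothesis $a_{ii}a_{jj}>a_{ij}^2$. The Schur product argument used in Theorem~\ref{thm:logcorePD} gives only semidefiniteness when $\Lambda(A)$ is singular, so I would use a Gram representation instead.
\begin{enumerate}[label=\textup{(\alph*)}]
\item Since $\Lambda(A)\succeq0$, write it as a Gram matrix $\Lambda(A)_{ij}=\langle g_i,g_j\rangle$ for $i,j\ge2$, and put $g_1=0$.
\item Set $f_i=\ell_{i1}-\tfrac12\ell_{11}$. Then $\ell_{ij}=\langle g_i,g_j\rangle+f_i+f_j$ for all $i,j$, the cases with an index equal to $1$ being immediate.
\item With $D_t=\operatorname{diag}(e^{tf_i})$ this gives
\[
A^{\circ t}=D_t\bigl(e^{t\langle g_i,g_j\rangle}\bigr)D_t .
\]
\item The identity $e^{t\langle x,y\rangle}=e^{t|x|^2/2}\,e^{t|y|^2/2}\,e^{-t|x-y|^2/2}$ reduces positive definiteness of $A^{\circ t}$ to that of the Gaussian kernel matrix $\bigl(e^{-t|g_i-g_j|^2/2}\bigr)$, up to a further positive diagonal congruence.
\item The Gaussian kernel is strictly positive definite on distinct points, by Bochner's theorem or the classical Schoenberg result. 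So it remains to show the $g_i$ are pairwise distinct.
\item Distinctness follows from the hypothesis, since
\[
|g_i-g_j|^2=\ell_{ii}+\ell_{jj}-2\ell_{ij}=\log\frac{a_{ii}a_{jj}}{a_{ij}^2}>0 .
\]
This is where strict positivity of the $2\times2$ principal minors is used.
\end{enumerate}

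I expect the strictness in (ii)$\Rightarrow$(iii) to be the main obstacle. The rest is bookkeeping, whereas here singular cores must be handled through distinctness of the Gram points rather than through the series expansion.
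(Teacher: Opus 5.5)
Your proposal is correct and follows the paper's proof essentially step for step. It uses the same zero-sum basis for (i)$\Leftrightarrow$(ii) and the same $t\downarrow0$ limit for (iii)$\Rightarrow$(i); for (ii)$\Rightarrow$(iii) it uses the same Gram factorization $A^{\circ t}=D_tK_tD_t$ with $g_1=0$ and $f_i=\ell_{i1}-\ell_{11}/2$, and the same distinctness computation from $a_{ii}a_{jj}>a_{ij}^2$. The only difference is the last step: you rewrite $K_t$ as a diagonal congruence of a Gaussian kernel and cite its strict positive definiteness on distinct points, while the paper proves $K_t\succ0$ directly from $z^{\mathsf T}K_tz=\sum_m\frac{t^m}{m!}\bigl\|\sum_i z_ip_i^{\otimes m}\bigr\|^2$ and separating polynomials, which is a self-contained proof of the same fact.
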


\begin{proof}
Set $M=(m_{ij})=\log^{\circ}A$ and write
\[
M=\begin{pmatrix}m_{11}&r^{\mathsf T}\\r&C\end{pmatrix}.
\]
Then
\[\Lambda(A)=C-r\mathbf1^{\mathsf T}-\mathbf1r^{\mathsf T}
+m_{11}\mathbf1\mathbf1^{\mathsf T},\]
where $\mathbf1\in\R^{n-1}$. For $u\in\R^{n-1}$, take
$v=(-\mathbf1^{\mathsf T}u,u^{\mathsf T})^{\mathsf T}\in\R^n$.
This vector satisfies $\sum_i v_i=0$, and symmetry of $M$ gives
$v^{\mathsf T}Mv=u^{\mathsf T}Cu
-2(\mathbf1^{\mathsf T}u)(r^{\mathsf T}u)
+m_{11}(\mathbf1^{\mathsf T}u)^2
=u^{\mathsf T}\Lambda(A)u$.
Every vector in $\R^n$ with zero sum has this form uniquely,
which proves (i)$\iff$(ii).
For (iii)$\Rightarrow$(i), the entrywise Taylor expansion gives
$A^{\circ t}=\mathbf1\mathbf1^{\mathsf T}+tM+O(t^2)$.
For a fixed $v\in\R^n$ with $\mathbf1^{\mathsf T}v=0$, the constant
term has quadratic form $(\mathbf1^{\mathsf T}v)^2=0$. Hence, if (iii) holds,
\[
0\leq\lim_{t\downarrow0}t^{-1}v^{\mathsf T}A^{\circ t}v
=v^{\mathsf T}Mv,
\]
so (i) follows.

Suppose (ii) holds. The spectral theorem gives
$\Lambda(A)=QDQ^{\mathsf T}$ with $Q$ orthogonal and $D$ diagonal
with nonnegative entries. Set $S=D^{1/2}Q^{\mathsf T}$, so that
$\Lambda(A)=S^{\mathsf T}S$. Thus the columns $p_2,\ldots,p_n$ of $S$
realize $\Lambda(A)$ as a Gram matrix, even when it is singular.
Put $p_1=0$. With $q_i=m_{i1}-m_{11}/2$, we have
$m_{ij}=q_i+q_j+\langle p_i,p_j\rangle$ and $A^{\circ t}=D_tK_tD_t$,
where $D_t=\operatorname{diag}(e^{tq_i})$ and
$K_t=(e^{t\langle p_i,p_j\rangle})$. Moreover,
\[
\|p_i-p_j\|^2=m_{ii}+m_{jj}-2m_{ij}
=\log\frac{a_{ii}a_{jj}}{a_{ij}^2}>0.
\]
Thus the $p_i$ are distinct. For any $z\in\R^n$,
\[
z^{\mathsf T}K_tz
=\sum_{m=0}^{\infty}\frac{t^m}{m!}
\left\|\sum_i z_i p_i^{\otimes m}\right\|^2,
\]
Here $p_i^{\otimes m}$ denotes the $m$-fold tensor product of $p_i$,
with $p_i^{\otimes0}=1$. The natural tensor-product inner product satisfies
$\langle p_i^{\otimes m},p_j^{\otimes m}\rangle
=\langle p_i,p_j\rangle^m$,
so the displayed identity follows by expanding the exponential.
The term for $m=0$ is $(\sum_i z_i)^2$.
If this expression is zero, then $\sum_i z_iP(p_i)=0$ for every polynomial
$P$. For each $j$, the polynomial
\[
P_j(x)=\prod_{i\ne j}
\frac{\langle x-p_i,p_j-p_i\rangle}{\|p_j-p_i\|^2}
\]
is well defined because the $p_i$ are distinct. It is one at $p_j$
and zero at the other points. Hence $z_j=0$ for every $j$.
This proves $K_t\succ0$ and therefore (iii), including when
$\Lambda(A)$ is singular.
\end{proof}

\begin{theorem}[Recursive symmetric criterion]\label{thm:recursive-ETP}
Let $n\geq3$ and let $A$ be a symmetric $n\times n$ totally positive matrix.
Then $A\in\ETP$ if and only if $\Lambda(A)\succeq0$ and each contiguous
$(n-1)\times(n-1)$ submatrix of $A$ belongs to $\ETP$.
\end{theorem}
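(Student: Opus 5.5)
Necessity is the easy direction. If $A\in\ETP$, then every square submatrix of $A^{\circ t}$ is TP for all $t>0$. Proposition~\ref{prop:submatrix} therefore puts each contiguous $(n-1)\times(n-1)$ submatrix in $\ETP$.

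Next, $A^{\circ t}$ is then a symmetric TP matrix. Its principal minors are all positive, so it is positive definite for every $t>0$. Total positivity of $A$ gives $a_{ii}a_{jj}>a_{ij}^2$ for $i\ne j$, because these are principal $2\times2$ minors. Lemma~\ref{lem:toeplitz-strict-ID}, (iii)$\Rightarrow$(ii), then yields $\Lambda(A)\succeq0$.

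For sufficiency, fix $t>0$ and aim for Fekete's criterion: $A^{\circ t}$ is TP as soon as all its contiguous square minors are positive. Any contiguous $k\times k$ submatrix with $k\le n-1$ lies inside one of the two contiguous $(n-1)\times(n-1)$ submatrices, namely rows and columns $1..n-1$ or $2..n$ intersected appropriately. I would check this as follows. A contiguous block with rows $I$ and columns $J$, each an interval of length $k\le n-1$, fits inside the contiguous submatrix with rows $I'\supseteq I$ and columns $J'\supseteq J$, where $I',J'$ are intervals of length $n-1$. Such intervals exist because $k\le n-1$. The contiguous $(n-1)\times(n-1)$ submatrices include the non-principal ones, such as rows $1..n-1$ with columns $2..n$, and the hypothesis covers all of them. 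Each of these lies in $\ETP$, so the minor is positive at exponent $t$.

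That leaves the single $n\times n$ minor $\det(A^{\circ t})$. Here I would apply Lemma~\ref{lem:toeplitz-strict-ID}, (ii)$\Rightarrow$(iii): the hypothesis $a_{ii}a_{jj}>a_{ij}^2$ again comes from the TP assumption on $A$, and together with $\Lambda(A)\succeq0$ it gives $A^{\circ t}\succ0$, hence $\det(A^{\circ t})>0$.

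The main obstacle is confirming that Fekete's criterion, in the contiguous form cited from \cite{FallatJohnson2011,Pinkus2010}, needs only contiguous minors of every order up to $n$, and that the hypothesis on all contiguous $(n-1)\times(n-1)$ submatrices (principal and non-principal) covers every contiguous minor of order below $n$. Once that is clear, the positive-definite part carries the top minor, and the argument is complete.
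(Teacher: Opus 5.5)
Your proposal is correct and follows the paper's proof essentially step for step. Necessity comes from positive definiteness of each $A^{\circ t}$ together with Lemma~\ref{lem:toeplitz-strict-ID}. Sufficiency comes from the contiguous-minor criterion: the top minor is handled by Lemma~\ref{lem:toeplitz-strict-ID} (ii)$\Rightarrow$(iii), and every smaller contiguous block lies inside a contiguous $(n-1)\times(n-1)$ block. Your passing phrase ``one of the two'' such blocks is a slip, since there are four (two principal, two nonprincipal), but your interval argument right afterwards already handles this correctly.
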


\begin{proof}
If $A\in\ETP$, then for every $t>0$ the symmetric matrix
$A^{\circ t}$ has positive leading principal minors. By Sylvester's
criterion it is positive definite, so
Lemma~\ref{lem:toeplitz-strict-ID} gives $\Lambda(A)\succeq0$.
Its submatrices are ETP as well. Conversely, that lemma gives
$\det(A^{\circ t})>0$ for every $t>0$. Each smaller contiguous square
submatrix is contained in a contiguous $(n-1)\times(n-1)$ submatrix,
so its determinant is positive by hypothesis. The contiguous-minor criterion
then gives $A^{\circ t}\in\TP$ for every $t>0$.
\end{proof}

Consider the normalized symmetric Toeplitz matrix
\begin{equation}\label{eq:toeplitz-n}
T_n=(e^{-r_{|i-j|}})_{i,j=1}^n,\qquad r_0=0,
\end{equation}
and set
\[
G_n=(r_i+r_j-r_{|i-j|})_{i,j=1}^{n-1},\qquad
B_{n-1}=T_n[\{1,\ldots,n-1\};\{2,\ldots,n\}].
\]
Here $G_n=\Lambda(T_n)$. Its four contiguous blocks of order $n-1$ are
$T_{n-1},T_{n-1},B_{n-1},B_{n-1}^{\mathsf T}$. Consequently:

\begin{corollary}\label{cor:toeplitz-recursion}
If $T_n$ in \eqref{eq:toeplitz-n} is totally positive, then
\[
T_n\in\ETP\quad\Longleftrightarrow\quad
G_n\succeq0,\quad T_{n-1}\in\ETP,\quad B_{n-1}\in\ETP.
\]
\end{corollary}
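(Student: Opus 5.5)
This is a direct specialization of Theorem~\ref{thm:recursive-ETP} to $A=T_n$. The work is to identify $\Lambda(T_n)$ with $G_n$ and to list the contiguous $(n-1)\times(n-1)$ submatrices of $T_n$ up to ETP-equivalence.

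\emph{Step 1: the core.} Since $T_n$ already has ones on its diagonal, the normalization uses the entries $b_{11}=1$, $b_{i1}=e^{-r_{i-1}}$ and $b_{1j}=e^{-r_{j-1}}$. Hence
\[
\log\frac{b_{11}b_{ij}}{b_{i1}b_{1j}}=r_{i-1}+r_{j-1}-r_{|i-j|}.
\]
Re-indexing $i,j=2,\dots,n$ as $i-1,j-1\in\{1,\dots,n-1\}$ gives exactly $G_n$. Thus $\Lambda(T_n)=G_n$, and the condition $\Lambda(T_n)\succeq0$ in Theorem~\ref{thm:recursive-ETP} becomes $G_n\succeq0$.

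\emph{Step 2: the contiguous blocks.} A contiguous $(n-1)\times(n-1)$ submatrix uses rows $\{1,\dots,n-1\}$ or $\{2,\dots,n\}$, and likewise for columns. This gives four blocks.
\begin{itemize}[label=$\cdot$]
\item Rows $\{1..n-1\}$, columns $\{1..n-1\}$: this is $T_{n-1}$.
\item Rows $\{2..n\}$, columns $\{2..n\}$: by the Toeplitz structure the entries depend only on $i-j$, so this is again $T_{n-1}$.
\item Rows $\{1..n-1\}$, columns $\{2..n\}$: this is $B_{n-1}$.
\item Rows $\{2..n\}$, columns $\{1..n-1\}$: its $(i,j)$ entry is $e^{-r_{|i+1-j|}}=e^{-r_{|j-1-i|}}$, which is the $(j,i)$ entry of $B_{n-1}$. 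So this block is $B_{n-1}^{\mathsf T}$.
\end{itemize}
By Proposition~\ref{prop:transformations}(ii), $\tau(B_{n-1}^{\mathsf T})=\tau(B_{n-1})$. Since $A\in\ETP$ is equivalent to $\tau(A)=0$, we get $B_{n-1}^{\mathsf T}\in\ETP$ if and only if $B_{n-1}\in\ETP$. (One can also see this directly: minors of a transpose are the corresponding minors of the original, so ETP is transpose-invariant.)

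\emph{Step 3: apply the theorem.} The matrix $T_n$ is symmetric, and it is TP by hypothesis. For $n\geq3$, Theorem~\ref{thm:recursive-ETP} says $T_n\in\ETP$ if and only if $\Lambda(T_n)\succeq0$ and all four contiguous blocks are ETP. By Steps 1 and 2, this is exactly the triple condition $G_n\succeq0$, $T_{n-1}\in\ETP$, $B_{n-1}\in\ETP$. Note that $B_{n-1}$ is not symmetric, which is harmless: the theorem only asks that these blocks be ETP, with no symmetry required.

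There is no real obstacle here. The only point needing care is the index bookkeeping in Step 2: checking that the lower-left block is $B_{n-1}^{\mathsf T}$, and that the lower-right block equals $T_{n-1}$ because of the Toeplitz structure. The implicit assumption $n\geq3$ is inherited from Theorem~\ref{thm:recursive-ETP}.
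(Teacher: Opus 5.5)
Your proposal is correct and follows the paper's argument. The paper likewise specializes Theorem~\ref{thm:recursive-ETP} to $T_n$, observes that $\Lambda(T_n)=G_n$, and lists the four contiguous blocks of order $n-1$ as $T_{n-1},T_{n-1},B_{n-1},B_{n-1}^{\mathsf T}$; your index checks, the transpose-invariance of $\ETP$, and the remark that $n\geq3$ is inherited simply spell out what the paper states in one line.
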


This is an exact recursive criterion, not an explicit classification in every
order: $B_{n-1}$ need not be symmetric, so the same symmetric criterion
cannot simply be iterated on that block.

\subsection{Explicit classification in order four}

\begin{theorem}[Symmetric Toeplitz order-four classification]
\label{thm:toeplitz4}
Let
\[
T_4=\begin{pmatrix}
1&a&b&c\\a&1&a&b\\b&a&1&a\\c&b&a&1
\end{pmatrix}
\]
be totally positive, and put $\alpha=-\log a$, $\beta=-\log b$,
$\gamma=-\log c$. Then
\begin{equation}\label{eq:toeplitz4-classification}
T_4\in\ETP\quad\Longleftrightarrow\quad
\beta\leq4\alpha,\qquad
\frac{(\beta-\alpha)^2}{\alpha}\leq\gamma
\leq\alpha+\frac{\beta^2}{2\alpha}.
\end{equation}
\end{theorem}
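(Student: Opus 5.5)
The plan is to apply Corollary~\ref{cor:toeplitz-recursion} with $n=4$. In the notation of \eqref{eq:toeplitz-n} we have $r_1=\alpha$, $r_2=\beta$, $r_3=\gamma$, all positive. Then $T_4\in\ETP$ if and only if three conditions hold: $G_4\succeq0$, $T_3\in\ETP$, and $B_3\in\ETP$. I will turn each of these into an explicit inequality and then show that, together, they are exactly \eqref{eq:toeplitz4-classification}. Every $3\times3$ submatrix involved is a submatrix of the totally positive $T_4$, so it is itself totally positive. Hence Theorem~\ref{thm:classification} applies to it. By Proposition~\ref{prop:zeroexpansion}, its ETP condition reads $\det\Lambda\ge0$, and this quantity is diagonal-scaling invariant, so no explicit normalization is needed.

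\emph{Step 1 ($T_3$).} By definition $G_3=\Lambda(T_3)=\begin{pmatrix}2\alpha&\beta\\ \beta&2\beta\end{pmatrix}$. Hence $\det\Lambda(T_3)=4\alpha\beta-\beta^2$, and this is nonnegative if and only if $\beta\le4\alpha$. Alternatively, I can use Theorem~\ref{thm:recursive-ETP} with $n=3$; its $2\times2$ blocks are automatically ETP. Either way, $T_3\in\ETP\iff\beta\le4\alpha$.

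\emph{Step 2 ($B_3$).} The block is
\[
B_3=\begin{pmatrix}a&b&c\\1&a&b\\a&1&a\end{pmatrix}.
\]
A direct computation from the definition of $\Lambda$ gives
\[
\Lambda(B_3)=\begin{pmatrix}\beta-2\alpha&\gamma-\alpha-\beta\\ \beta&\gamma-\alpha\end{pmatrix}.
\]
Its determinant simplifies to $\beta^2+2\alpha^2-2\alpha\gamma$. Therefore $B_3\in\ETP\iff\gamma\le\alpha+\beta^2/(2\alpha)$, which is the upper bound in \eqref{eq:toeplitz4-classification}.

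\emph{Step 3 ($G_4$), the main obstacle.} Here
\[
G_4=\begin{pmatrix}2\alpha&\beta&\alpha+\gamma-\beta\\ \beta&2\beta&\beta+\gamma-\alpha\\ \alpha+\gamma-\beta&\beta+\gamma-\alpha&2\gamma\end{pmatrix}.
\]
I will test $G_4\succeq0$ through its principal minors. The leading $2\times2$ minor is $G_3$, whose nonnegativity is already Step~1. The remaining $2\times2$ minors, such as $4\alpha\gamma-(\alpha+\gamma-\beta)^2$, should follow from the total positivity of $T_4$ together with the other conditions; I will verify this by hand.

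The crux is $\det G_4$, which is a quadratic polynomial in $\gamma$. I expect it to factor as
\[
\bigl(\alpha\gamma-(\beta-\alpha)^2\bigr)\cdot\Phi(\alpha,\beta,\gamma),
\]
where $\Phi$ is a second factor, plausibly linear in $\gamma$ and proportional to $2\alpha^2+\beta^2-2\alpha\gamma$, i.e. to the Step~2 quantity. I will first try to confirm such a factorization symbolically, for instance by checking that $\det G_4$ vanishes at $\gamma=(\beta-\alpha)^2/\alpha$. Once it is confirmed, the sign analysis is straightforward. On the region where Steps~1 and~2 hold, the second factor has a fixed nonnegative sign, so $\det G_4\ge0$ is equivalent to $\gamma\ge(\beta-\alpha)^2/\alpha$.

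The delicate point is the boundary case $\gamma=\alpha+\beta^2/(2\alpha)$, where the second factor vanishes. There I need a separate argument that positive semidefiniteness still forces, or at least is consistent with, the lower bound. If the factorization turns out differently, I will instead use a Schur complement with respect to the pivot $2\alpha>0$ and analyze the resulting $2\times2$ matrix.

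Finally, I will combine the three conditions. For the converse direction, I will check that the three inequalities in \eqref{eq:toeplitz4-classification} make all principal minors of $G_4$ nonnegative. Corollary~\ref{cor:toeplitz-recursion} then yields the equivalence.
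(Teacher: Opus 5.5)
Your overall architecture matches the paper's: Corollary~\ref{cor:toeplitz-recursion} with $n=4$. Your cores $\Lambda(T_3)$ and $\Lambda(B_3)$, and the resulting conditions $\beta\le4\alpha$ and $\gamma\le\alpha+\beta^2/(2\alpha)$, are correct.

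The gap is in Step~3. You correctly identify it as the crux, but your predicted factorization is wrong. Expanding your (correct) $G_4$ gives
\[
\det G_4=-2\alpha\gamma^2+(4\alpha^2+2\beta^2)\gamma-2(\beta-\alpha)^2(\alpha+2\beta)
=2\bigl(\alpha\gamma-(\beta-\alpha)^2\bigr)(\alpha+2\beta-\gamma).
\]
So the second factor is $\alpha+2\beta-\gamma$, not a multiple of the $B_3$ quantity $2\alpha^2+\beta^2-2\alpha\gamma$. Their roots in $\gamma$, namely $\alpha+2\beta$ and $\alpha+\beta^2/(2\alpha)$, coincide only when $\beta=4\alpha$. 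Your proposed check, that $\det G_4$ vanishes at $\gamma=(\beta-\alpha)^2/\alpha$, would pass, because it tests only the first factor; it would not catch the error. Consequently the sign analysis you describe (``the second factor is nonnegative by Step~2'') is not available as stated. The missing step links the two upper bounds. Since $\beta\le4\alpha$ gives $\beta^2/(2\alpha)\le2\beta$, Step~2 implies $\gamma\le\alpha+2\beta$, with equality only if $\beta=4\alpha$ and $\gamma=9\alpha$. This is exactly the final sentence of the paper's proof.

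Your ``delicate boundary'' is also misplaced. At $\gamma=\alpha+\beta^2/(2\alpha)$ with $\beta<4\alpha$, the true second factor is strictly positive, so nothing special happens there. The only degenerate point is $\beta=4\alpha$. There $\det G_4=-2\alpha(\gamma-9\alpha)^2$, so $\det G_4\ge0$ forces $\gamma=9\alpha$, and then $G_4=2\alpha(1,2,3)^{\mathsf T}(1,2,3)$ has rank one.

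With these corrections your route closes, and you do not need the off-leading $2\times2$ minors at all. For $\beta<4\alpha$, the leading minors $2\alpha$ and $\beta(4\alpha-\beta)$ are positive, so $G_4\succeq0$ iff $\det G_4\ge0$. Given Step~2, this holds iff $\alpha\gamma\ge(\beta-\alpha)^2$.

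The paper avoids the determinant altogether. Since $JR=RJ$, the zero-sum subspace splits into reversal-antisymmetric vectors $(u,v,-v,-u)$ and reversal-symmetric ones $(u,-u,-u,u)$. On these, the quadratic form directly yields the two conditions $\alpha\gamma\ge(\beta-\alpha)^2$ and $\gamma\le\alpha+2\beta$, which are precisely your two factors.
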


\begin{proof}
Total positivity implies
$\alpha>0$, $\beta>2\alpha$, and $\gamma>2\beta-\alpha$.
The two distinct contiguous blocks of order three are
\[
T_3=\begin{pmatrix}1&a&b\\a&1&a\\b&a&1\end{pmatrix},
\qquad
B_3=\begin{pmatrix}a&b&c\\1&a&b\\a&1&a\end{pmatrix}.
\]
Their logarithmic cores are, respectively,
\[
\begin{pmatrix}2\alpha&\beta\\\beta&2\beta\end{pmatrix},
\qquad
\begin{pmatrix}
\beta-2\alpha&\gamma-\alpha-\beta\\
\beta&\gamma-\alpha
\end{pmatrix}.
\]
Theorem~\ref{thm:classification} therefore gives
\[
T_3\in\ETP\iff\beta\leq4\alpha,
\qquad
B_3\in\ETP\iff\gamma\leq\alpha+\frac{\beta^2}{2\alpha}.
\]

To evaluate conditional positivity, let
$R=(r_{|i-j|})_{i,j=1}^4$, where $(r_0,r_1,r_2,r_3)=(0,\alpha,\beta,\gamma)$.
Since $\log^{\circ}T_4=-R$, we need $v^{\mathsf T}Rv\leq0$ on the
zero-sum subspace. On the antisymmetric vectors $(u,v,-v,-u)$,
\[
-\tfrac12 (u,v,-v,-u)R(u,v,-v,-u)^{\mathsf T}
=\gamma u^2+2(\beta-\alpha)uv+\alpha v^2.
\]
On the symmetric zero-sum vectors $(u,-u,-u,u)$ the quadratic form of $R$
is $2(\gamma-\alpha-2\beta)u^2$. To separate these quadratic forms,
let $J$ be the reversal matrix. Since $JR=RJ$, a symmetric vector $s$
and an antisymmetric vector $a$ satisfy
\[
s^{\mathsf T}Ra=(Js)^{\mathsf T}R(Ja)=-s^{\mathsf T}Ra=0.
\]
Every zero-sum vector is uniquely a sum of an antisymmetric vector and
a symmetric zero-sum vector. Thus conditional positivity is equivalent to
$\alpha\gamma\geq(\beta-\alpha)^2$ and
$ \gamma\leq\alpha+2\beta$.
If $\beta\leq4\alpha$, the upper bound from $B_3$ implies the last inequality,
because $\beta^2/(2\alpha)\leq2\beta$.
Corollary~\ref{cor:toeplitz-recursion} now proves the equivalence, with the
nonstrict inequalities included.
\end{proof}

\begin{example}[Gaussian boundary in every order]\label{ex:toeplitz-gaussian}
For $h>0$, the matrices $(e^{-h(i-j)^2})_{i,j=1}^n$ are ETP for every $n$.
Indeed, their positive Hadamard powers are positive diagonal scalings of
$(e^{2htij})_{i,j=1}^n$, which is strictly totally positive by the generalized
Vandermonde property. In order four this gives
$(\alpha,\beta,\gamma)=(h,4h,9h)$, on the boundary of
\eqref{eq:toeplitz4-classification}; the logarithmic core is singular.
\end{example}

\subsection{The additional obstruction in order five}

Write $T_5$ as in \eqref{eq:toeplitz-n}, with
$(r_1,r_2,r_3,r_4)=(\alpha,\beta,\gamma,\eta)$, and assume that $T_5$ is TP.
The remaining nonprincipal block is
\begin{equation*}\label{eq:toeplitz-B4}
B_4=\begin{pmatrix}
a&b&c&d\\1&a&b&c\\a&1&a&b\\b&a&1&a
\end{pmatrix},\qquad
(a,b,c,d)=(e^{-\alpha},e^{-\beta},e^{-\gamma},e^{-\eta}).
\end{equation*}

\begin{proposition}\label{prop:toeplitz5-CPD}
The condition $G_5\succeq0$ is equivalent to
\begin{equation}\label{eq:toeplitz5-blocks}
\begin{pmatrix}\eta&\gamma-\alpha\\\gamma-\alpha&\beta\end{pmatrix}\succeq0,
\qquad
\begin{pmatrix}
4\beta-\eta&\alpha+2\beta-\gamma\\
\alpha+2\beta-\gamma&4\alpha-\beta
\end{pmatrix}\succeq0.
\end{equation}
If $\beta<4\alpha$, then these conditions are equivalent to
\[
\frac{(\gamma-\alpha)^2}{\beta}\leq\eta
\leq4\beta-\frac{(\alpha+2\beta-\gamma)^2}{4\alpha-\beta}.
\]
If $T_4\in\ETP$ and $\beta=4\alpha$, then they force
$\gamma=9\alpha$ and $\eta=16\alpha$.
\end{proposition}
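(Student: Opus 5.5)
The plan is to repeat the argument used for Theorem~\ref{thm:toeplitz4}, now in dimension five. Since $G_5=\Lambda(T_5)$, Lemma~\ref{lem:toeplitz-strict-ID} shows that $G_5\succeq0$ is equivalent to conditional positive semidefiniteness of $\log^{\circ}T_5=-R$. Here $R=(r_{|i-j|})_{i,j=1}^5$ with $(r_0,\dots,r_4)=(0,\alpha,\beta,\gamma,\eta)$. So we must show $-\tfrac12 v^{\mathsf T}Rv\geq0$ for every zero-sum $v\in\R^5$.

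Let $J$ be the $5\times5$ reversal matrix. As before, $JR=RJ$, so symmetric and antisymmetric vectors are $R$-orthogonal. Every zero-sum vector splits uniquely into an antisymmetric part and a symmetric zero-sum part. The antisymmetric vectors are $(u,v,0,-v,-u)$. Summing $r_{|i-j|}x_ix_j$ over pairs gives
\[
-\tfrac12(u,v,0,-v,-u)R(u,v,0,-v,-u)^{\mathsf T}=\eta u^2+2(\gamma-\alpha)uv+\beta v^2 .
\]
The pairs $(1,5)$, $(2,4)$, $(1,4)$, $(2,5)$, $(1,2)$ and $(4,5)$ contribute, and pairs involving the middle coordinate vanish. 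This is the first matrix in \eqref{eq:toeplitz5-blocks}.

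The symmetric zero-sum vectors are $(u,v,w,v,u)$ with $w=-2u-2v$. We substitute $w$ and expand $-\tfrac12 v^{\mathsf T}Rv$ as a quadratic form in $(u,v)$. The coefficients should come out as $4\beta-\eta$, $\alpha+2\beta-\gamma$ and $4\alpha-\beta$, which is the second matrix. This bookkeeping is the only real computational step, and the step most likely to contain slips. I would check it on the Gaussian boundary example, $(\alpha,\beta,\gamma,\eta)=h(1,4,9,16)$, where both forms must be degenerate. Positive semidefiniteness of the two forms on their respective subspaces is then equivalent to $G_5\succeq0$.

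For the scalar form, assume $\beta<4\alpha$. Total positivity gives $\beta>0$, so each $2\times2$ condition reduces to a Schur complement inequality. The first gives $\eta\beta\geq(\gamma-\alpha)^2$. The second, dividing by $4\alpha-\beta>0$, gives $\eta\leq4\beta-(\alpha+2\beta-\gamma)^2/(4\alpha-\beta)$. The diagonal condition $4\beta-\eta\geq0$ is implied by this upper bound.

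Finally, suppose $\beta=4\alpha$. Then the second matrix has a zero diagonal entry, so PSD forces its off-diagonal entry to vanish: $\gamma=\alpha+2\beta=9\alpha$. It also forces $\eta\leq4\beta=16\alpha$. The first condition then gives $\eta\geq(\gamma-\alpha)^2/\beta=64\alpha^2/(4\alpha)=16\alpha$, hence $\eta=16\alpha$. The hypothesis $T_4\in\ETP$ is consistent with this, since Theorem~\ref{thm:toeplitz4} with $\beta=4\alpha$ only gives $\gamma\leq9\alpha$ and $\gamma\geq9\alpha$. It can be cited to confirm $\gamma=9\alpha$ independently.
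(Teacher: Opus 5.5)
Your proposal is correct and follows the paper's proof. Both arguments reduce $G_5\succeq0$ to conditional positive semidefiniteness of $-R$, split the zero-sum space into reversal-antisymmetric and reversal-symmetric vectors, and read off the two $2\times2$ forms; the symmetric-part expansion you defer does come out as $(4\beta-\eta)u^2+2(\alpha+2\beta-\gamma)uv+(4\alpha-\beta)v^2$. The one small difference is at $\beta=4\alpha$: you get $\gamma=9\alpha$ from the zero diagonal entry of the second block rather than citing Theorem~\ref{thm:toeplitz4}, which shows that the hypothesis $T_4\in\ETP$ is not actually needed for that conclusion.
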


\begin{proof}
Let $R=(r_{|i-j|})_{i,j=1}^5$. Its quadratic form splits on the zero-sum
subspace into antisymmetric vectors $(u,v,0,-v,-u)$ and symmetric vectors
$(u,v,-2u-2v,v,u)$. On these two subspaces, $-\frac12v^{\mathsf T}Rv$
has the two matrices in \eqref{eq:toeplitz5-blocks}, respectively.
The scalar inequalities follow from the $2\times2$ semidefinite criterion.
For $\beta=4\alpha$, Theorem~\ref{thm:toeplitz4} gives $\gamma=9\alpha$;
the two blocks then require both $\eta\geq16\alpha$ and
$\eta\leq16\alpha$.
\end{proof}

Set $f(t)=\det(B_4^{\circ t})$. Direct expansion gives
\begin{equation}\label{eq:toeplitz5-f}
f(t)=P(e^{-\alpha t},e^{-\beta t},e^{-\gamma t})
-e^{-\eta t}D_3(e^{-\alpha t},e^{-\beta t}),
\end{equation}
where $D_3(a,b)=1-2a^2+2a^2b-b^2$
and
\begin{align*}
P(a,b,c)={}a^4-3a^2b+2a^2b^2+b^2-b^4
+2ac(1-b+b^2-a^2)+(a^2-b)c^2.
\end{align*}
The logarithmic core of $B_4$ is
\begin{equation*}\label{eq:toeplitz5-LB}
L_B=\begin{pmatrix}
\beta-2\alpha&\gamma-\beta-\alpha&\eta-\gamma-\alpha\\
\beta&\gamma-\alpha&\eta-\beta\\
2\beta-2\alpha&\beta+\gamma-\alpha&\beta+\eta-2\alpha
\end{pmatrix}.
\end{equation*}
For a positive $k\times k$ matrix $B=(b_{ij})$, the normalization
$n_{ij}=b_{ij}b_{11}/(b_{i1}b_{1j})$ gives the exact determinant factor
\[
c_B(t)=\left(
\frac{\prod_{i=1}^k b_{i1}\prod_{j=1}^k b_{1j}}{b_{11}^{\,k}}
\right)^t.
\]
For $B_4$, the first-column and first-row products are $a^2b$ and
$abcd$, respectively. Hence the normalization factor in
Proposition~\ref{prop:zeroexpansion} is
\[
c_{B_4}(t)=\left(\frac{b^2cd}{a}\right)^t
=e^{(\alpha-2\beta-\gamma-\eta)t},
\]
and consequently
$f(t)=e^{(\alpha-2\beta-\gamma-\eta)t}\{t^3\det L_B+O(t^4)\}$.
Since $c_{B_4}(t)=1+O(t)$, the leading coefficient remains $\det L_B$.
Although $\det L_B\geq0$ is necessary, it is not sufficient even when the
principal block is ETP and $G_5\succeq0$.

The contiguous order-three block not contained in $T_4$ is
\[
C_* = T_5[\{1,2,3\};\{3,4,5\}]
=\begin{pmatrix}b&c&d\\a&b&c\\1&a&b\end{pmatrix}.
\]
Its logarithmic core is
\[
\Lambda(C_*)=
\begin{pmatrix}
\alpha+\gamma-2\beta&\alpha+\eta-\beta-\gamma\\
\gamma-\alpha-\beta&\eta-2\beta
\end{pmatrix}.
\]
Since $T_5\in\TP$, this block is TP. Theorem~\ref{thm:classification}
therefore gives
\[
C_*\in\ETP\quad\Longleftrightarrow\quad
\det\Lambda(C_*)\geq0,
\]
that is,
$(\alpha+\gamma-2\beta)(\eta-2\beta)
\geq(\alpha+\eta-\beta-\gamma)(\gamma-\alpha-\beta)$.
This condition must be checked separately from $T_4\in\ETP$.

\begin{example}[Insufficiency of principal and leading-coefficient tests]
\label{ex:toeplitz5-counterexample}
Take
\[
(\alpha,\beta,\gamma,\eta)=(1,383/100,208/25,358/25).
\]
The resulting $T_5$ is TP. To give reproducible strict bounds, the minimum
of its contiguous minors of each order exceeds the following rational number:
\[
\begin{array}{c|ccccc}
\text{order}&1&2&3&4&5\\\hline
\text{lower bound}&6\cdot10^{-7}&4.6\cdot10^{-8}&6.4\cdot10^{-6}&1/100&13/25
\end{array}
\]
These bounds can be verified using rational arithmetic as follows. For each
$x=r_{|i-j|}$ replace $e^{-x}$ by
$S_{180}(x)=\sum_{k=0}^{180}(-x)^k/k!$. The alternating tail, whose terms
are decreasing from this index onward, has absolute value at most
$x^{181}/181!$. If $\varepsilon$ is the largest entry error, the determinant
error for an order-$k$ block is at most
$k!\,k\,2^{k-1}\varepsilon$, by telescoping the products in the Leibniz
formula; both the exact and approximating entries have absolute value below
$2$. This proves each displayed bound by rational comparisons.
The contiguous-minor criterion proves $T_5\in\TP$.

The conditions for $T_4\in\ETP$ hold since
\[
\beta<4,\qquad (\beta-1)^2=8.0089<8.32<8.33445=1+\beta^2/2.
\]
The diagonal entries of both blocks in \eqref{eq:toeplitz5-blocks} are positive,
and their determinants are $1.2632$ and $0.0544$. Thus $G_5\succeq0$.
Moreover, $\det L_B=799/250000>0$. Nevertheless,
\[
-1.963\cdot10^{-6}<f(1/25)<-1.962\cdot10^{-6}.
\]
The same rational-tail procedure, now with $x=r_{|i-j|}/25$, certifies this
interval. Therefore $T_5\notin\ETP$ despite all three conditions above.
For the same parameters, however,
\[
\Lambda(C_*)=\begin{pmatrix}1.66&3.17\\3.49&6.66\end{pmatrix},
\qquad
\det\Lambda(C_*)=-\frac{77}{10000}<0.
\]
Thus $C_*\notin\ETP$ as well: its determinant is negative for all
sufficiently small positive powers, by Proposition~\ref{prop:zeroexpansion}.
The example proves insufficiency of the three stated conditions; it does
not establish an order-four obstruction after all order-three blocks
have been checked to be ETP.
\end{example}

\subsection{A finite critical-point test and an open problem}

\begin{proposition}[Finite order-five test]\label{prop:toeplitz5-finite}
Assume $T_5\in\TP$. Let $f$ be as in \eqref{eq:toeplitz5-f} and put
\[
g(t)=e^{4\alpha t}f(t),\qquad
\mathcal C=\{s>0:g'(s)=0\}.
\]
Then $\mathcal C$ contains at most eleven points, and
\[
T_5\in\ETP\quad\Longleftrightarrow\quad
\begin{gathered}
G_5\succeq0,\quad T_4\in\ETP,\quad C_*\in\ETP,\\
g(s)>0\ \text{for every }s\in\mathcal C.
\end{gathered}
\]
The condition $C_*\in\ETP$ is equivalently the explicit inequality
\[
(\alpha+\gamma-2\beta)(\eta-2\beta)
\geq(\alpha+\eta-\beta-\gamma)(\gamma-\alpha-\beta).
\]
\end{proposition}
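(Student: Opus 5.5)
The plan is to reduce everything to Corollary~\ref{cor:toeplitz-recursion} with $n=5$. That corollary gives
\[
T_5\in\ETP\iff G_5\succeq0,\quad T_4\in\ETP,\quad B_4\in\ETP ,
\]
so the work is to show that, given $T_5\in\TP$ and $T_4\in\ETP$, the condition $B_4\in\ETP$ is equivalent to $C_*\in\ETP$ together with positivity of $g$ on $\mathcal C$.

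First I will list the contiguous square submatrices of $B_4=T_5[\{1,2,3,4\};\{2,3,4,5\}]$.

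\begin{itemize}[label=$\circ$]
\item Those of order at most two are entries or $2\times2$ minors of a TP matrix. Their thresholds are zero, as in the proof of Corollary~\ref{cor:fourbyfour}.
\item The order-three contiguous blocks are $T_5[\{1,2,3\};\{2,3,4\}]$, $T_5[\{2,3,4\};\{2,3,4\}]$, $T_5[\{2,3,4\};\{3,4,5\}]$ and $T_5[\{1,2,3\};\{3,4,5\}]=C_*$.
\item By the Toeplitz structure, the first three are the blocks $B_3$ and $T_3$ of $T_4$. They are therefore ETP whenever $T_4\in\ETP$.
\end{itemize}

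Every Hadamard power of $B_4$ is again a matrix whose contiguous minors are these same minors raised entrywise. So the contiguous-minor criterion gives
\[
B_4\in\ETP \iff C_*\in\ETP \text{ and } f(t)>0 \text{ for all } t>0 .
\]
Necessity of each listed condition is immediate, since submatrices of ETP matrices are ETP. The explicit form of $C_*\in\ETP$ is Theorem~\ref{thm:classification} applied to $\Lambda(C_*)$, as computed before Example~\ref{ex:toeplitz5-counterexample}.

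Next, since $e^{4\alpha t}>0$, the condition $f>0$ on $(0,\infty)$ is equivalent to $g>0$ there. I will read off the exponents from \eqref{eq:toeplitz5-f}.

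\begin{itemize}[label=$\circ$]
\item $P$ contributes the eleven monomials $a^4$, $a^2b$, $a^2b^2$, $b^2$, $b^4$, $ac$, $abc$, $ab^2c$, $a^3c$, $a^2c^2$, $bc^2$.
\item $e^{-\eta t}D_3$ contributes the four monomials $d$, $a^2d$, $a^2bd$, $b^2d$.
\end{itemize}

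Thus $g$ is an exponential polynomial with at most $15$ distinct exponents, one of which is $0$ (from $a^4e^{4\alpha t}$). Differentiating kills that constant term, so $g'$ has at most $14$ exponential terms. Laguerre's rule \cite[Part~V, Chapter~1]{PolyaSzego1976} then bounds its real zeros, counted with multiplicity, by $13$.

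By Proposition~\ref{prop:zeroexpansion}, $f(t)=O(t^3)$, hence $g(t)=O(t^3)$. It follows that $g'$ has a zero of multiplicity at least two at the origin, leaving at most $11$ positive zeros. The main obstacle I expect is making sure that no coincidence of exponents raises the count and that $g'$ is not identically zero. Coinciding exponents only merge terms and reduce the count. If $g'\equiv0$, then $g\equiv g(0)=0$, which contradicts eventual total positivity (Theorem~\ref{thm:FJcritical}) applied to $T_5$.

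Finally, for the sufficiency of ``$g>0$ on $\mathcal C$'', I will argue as follows.

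\begin{itemize}[label=$\circ$]
\item Since $T_5\in\TP$, Theorem~\ref{thm:FJcritical} gives $g(t)>0$ for all large $t$.
\item Also $g(t)\to0$ as $t\downarrow0$.
\item Suppose $g(t_0)\le0$ for some $t_0>0$. Then $g$ attains its minimum over $(0,\infty)$ at an interior point $s>0$.
\item Indeed, if the minimum is negative, the limits at $0$ and at $\infty$ force it to be interior. If the minimum is $0$, it is attained at $t_0$ itself.
\item At such an $s$ we have $g'(s)=0$, so $s\in\mathcal C$ and $g(s)\le0$, contradicting the hypothesis.
\end{itemize}

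Necessity is clear, because $T_5\in\ETP$ gives $f>0$ everywhere. Note that the leading coefficient $\det L_B$ is not needed in this argument; the critical-point condition already controls the behaviour near $0$.
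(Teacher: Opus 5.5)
Your proposal is correct and follows essentially the same route as the paper. You reduce via Corollary~\ref{cor:toeplitz-recursion}, identify the four contiguous order-three blocks $B_3,C_*,T_3,B_3$ of $B_4$, bound the positive zeros of $g'$ by eleven using the fifteen-term count and the double zero of $g'$ at the origin, and finish with the interior-minimum argument. The only minor difference is that you get eventual positivity of $g$ (and $g'\not\equiv0$) from Theorem~\ref{thm:FJcritical}, whereas the paper reads off $g(t)\to1$ directly from the TP inequalities, which make $4\alpha$ the unique smallest decay exponent.
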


\begin{proof}
Total positivity implies
$\alpha>0$, $\beta>2\alpha$, $\gamma>2\beta-\alpha$,
and $\eta>2\gamma-\beta$.
Expanding \eqref{eq:toeplitz5-f} gives at most fifteen exponential terms.
The unique smallest decay exponent is $4\alpha$, with coefficient one;
all other decay exponents exceed $4\alpha$ by the inequalities just stated.
Consequently $g(0)=0$ and $g(t)\to1$ as $t\to\infty$.
The derivative $g'$ is a nonzero exponential polynomial with at most fourteen
terms after equal exponents are combined. A nonzero exponential polynomial
with $m$ distinct real exponents has at most $m-1$ real zeros, counted with
multiplicity, as follows by induction using Rolle's theorem after removing
one exponential factor; see also \cite{PolyaSzego1976}.
The zero of $g'$ at the origin has multiplicity at least two, because
$f(t)=O(t^3)$. Thus $g'$ has at most eleven positive zeros.

If $g$ is negative anywhere in $(0,\infty)$, its value at zero and its positive
limit imply that it attains a negative minimum at an interior point.
If $g$ is nonnegative but vanishes at a positive point, that point is a
critical point with value zero. Hence $g>0$ on $(0,\infty)$ if and only if
it is positive on $\mathcal C$.
The four contiguous blocks of order three of $B_4$, in top-left,
top-right, bottom-left, and bottom-right order, are
$B_3,C_*,T_3,B_3$. If $T_4\in\ETP$ and $C_*\in\ETP$, they are all ETP:
$B_3$ and $T_3$ are submatrices of $T_4$ and hence are ETP; the 
assumption $C_*\in\ETP$ supplies the remaining block. Every smaller contiguous square submatrix of $B_4$
is contained in one of these order-three blocks. Thus all its minors
required by the contiguous-minor criterion, except its full determinant,
are positive for every $t>0$. Under these hypotheses,
$B_4\in\ETP$ if and only if $f(t)>0$ for every $t>0$.
Since $e^{4\alpha t}>0$, this is equivalent to $g>0$ on $(0,\infty)$,
and hence to the stated critical-point test. Together with $G_5\succeq0$,
Corollary~\ref{cor:toeplitz-recursion} proves sufficiency.
Conversely, if $T_5\in\ETP$, its blocks $T_4$, $C_*$, and $B_4$ are ETP,
the same corollary gives $G_5\succeq0$, and $g(s)>0$ at every
$s\in\mathcal C$. Finally, the explicit condition for $C_*$ follows
from Theorem~\ref{thm:classification} and its logarithmic core above.
\end{proof}

\begin{remark}\label{rem:toeplitz5-twozeros}
For Example~\ref{ex:toeplitz5-counterexample}, the decay exponents in
increasing order and their coefficients are
\[
\begin{array}{c|rrrrrrrr}
\lambda&4&5.83&7.66&9.32&9.66&11.32&13.15&14.32\\
c_\lambda&1&-3&1&2&2&-2&-2&-1
\end{array}
\]
\[
\begin{array}{c|rrrrrrr}
\lambda&15.32&16.32&16.98&18.64&20.15&20.47&21.98\\
c_\lambda&-1&2&2&1&-2&-1&1
\end{array}
\]
There are six sign changes, so Laguerre's rule bounds the total multiplicity
of real zeros of $f$ by six. The origin has multiplicity exactly three.
The function is positive near zero and for large $t$, but negative at $1/25$.
It therefore has at least two positive zeros. Their total multiplicity must
be even, since the signs at the two ends agree. The remaining bound of three
shows that there are exactly two positive zeros, both simple.
Their approximate values are
\[
t_-=0.001138565875\ldots,\qquad t_+=0.061455497185\ldots.
\]
Thus $f(t)<0$ exactly on $(t_-,t_+)$. All principal minors of every positive
power are positive, but a nonprincipal minor fails in this intermediate range.
\end{remark}

Proposition~\ref{prop:toeplitz5-finite} is a finite test involving zeros of an
exponential polynomial. It does not express the answer by polynomial
inequalities in the logarithmic parameters. Example~\ref{ex:toeplitz5-counterexample}
shows why the semidefinite and leading-coefficient conditions alone do not
provide such an answer.

%\begin{remark}[Open problem]\label{open:toeplitz5}
%Find an explicit necessary and sufficient condition on
%$(\alpha,\beta,\gamma,\eta)$ for a symmetric totally positive Toeplitz matrix
%of order five to be eternally totally positive. In particular, determine
%whether, within the TP domain, this condition can be expressed by finitely
%many polynomial inequalities in these logarithmic parameters.
%\end{remark}

\section{A finite Hankel form of the classical infinite-divisibility criterion}
\label{sec:hankel-ETP}

For Hankel matrices, the translated block that controls nonprincipal minors
is again symmetric. This allows a nonrecursive criterion in every finite
order. We present it as a finite strict-total-positivity formulation of
classical infinite-divisibility results, not as a new characterization of
infinitely divisible moment sequences.

Berg \cite[Theorem~2.4]{Berg2007} characterizes the positive normalized sequences
whose every positive power is a Stieltjes moment sequence, through an integral
representation of their logarithms. Benhida, Curto, and Exner
\cite[Proposition~1.4 and Theorem~2.13]{BenhidaCurtoExner2021} explicitly use
\[
\delta_k=\log\frac{\gamma_k\gamma_{k+2}}{\gamma_{k+1}^2}
\]
and the conversion of conditional positivity into positivity of second
finite differences. Their weighted-shift setting involves infinite sequences.
Below we require only a fixed finite Hankel matrix and its translated block,
and use the strictness hypothesis to obtain positivity of every minor.
The terminology in \cite{BenhidaCurtoExner2021} calls semidefinite matrices
``positive definite''; here we distinguish $\succeq0$ from $\succ0$.

\begin{lemma}[Classical strict Hankel criterion]\label{lem:hankel-two-blocks}
Let $H_n=(h_{i+j})_{i,j=0}^{n-1}$ be a real Hankel matrix, $n\geq2$, and set
$H_n^+=(h_{i+j+1})_{i,j=0}^{n-2}$.
Then $H_n\in\TP$ if and only if $H_n\succ0$ and $H_n^+\succ0$.
\end{lemma}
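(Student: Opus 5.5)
The forward direction is immediate. If $H_n\in\TP$, then every leading principal minor of the symmetric matrix $H_n$ is positive, so Sylvester's criterion gives $H_n\succ0$. The matrix $H_n^+$ is the contiguous submatrix $H_n[\{1,\dots,n-1\};\{2,\dots,n\}]$. It is therefore also TP, and being a symmetric Hankel matrix, it is positive definite by Sylvester's criterion as well.

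For the converse I would use the contiguous-minor criterion already invoked in the paper: a matrix is strictly TP once all its contiguous square minors are positive. For a Hankel matrix, every contiguous $k\times k$ submatrix with row set $\{i,\dots,i+k-1\}$ and column set $\{j,\dots,j+k-1\}$ has entries $h_{m+p+q}$ with $m=i+j$. It depends only on $k$ and the shift $m$, so it equals $(h_{m+p+q})_{p,q=0}^{k-1}$. Such a block is a principal submatrix of $H_n$ when $m$ is even and of $H_n^+$ when $m$ is odd. Indeed, write $m=2r$ or $m=2r+1$ and take the index set $\{r,\dots,r+k-1\}$. One checks this set lies in the allowed range: the constraints $0\le i,j$ and $i+k-1,j+k-1\le n-1$ give $0\le m\le 2(n-k)$. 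For $m$ even this yields $r+k-1\le n-1$. For $m$ odd, $m\le 2(n-k)-1$ gives $r\le n-k-1$, so $r+k-1\le n-2$, which is the size limit of $H_n^+$.

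Hence every contiguous minor is a principal minor of $H_n$ or of $H_n^+$, and principal minors of positive definite matrices are positive. The contiguous-minor criterion then gives $H_n\in\TP$.

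The only delicate step is the index bookkeeping in the odd-shift case, checking that the block really fits inside $H_n^+$. The parity argument above handles it, since odd $m$ cannot reach $2(n-k)$.
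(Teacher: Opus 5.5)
Your proof is correct and follows the paper's route: Sylvester's criterion gives the forward direction. For the converse, you use the fact that each contiguous block of a Hankel matrix is a principal block of $H_n$ or $H_n^+$ according to the parity of the shift $i+j$, and then apply the contiguous-minor criterion. The paper states this in one line and shows the even/odd split only in the following $4\times4$ example, so your index bookkeeping (in particular that an odd shift forces $r+k-1\le n-2$) makes explicit what the paper leaves implicit.
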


\begin{proof}
The result follows immediately from Sylvester's criterion and the contiguous-minor criterion \cite{FallatJohnson2011, Pinkus2010}.
\end{proof}

\paragraph{Example illustrating Lemma~\ref{lem:hankel-two-blocks}.}
Let $h_k=1^k+2^k+3^k+4^k$ for $0\leq k\leq6$. Then
\[
(h_0,\ldots,h_6)=(4,10,30,100,354,1300,4890),
\]
and the two matrices in the lemma are
\[
H_4=\begin{pmatrix}
4&10&30&100\\
10&30&100&354\\
30&100&354&1300\\
100&354&1300&4890
\end{pmatrix},\qquad
H_4^+=\begin{pmatrix}
10&30&100\\30&100&354\\100&354&1300
\end{pmatrix}.
\]
Their leading principal minors, in increasing order of size, are
\[
\begin{array}{c|rrrr}
H_4&4&20&80&144\\
H_4^+&10&100&840&
\end{array}.
\]
Sylvester's criterion gives $H_4\succ0$ and $H_4^+\succ0$;
Lemma~\ref{lem:hankel-two-blocks} therefore gives $H_4\in\TP$.
To illustrate the contiguous-block step of the proof, the determinants of
its contiguous blocks of orders two and three are, respectively,
\[
\begin{pmatrix}
20&100&620\\100&620&4684\\620&4684&41060
\end{pmatrix},\qquad
\begin{pmatrix}80&840\\840&9936\end{pmatrix}.
\]
Here entry $(p,q)$ records the determinant of the block beginning at
row $p$ and column $q$, with indices starting at zero. Blocks with even
$p+q$ are principal blocks of $H_4$; those with odd $p+q$ are principal
blocks of $H_4^+$. The order-one minors are the positive entries of $H_4$,
and $\det H_4=144>0$, so every contiguous minor is positive.
Conversely, the resulting total positivity includes the positive leading
principal minors of both matrices used above. This example illustrates
strict TP; eternal TP requires the additional logarithmic test below.

\begin{theorem}[Finite strict Hankel criterion for ETP]
\label{thm:hankel-finite-ETP}
Let $H_n=(h_{i+j})_{i,j=0}^{n-1}$, $n\geq2$, be totally positive. Define
\[
\ell_k=\log h_k,\qquad
d_k=\ell_k-2\ell_{k+1}+\ell_{k+2}
=\log\frac{h_kh_{k+2}}{h_{k+1}^2}
\quad(0\leq k\leq2n-4),
\]
and
\begin{equation}\label{eq:hankel-difference-blocks}
D_0=(d_{i+j})_{i,j=0}^{n-2},\qquad
D_1=(d_{i+j+1})_{i,j=0}^{n-3}.
\end{equation}
For $n=2$, $D_1$ is empty and its semidefinite condition is omitted.
The following conditions are equivalent:
\begin{enumerate}[label=\textup{(\roman*)}]
\item $H_n\in\ETP$;
\item $\log^{\circ}H_n$ and $\log^{\circ}H_n^+$ are conditionally positive
semidefinite;
\item $D_0\succeq0$ and $D_1\succeq0$.
\end{enumerate}
\end{theorem}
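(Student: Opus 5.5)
The plan is to prove (i)$\iff$(ii) by combining Lemma~\ref{lem:hankel-two-blocks} with Lemma~\ref{lem:toeplitz-strict-ID}, and then (ii)$\iff$(iii) by a change of variables on the zero-sum subspace.

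\emph{Step 1: (i)$\iff$(ii).} For every $t>0$, $H_n^{\circ t}$ is again Hankel, with sequence $(h_k^t)$, and its translated block is $(H_n^+)^{\circ t}$. By Lemma~\ref{lem:hankel-two-blocks}, $H_n^{\circ t}\in\TP$ if and only if $H_n^{\circ t}\succ0$ and $(H_n^+)^{\circ t}\succ0$. Hence $H_n\in\ETP$ if and only if both $H_n$ and $H_n^+$ have all positive Hadamard powers positive definite.

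Both matrices are symmetric with positive entries. Their strict off-diagonal conditions $a_{ii}a_{jj}>a_{ij}^2$ are $2\times2$ minors of the TP matrix $H_n$. For $H_n^+$, the relevant minor is $\det\begin{pmatrix}h_{2i+1}&h_{i+j+1}\\h_{i+j+1}&h_{2j+1}\end{pmatrix}$, which is a minor of $H_n$ with rows $\{i,j\}$ and columns $\{i+1,j+1\}$; it is positive by total positivity. Lemma~\ref{lem:toeplitz-strict-ID}, (iii)$\iff$(i), then gives the equivalence with conditional positive semidefiniteness of $\log^{\circ}H_n$ and $\log^{\circ}H_n^+$. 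For $n=2$, $H_2^+$ is $1\times1$ and conditional positivity is vacuous, matching the omitted $D_1$.

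\emph{Step 2: (ii)$\iff$(iii).} This is the main computational point. Let $M=(\ell_{i+j})_{i,j=0}^{n-1}$. The map $\Delta:\R^{n-1}\to\R^n$, $(\Delta u)_i=u_{i-1}-u_i$ (with $u_{-1}=u_{n-1}=0$), is a bijection onto the zero-sum vectors. The adjoint $\Delta^{\mathsf T}$ applied to a sequence indexed by $i$ is a backward difference. I will compute $\Delta^{\mathsf T}M\Delta$ entrywise: the $(p,q)$ entry is
\[
\ell_{p+q}-\ell_{p+q+1}-\ell_{p+q+1}+\ell_{p+q+2}=d_{p+q},
\]
up to an index shift that I will fix by carrying out the computation carefully. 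Thus $u^{\mathsf T}D_0u=v^{\mathsf T}Mv$ for $v=\Delta u$, so conditional positive semidefiniteness of $\log^{\circ}H_n$ is exactly $D_0\succeq0$.

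The same computation for $\log^{\circ}H_n^+=(\ell_{i+j+1})_{i,j=0}^{n-2}$ gives the shifted differences $d_{p+q+1}$ for $0\le p,q\le n-3$, that is, $D_1$. The index ranges are consistent: $d_k$ is needed up to $k=2n-4$.

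\emph{Main obstacle.} The only delicate point is the boundary bookkeeping in Step 2. I need to confirm that the second difference of a Hankel symbol, sandwiched by first-difference operators on both sides, is exactly a Hankel matrix in $d$. Equivalently, this is the identity $(\Delta^{\mathsf T}M\Delta)_{pq}=\ell_{p+q}-2\ell_{p+q+1}+\ell_{p+q+2}$, obtained from the product-rule expansion $e_p-e_{p+1}$ applied on each side.
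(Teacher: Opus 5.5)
Your proposal is correct and follows the paper's proof essentially step for step. You apply Lemma~\ref{lem:hankel-two-blocks} to every Hadamard power, then Lemma~\ref{lem:toeplitz-strict-ID} with its $2\times2$ hypotheses read off as minors of $H_n$ (you make the $H_n^+$ case explicit via rows $\{i,j\}$ and columns $\{i+1,j+1\}$), and finally use the first-difference basis of the zero-sum subspace. Your map $\Delta$ is just $-E_n$ in the paper's notation, so no index shift actually occurs: $(\Delta^{\mathsf T}M\Delta)_{pq}=d_{p+q}$ exactly, as your own displayed formula already shows.
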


\begin{proof}
Apply Lemma~\ref{lem:hankel-two-blocks} to each positive Hadamard power.
Condition (i) is equivalent to positive definiteness of
$H_n^{\circ t}$ and $(H_n^+)^{\circ t}$ for every $t>0$.
The principal minors of order two of both original matrices are strictly
positive, because they are minors of $H_n$. Hence
Lemma~\ref{lem:toeplitz-strict-ID} proves equivalence with (ii).
If $n=2$, the second matrix is a positive scalar, so its conditional-positivity
condition is vacuous and its powers are positive.

To prove (ii)$\iff$(iii), for $m\geq2$ let $E_m\in\R^{m\times(m-1)}$
have columns $e_0-e_1,\ldots,e_{m-2}-e_{m-1}$, where the $e_i$ are
the standard basis vectors of $\R^m$. Each column has zero sum.
The columns are linearly independent and hence form a basis of the
$(m-1)$-dimensional zero-sum subspace. Thus every $v\in\R^m$ with
$\mathbf1^{\mathsf T}v=0$ can be written uniquely as $v=E_m u$, and
$v^{\mathsf T}Mv=u^{\mathsf T}(E_m^{\mathsf T}ME_m)u$.

A real symmetric $m\times m$ matrix $M$ is therefore conditionally
positive semidefinite if and only if $E_m^{\mathsf T}ME_m\succeq0$.
For $0\leq i,j\leq n-2$, the Hankel entries give
\begin{align*}
\bigl(E_n^{\mathsf T}(\log^{\circ}H_n)E_n\bigr)_{ij}
&=(e_i-e_{i+1})^{\mathsf T}(\log^{\circ}H_n)(e_j-e_{j+1})\\
&=\ell_{i+j}-\ell_{i+j+1}-\ell_{i+j+1}+\ell_{i+j+2}\\
&=d_{i+j}.
\end{align*}
Similarly, for $n\geq3$ and $0\leq i,j\leq n-3$,
\[\bigl(E_{n-1}^{\mathsf T}(\log^{\circ}H_n^+)E_{n-1}\bigr)_{ij}
=\ell_{i+j+1}-2\ell_{i+j+2}+\ell_{i+j+3}
=d_{i+j+1}.\]
Consequently,
$E_n^{\mathsf T}(\log^{\circ}H_n)E_n=D_0$ and
$E_{n-1}^{\mathsf T}(\log^{\circ}H_n^+)E_{n-1}=D_1, (n\geq3)$.
For $n=2$, only the first identity is needed, as noted above.
This proves (ii)$\iff$(iii). The matrices in (iii) may be singular: strict positivity
of the powers is supplied by the strict order-two minors in the hypothesis,
as in Lemma~\ref{lem:toeplitz-strict-ID}.
\end{proof}

\begin{remark}[Finite data versus an infinite moment sequence]
Theorem~\ref{thm:hankel-finite-ETP} tests only $h_0,\ldots,h_{2n-2}$ and
requires no extension to an infinite infinitely divisible Stieltjes moment
sequence. We do not assert such an extension. Its finite-difference step is
precisely the classical conditional-positivity mechanism discussed in
\cite{BenhidaCurtoExner2021}. The role of the present formulation is to make
that mechanism an explicit test for strict ETP of a finite TP Hankel matrix.
\end{remark}

\begin{corollary}[Finite polynomial test in logarithmic coordinates]
Under the hypotheses of Theorem~\ref{thm:hankel-finite-ETP}, ETP is equivalent
to the nonnegativity of every principal minor of $D_0$ and $D_1$.
These are finitely many polynomial inequalities in the variables
$\ell_0,\ldots,\ell_{2n-2}$, or equivalently in the $d_k$.
\end{corollary}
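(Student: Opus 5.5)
The statement to prove is the last corollary: under the hypotheses of Theorem~\ref{thm:hankel-finite-ETP}, ETP is equivalent to nonnegativity of every principal minor of $D_0$ and $D_1$. The plan is to combine Theorem~\ref{thm:hankel-finite-ETP}, direction (i)$\iff$(iii), with the standard criterion for positive semidefiniteness. That criterion says a real symmetric matrix is positive semidefinite if and only if all of its principal minors (not only the leading ones) are nonnegative.

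The argument runs in three steps.

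\begin{enumerate}[label=\textup{(\arabic*)}]
\item Invoke Theorem~\ref{thm:hankel-finite-ETP}. Since $H_n$ is totally positive, $H_n\in\ETP$ holds if and only if $D_0\succeq0$ and $D_1\succeq0$. For $n=2$, $D_1$ is empty and the condition on it is vacuous.

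\item Note that $D_0$ and $D_1$ are real symmetric, because they are Hankel matrices built from the real numbers $d_k$. Apply the all-principal-minors criterion to each.

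For completeness I would sketch why this criterion holds, since it is where care is needed. The leading-minor version fails in the semidefinite case; for example, $\operatorname{diag}(0,-1)$ has nonnegative leading minors but is not positive semidefinite.

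Necessity: each principal submatrix of a positive semidefinite matrix is positive semidefinite, so its determinant, being the product of its eigenvalues, is nonnegative.

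Sufficiency: the characteristic polynomial satisfies
\[
\det(\lambda I+M)=\sum_k E_k(M)\lambda^{m-k},
\]
where $E_k(M)$ is the sum of the principal minors of order $k$. If all $E_k(M)\geq0$, this polynomial has no root with $\lambda>0$. Hence $M$ has no negative eigenvalue.

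\item Observe that each entry $d_k=\ell_k-2\ell_{k+1}+\ell_{k+2}$ is linear in the $\ell_j$. Every principal minor is a polynomial in the entries, so it is a polynomial in $\ell_0,\ldots,\ell_{2n-2}$, and also in the $d_k$. There are finitely many such minors: $2^{n-1}-1$ from $D_0$ and $2^{n-2}-1$ from $D_1$. This gives the finite polynomial description.
\end{enumerate}

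No step is a real obstacle. The only point needing attention is in step (2): one must use all principal minors rather than Sylvester's leading-minor test, because $D_0$ and $D_1$ may be singular, as in Example~\ref{ex:toeplitz-gaussian}.
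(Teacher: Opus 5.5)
Your proposal is correct and is essentially the paper's proof: it combines (i)$\iff$(iii) of Theorem~\ref{thm:hankel-finite-ETP} with the all-principal-minors criterion for positive semidefiniteness and with the linearity of the $d_k$ in the $\ell_j$, and your sketch of that criterion via $\det(\lambda I+M)=\sum_k E_k(M)\lambda^{m-k}$ is sound. One small slip: the singular case you want to cite is the Hankel family $h_k=\exp(u+vk+ck^2)$ with $d_k=2c$, not the Toeplitz Example~\ref{ex:toeplitz-gaussian}, for which $D_0$ and $D_1$ are not defined.
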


\begin{proof}
Use the principal-minor characterization of positive semidefiniteness:
all principal minors must be tested, not only the leading principal minors.
The entries of $D_0,D_1$ are linear in the logarithmic variables.
\end{proof}

\begin{example}[A singular difference matrix with strict ETP]
Let $h_k=\exp(u+vk+ck^2)$, where $u,v\in\R$ and $c>0$.
Then $d_k=2c$, so both matrices in \eqref{eq:hankel-difference-blocks}
are positive semidefinite of rank one when nonempty. Moreover,
$h_{i+j}=e^u e^{vi+ci^2}e^{2cij}e^{vj+cj^2}$.

The generalized Vandermonde property makes $(e^{2cij})$ strictly totally
positive, and every positive Hadamard power has the same form. Thus these
Hankel matrices are ETP in every order despite the singular difference
matrices. This is the familiar quadratic-logarithm family associated with
lognormal moments; it is an illustration of the classical theory.
\end{example}

The contrast with Toeplitz is structural. For Hankel, the translated block
is symmetric and the two semidefinite tests settle the finite ETP question.
For symmetric Toeplitz, the translated block is generally nonsymmetric;
Example~\ref{ex:toeplitz5-counterexample} shows that even positivity of all
principal minors of every power does not control that block.

\section{Conclusion}

The Hadamard threshold contains information that a universal critical exponent
cannot detect.  For $3\times3$ matrices, we obtain an exact logarithmic
criterion, explicit bounds, all possible threshold values, and continuity on
the normalized totally positive cone.  For symmetric $4\times4$ matrices, the
logarithmic core separates an easily controlled case from the obstructed one,
and a two-parameter Hankel family realizes every possible threshold.  The
exact ranges are therefore $[0,1)$ in order three and
$[0,1)\cup(1,2)$ in the symmetric order-four case.

The symmetric Toeplitz family also admits an explicit order-four ETP
classification. In order five, infinite divisibility and positivity near the
zero exponent do not control all nonprincipal minors. A finite critical-point
test gives an exact analytic criterion; a classification by explicit
parameter inequalities remains open. For finite TP Hankel matrices, the
classical infinite-divisibility machinery gives two semidefinite tests on
second logarithmic differences, with no remaining nonprincipal obstructions.

\end{document}